\tikzset{cross/.style={cross out, draw=black, minimum size=2*(#1-\pgflinewidth), inner sep=0pt, outer sep=0pt},
cross/.default={1pt}}
\tikzset{->-/.style={decoration={
  markings,
  mark=at position #1 with {\arrow[scale=1.5]{>}}},postaction={decorate}}}
\tikzset{-<-/.style={decoration={
  markings,
  mark=at position #1 with {\arrow[scale=1.5]{<}}},postaction={decorate}}}
\tikzset{midstealth/.style={decoration={
  markings,
  mark=at position #1 with {\arrow{stealth}}},postaction={decorate}}}
\newtheoremstyle{break}
  {}%
  {}%
  {\itshape}
  {}%
  {\bfseries}
  {.}%
  {\newline}%
  {}%
\theoremstyle{plain}
\newtheorem{thm}{Theorem}[section]
\newtheorem{lem}[thm]{Lemma}
\newtheorem{prop}[thm]{Proposition}
\newtheorem{cor}[thm]{Corollary}
\newtheorem{exa}[thm]{Example}
\newtheorem{defn}[thm]{Definition}
\newtheorem{rem}[thm]{Remark}
\theoremstyle{break}
\DeclareMathOperator{\rank}{rank}
\DeclareMathOperator{\vol}{vol}
\DeclareMathOperator{\im}{Im}
\DeclareMathOperator{\Ker}{Ker}
\DeclareMathOperator{\Hom}{Hom}
\DeclareMathOperator{\Ext}{Ext}
\DeclareMathOperator{\Homo}{H}
\DeclareMathOperator{\Int}{Int}
\DeclareMathOperator{\sol}{Sol}
\DeclareMathOperator{\diag}{diag}
\DeclareMathOperator{\DR}{DR}
\DeclareMathOperator{\FL}{FL}
\DeclareMathOperator{\id}{id}
\DeclareMathOperator{\cone}{cone}
\newcommand{\barsigma}{\overline{\sigma}}
\newcommand{\R}{\mathbb{R}}
\newcommand{\C}{\mathbb{C}}
\newcommand{\DD}{\mathcal{D}}
\newcommand{\LL}{\mathbb{L}}
\newcommand{\A}{\mathbb{A}}
\newcommand{\Q}{\mathbb{Q}}
\newcommand{\Z}{\mathbb{Z}}
\newcommand{\T}{\mathbb{T}}
\newcommand{\lef }{\left\{ }
\newcommand{\righ }{\right\} }
\newcommand{\Gm}{\mathbb{G}_m}
\newcommand{\bartau}{\bar{\tau}}
\newcommand{\bm}{{\bf m}}
\newcommand{\ii}{\sqrt{-1}}
\newcommand{\s}{\sigma}
\newcommand{\bt}{\bar{\tau}}
\newcommand{\bs}{\barsigma}
\newcommand{\ts}{\tilde{\sigma}}
\newcommand{\cA}{\mathring{A}}
\newcommand{\ca}{\mathring{\bf a}}
\newcommand*{\transp}[2][-3mu]{\ensuremath{\mskip1mu\prescript{\smash{\mathrm t\mkern#1}}{}{\mathstrut#2}}}
\crefname{thm}{Theorem}{Theorems}
\crefname{bthm}{Theorem}{Theorems}
\crefname{bprop}{Proposition}{Propositions}
\crefname{blem}{Lemma}{Lemmata}
\crefname{prop}{Proposition}{Propositions}
\crefname{lem}{Lemma}{Lemmata}
\crefname{bcor}{Corollary}{Corollary}
\crefname{cor}{Corollary}{Corollary}
\title{Laplace, Residue, and Euler integral representations of GKZ hypergeometric functions }
\author{Saiei-Jaeyeong Matsubara-Heo\footnote{Graduate School of Mathematical Sciences, The University of Tokyo, 3-8-1 Komaba, Meguro, Tokyo, 153-8914 Japan.\newline e-mail: \texttt{saiei@ms.u-tokyo.ac.jp}}}
\begin{document}

\date{}
\maketitle

\begin{abstract}      
We consider four types of representations of solutions of GKZ system: series representations, Laplace integral representations, Euler integral representations, and Residue integral representations which will be introduced in this paper. In the former half of this paper, we provide a method for constructing integration cycles for Laplace, Residue, and Euler integral representations and relate them to series representations. In the latter half, we reformulate our integral representations in terms of direct images of $D$-modules and show their equivalence.
\end{abstract}

\begin{section}{Introduction}

\indent

 It is widely recognised that classical Gauss hypergeometric function 
\begin{equation}\label{series}
{}_2F_1(\alpha,\beta,\gamma ;z)=\displaystyle\sum_{n=0}^\infty\frac{(\alpha)_n(\beta)_n}{(\gamma)_n(1)_n}z^n
\end{equation}
has been a central object in the theory of special functions. On the other hand, ${}_2F_1(\alpha,\beta,\gamma ;z)$ has the so-called Euler integral representation which enables one to perform an analytic continuation:
\begin{equation}\label{Euler}
{}_2F_1(\alpha, \beta, \gamma ; z)=\frac{\Gamma(\gamma)}{\Gamma(\gamma -\alpha)\Gamma(\alpha)}\int^1_0t^{\alpha -1}(1-t)^{\gamma -\alpha -1}(1-zt)^{-\beta}dt\;\; (|z|<1).
\end{equation}
Precisely speaking, we need some conditions of the parameters $\alpha,\; \beta,\; \gamma$ so that the integral (\ref{Euler}) is convergent, but we do not discuss it here. However, by the definition of $\Gamma$ function, this integral can also be transformed into Laplace type integral representation

\begin{align}
 &{}_2F_1(\alpha, \beta, \gamma ; z)\nonumber\\
 =&\frac{\pi\Gamma(\gamma)}{\sin\pi(\gamma-\alpha)\Gamma(\alpha)\Gamma(\beta)}\int^1_0\int_0^\infty\int_0^\infty e^{ -(1-t_1)t_2-(1-zt_1)t_3} t_1^{\alpha-1}t_2^{\alpha-\gamma}t_3^{\beta-1}dt_1dt_2dt_3\label{GaussLaplace}
\end{align}
as long as the inequality $|z|<1$ holds. This trick is known as \lq\lq{}Cayley trick\rq\rq{} in the literature. While the integration contour of (\ref{GaussLaplace}) is unbounded, one can also see that the Euler integral (\ref{Euler}) can also be rewritten as
\begin{align}\label{GaussResidue}
  &{}_2F_1(\alpha, \beta, \gamma ; z)\nonumber\\
 =&\frac{\Gamma(\gamma)}{\Gamma(\gamma-\alpha)(2\pi\sqrt{-1})^2}\int_0^1\oint_{L_2}\oint_{L_1}\frac{t^{\alpha-1}y_1^{\alpha-\gamma}y_2^{\beta-1}}{\Big(1-y_1(1-t)\Big)\Big( 1-y_2(1-zt)\Big)}dy_1dy_2dt,
\end{align}
where $L_1$ and $L_2$ are small circles around $\frac{1}{y_1}=1-t$ and $\frac{1}{y_2}=1-zt$ respectively. Note that the integration contour of (\ref{GaussResidue}) remains bounded. We shall call the integral representation (\ref{GaussResidue}) the Residue integral representation in this paper.

The observation above shows that Gauss hypergeometric function has four different representations: series representation (\ref{series}), Euler integral representation (\ref{Euler}), Laplace integral representation (\ref{GaussLaplace}), and Residue integral representation (\ref{GaussResidue}). In this paper, we are going to establish the relation among these points of views for GKZ hypergeometric functions. Let us first briefly  overview the state of research.

GKZ hypergeometric system $M_A(c)$, which is of our central interest in this paper, was introduced by I.\  M.\  Gelfand and his coworkers as a generalisation of series representation of ${}_2F_1(\alpha,\beta,\gamma;z).$ $M_A(c)$ includes classically important special functions as particular cases. For example, Gauss, Kummer, Bessel, Hermite-Weber, Airy functions, Appell-Lauricella series, or Horn series (\cite{E}) can naturally be grasped in terms of $M_A(c)$. Let us revise the definition of GKZ hypergeometric system: Let $n<N$ be positive integers, $c\in\mathbb{C}^{n\times 1}$ be a fixed parameter, and let $\{ {\bf a}(1),\dots,{\bf a}(N)\}\subset\mathbb{Z}^{N\times 1}$ be lattice points. We set $A=({\bf a}(1)\mid \cdots\mid {\bf a}(N))=(a_{ij}).$ Throughout this paper, we assume that $\Z A=\displaystyle\sum_{j=1}^N\Z {\bf a}(j)=\Z^{n\times 1}.$
The GKZ hypergeometric system is given by the family of equations 

\begin{subnumcases}{M_A(c):}
E_i\cdot f(z)=0 &($i=1,\cdots, n$)\label{EulerEq}\\
\Box_u\cdot f(z)\hspace{-0.8mm}=0& ($u\in L_A=\Ker_{\mathbb{Z}}A$),\label{ultrahyperbolic}
\end{subnumcases}
where $E_i$ and $\Box_u$ are differential operators defined by 

\begin{equation}
E_i=\sum_{j=1}^Na_{ij}z_j\frac{\partial}{\partial z_j}+c_i,\;\;
\Box_u=\prod_{u_j>0}\left(\frac{\partial}{\partial z_j}\right)^{u_j}-\prod_{u_j<0}\left(\frac{\partial}{\partial z_j}\right)^{-u_j}.
\end{equation}
This system is, without any restriction on the parameter $c$ and $A$, a holonomic system (\cite{A}), and thus, has finitely many linearly independent solutions. An important observation is that, the so-called $\Gamma-$series
\begin{equation}\label{gammaseries}
\varphi_v(z)=\displaystyle\sum_{u\in {\rm L}_A}\frac{z^{u+v}}{\Gamma({\bf 1}+u+v)},
\end{equation}
where $v\in\C^N$ satisfies equations (\ref{EulerEq}) and (\ref{ultrahyperbolic}) when $Av=-c$, and that one can construct a basis of holomorphic solutions of $M_A(c)$ consisting only of $\Gamma$-series $\varphi_v(z)$ with the aid of the so-called regular triangulation of the Newton polytope $\Delta_A\overset{\rm def}{=}c.h.\{0,{\bf a}(1),\dots,{\bf a}(N)\}$ (see \cite{GGR} and \cite{FF}). Thus, the viewpoint of series representation (\ref{series}) of ${}_2F_1(\alpha,\beta,\gamma;z)$ was successfully generalised to that of GKZ hypergeometric functions (\ref{gammaseries}).

Concerning a generalisation of Euler integral representation (\ref{Euler}), it was shown in \cite{GKZ} that for any given $k$ Laurent polynomials
$h_{l,z^{(l)}}(x)=\displaystyle\sum_{j=1}^{N_l}z_j^{(l)}x^{{\bf a}^{(l)}(j)}\;\;(l=1,\dots,k),$
where $z_j^{(l)}$ is regarded as a variable, and for any parameters $\gamma_l\in\C\;\;(l=1,\dots k)$ and $c\in\C^{n},$ the Euler integral representation
\begin{equation}\label{EulerInt}
f(z)=\int_\Gamma h_{1,z^{(1)}}(x)^{-\gamma_1}\cdots h_{k,z^{(k)}}(x)^{-\gamma_k}x^{c-1} dx
\end{equation}
satisfies a GKZ system $M_A(d)$ for suitable matrix $A$ and parameters $d=\begin{pmatrix}
\gamma\\
c
\end{pmatrix}
\in\C^{n+k}.$ Note that any regular holonomic GKZ system has such an integral representation for suitable Laurent polynomials $h_{l,z^{(l)}}(x)$. In the case when each Laurent polynomial $h_{l,z^{(l)}}(x)$ is a linear polynomial, it is possible to construct a basis of cycles of (\ref{EulerInt}) based on the geometry of the space $\T^n\setminus\{ h_{1,z^{(1)}}(x)\cdots h_{k,z^{(k)}}(x)=0\}$ as was fully explored by K. Aomoto and others (see e.g. \cite{AK} or \cite{OT}). They showed that if all $z_j^{(l)}$ are real, the space of cycles can be computed by means of combinatorics of hyperplane arrangements. However, one has to know complex cycles in general case and it is still a challenging problem to construct a basis of cycles $\Gamma$ for (\ref{EulerInt}). Moreover, the relation between (\ref{gammaseries}) and (\ref{EulerInt}) remains unclear.

Regarding a generalisation of Laplace integral (\ref{GaussLaplace}), several authors developed a systematic study of Laplace integral representations based on Cayley trick (\cite{A}, \cite{ET}, \cite{H}, \cite{SW}). In particular, under certain genericity assumption (non-resonance) of the parameter $c$, a canonical isomorphism between $M_A(c)$ and a certain direct image of a $D$-module was established in \cite{SW}. This result suggests that, for a  generic parameter $c$, a general solution $f(z)$ of $M_A(c)$ is given by the Laplace integral

\begin{equation}\label{Laplace integral}
f(z)=\int_\Gamma \exp\left\{\sum_{j=1}^Nz_jx^{{\bf a}(j)}\right\}x^{\bf c-1}dx,
\end{equation}
where $\Gamma$ is a suitable (unbounded) cycle. Thus, the viewpoint of Laplace integral representation (\ref{Laplace}) was generalised to that of GKZ hypergeometric functions. 

It is noteworthy that there is a progress concerning the construction of integration cycles $\Gamma$ of (\ref{Laplace integral}). In \S 5 of \cite{ET}, it was examined that under the assumption $0\in\Int\Delta_A$, one can employ a general procedure of constructing Lefschetz thimbles (method of steepest descent) to construct a basis of cycles $\Gamma$ of (\ref{Laplace integral}). As a byproduct, they obtained the leading term of the asymptotic expansion of (\ref{Laplace integral}) when $f(z)$ is restricted to a complex line passing through the origin (stationary phase approximation). However, their assumption is too restrictive since, for example, any regular holonomic GKZ system $M_A(c)$ never satisfies the assumption $0\in\Int\Delta_A$. Moreover, it is not easy to relate these steepest descent contours to series representations (\ref{gammaseries}).

As for a generalisation of Residue integral (\ref{GaussResidue}), the following integral was treated in \cite{B}:
\begin{equation}\label{ResidueInt}
f(z)=\int_\Gamma\frac{y^{\gamma -1}x^{c-1}}{(1-y_1h_{1,z^{(1)}}(x))\cdots (1-y_kh_{k,z^{(k)}}(x))}dydx.
\end{equation}
In \cite{B}, the integral (\ref{ResidueInt}) is called ``Euler integral''. The author indicated how to construct the integration contour $\Gamma$ as a generalisation of Pochhammer cycle when $k=1$, but did not investigate its relation to (\ref{gammaseries}). Moreover, the relation between (\ref{EulerInt}) and (\ref{ResidueInt}) is unclear. 

The purpose of this paper is two-folds: (I) to construct integration cycles explicitly for (\ref{EulerInt}), (\ref{Laplace integral}), and (\ref{ResidueInt}) and to relate them to series representations (\ref{gammaseries}), and (I\hspace{-.1em}I) to show the equivalence of three integral representations (\ref{EulerInt}), (\ref{Laplace integral}), and (\ref{ResidueInt}).

In the former half of this paper, we focus on (I). Our strategy is as follows. First, we take a suitable covering change of coordinate of the torus $\T^n$ associated to a regular triangulation $T$. Then, in this new coordinate, we construct a standard integration cycle $\Gamma$, as a product of Hankel contour and a multidimensional Pochhammer cycle for (\ref{Laplace integral}), and as a product of Pochhammer cycles for (\ref{EulerInt}) and  (\ref{ResidueInt}).  Finally, we consider suitable deck transformations of this cycle to obtain a basis of cycles. Though the construction is almost straightforward, we will find that the transformation matrix between cycles of (\ref{EulerInt}), (\ref{Laplace integral}), or (\ref{ResidueInt}), and a basis consisting of $\Gamma-$series (\ref{gammaseries}) is given in terms of the character matrix of a finite Abelian group associated to the regular triangulation $T$. These results correspond to \cref{thm:fundamentalthm1},  \cref{thm:fundamentalthm2} and \cref{thm:fundamentalthm3}.

In the latter half of this paper, we switch our attention to problem (I\hspace{-.1em}I). We consider the following integral:
\begin{equation}
\int_\Gamma h_{1,z^{(1)}}(x)^{-\gamma_1}\cdots h_{k,z^{(k)}}(x)^{-\gamma_k}x^{c-1} e^{h_{0,z^{(0)}}(x)}dx,\label{KummerInt}
\end{equation}
where each $h_{l,z^{(l)}}(x)\;\;(l=0,\dots,k)$ is again a Laurent polynomial. This can be seen as an interpolation of Euler integral representations (\ref{EulerInt}) and Laplace integral representations (\ref{Laplace integral}). We also consider an interpolation of Laplace integral representations (\ref{Laplace integral}) and Residue integral representations (\ref{ResidueInt}):
\begin{equation}\label{MixedIntegral}
\int_\Gamma\frac{e^{h_{0}(x)}y^{\gamma -1}x^{c-1}}{(1-y_1h_{1}(x))\cdots (1-y_kh_{k}(x))}dydx.
\end{equation}
It can be confirmed that such integrals satisfy GKZ system with a suitable matrix $A$ and a parameter 
$d=\begin{pmatrix}
\gamma\\
c
\end{pmatrix}$. 
Then, (I\hspace{-.1em}I) follows once we formulate and prove the equivalence of three integral representations (\ref{Laplace integral}), (\ref{KummerInt}), and (\ref{MixedIntegral}). At this point, we can formulate the equivalence as canonical isomorphisms of suitable direct images of $D$-modules corresponding to three integral representations (\cref{thm:CayleyTrick} and \cref{thm:ComposedResidue}). It is worth pointing out that we can also obtain an isomorphism between the local system of cycles for (\ref{ResidueInt}) and the solution sheaf of the corresponding GKZ system (\cref{thm:ResidueIntegral}). We will conclude this paper with a general construction of cycles for (\ref{KummerInt}) and  (\ref{MixedIntegral}) and its relation to series representation (\ref{gammaseries}) in terms of a character matrix of a finite Abelian group associated to a regular triangulation which unifies the treatment of \S\ref{SectionLaplace}, \S\ref{SectionResidue} and \S\ref{SectionEuler} (\cref{thm:fundamentalthm4} and \cref{thm:fundamentalthm5}).

The author was stimulated by the computations in \cite{B} and \cite{GG}. The author would like to thank Prof.\  Y.\  Goto and Prof.\  N.\  Takayama for intriguing discussions, thank Prof.\  Y.\   Nozaki and Prof.\  S.\  Wakatsuki for daily discussion, and thank Prof.\  H.\  Sakai for his constant encouragement during the preparation of this paper. 

\underline{General Notation}

For
$v=
\begin{pmatrix}
v_1\\
\vdots\\
v_n\\
\end{pmatrix}
\in\C^n$,
 $A=({\bf a}(1)|\cdots|{\bf a}(N))\in M(n,N;\mathbb{Q})$, $x=(x_1,\cdots,x_n),\; y=(y_1,\dots,y_n)\in\C^n,$ and a complex valued univariate function $F(t)$, we put

\begin{equation*}
|v|=v_1+\cdots +v_n,
\end{equation*}

\begin{equation*}
xy=(x_1y_1,\dots,x_ny_n),
\end{equation*}

\begin{equation*}
x^v=x_1^{v_1}\cdots x_n^{v_n},
\end{equation*}

\begin{equation*}
x^A=(x^{{\bf a}(1)},\cdots,x^{{\bf a}(N)}),
\end{equation*}

\begin{equation*}
F(v)=F(v_1)\times\cdots\times F(v_n).
\end{equation*}

\noindent For example, we write $\Gamma\left(
\begin{pmatrix}
v_1\\
v_2\\
\end{pmatrix}
\right)
=\Gamma(v_1)\Gamma(v_2).$

For any subset $\s$ of $\{ 1,\dots, N\}$, we denote $A_\s$ the matrix consisting of column vectors $\{ {\bf a}(i)\}_{i\in\s}$. We often identify $\s$ with the corresponding set of column vectors $\{ {\bf a}(i)\}_{i\in\s}$. We also denote $\bs$ the complement $\{1,\dots,N\}\setminus\s$. 

For any natural numbers $m,n$, we denote $\Z^{m\times n}$ the set of $m\times n$ matrices with entries in $\Z$. More generally, for any finite sets $\s$ and $\tau$, we denote $\Z^{\s\times\tau}$ the set of matrices with their rows indexed by $\s$ and with their columns indexed by $\tau$. Especially, if $|\s|=n$ and $A\in\Z^{n\times \s}$, we often regard ${}^tA,A^{-1}\in\Z^{\s\times n}.$

The complex torus $\C^\times$ is written in two ways. We use $\Gm$ if it is equipped with Zariski topology, while we use $\T$ if it is equipped with the usual topology.

\vspace{2em}
\underline{Assumption}
Throughout this paper, we assume that the $A$ matrix of a GKZ system $M_A(c)$ satisfies $\Z A=\Z^{n\times 1}.$ 

\end{section}

\begin{section}{Review on $\Gamma$-series (after M.-C. Fern\'andez-Fern\'andez)}\label{sectionseries}
In this section, we review some basic facts on $\Gamma$-series solutions of GKZ hypergeometric functions. Basic references are \cite{FF} and \cite{SST}. First of all recall the following standard notion. Let us consider an Euclidean space $Y=\C^l$ and consider a subset $\tau\subset\{1,\dots,l\}$ with cardinality $r$. We will denote $z_{\tau}=(z_i)_{i\in\tau}$ and $\bar{\tau}=\{1,\cdots,l\}\setminus\tau$. We identify $\C^r$ with the subspace   $Y_{\bartau}=\{z\in\C^l\mid z_j=0\; (\forall j\in\tau)\}.$ Take a point $p\in Y_{\bartau}.$
\begin{defn}
A formal series 
\begin{equation}
\displaystyle\sum_{{\bf m}\in\Z_{\geq 0}^{\bar{\tau}}}f_{\bf m}(z_\tau)z_{\bar{\tau}}^{\bf m}\in\C\{ z_\tau-p\}[\![z_{\bar{\tau}}]\!]
\end{equation}
is said to be of Gevrey multi-order ${\bf s}=(s_j)_{j\in\bar{\tau}}\in\R^{\bar{\tau}}$ along $Y_{\bartau}$ at $p\in Y_{\bartau}$ if one has
\begin{equation}
\displaystyle\sum_{{\bf m}\in\Z_{\geq 0}^{\bar{\tau}}}\frac{f_{\bf m}(z_\tau)}{({\bf m}!)^{\bf s-1}}z_{\bar{\tau}}^{\bf m}\in\C\{ z_\tau-p,z_{\bar{\tau}}\}.
\end{equation}
\end{defn}

Let us consider Gevrey series solutions of GKZ system. For any vector $v\in\C^{N\times 1}$ such that $Av=-c,$ we put
\begin{equation}
\varphi_v(z)=\displaystyle\sum_{u\in L_A}\frac{z^{u+v}}{\Gamma(1+u+v)}
\end{equation}
and call it a $\Gamma$-series. It can readily be seen that $\varphi_{v}(z)$ is a formal solution of $M_A(c)$ (\cite{SST}). For any subset $\tau\subset\{1,\dots,N\}$, we denote $A_\tau$ the matrix given by the columns of $A$ indexed by $\tau.$ In the following, we take $\sigma\subset\{1,\dots,N\}$ such that $|\sigma|=n$ and $\det A_\sigma\neq 0.$
Taking a vector ${\bf k}\in\Z^{\bar{\sigma}},$ we put
\begin{equation}
v_\sigma^{\bf k}=
\begin{pmatrix}
-A_{\sigma}^{-1}(c+A_{\bar{\sigma}}{\bf k})\\
{\bf k}
\end{pmatrix}.
\end{equation}

\noindent
Then, by a direct computation, we have
\begin{equation}\label{seriesphi}
\varphi_{\s,{\bf k}}(z)\overset{\rm def}{=}\varphi_{v_\sigma^{\bf k}}(z)=z_\sigma^{-A_\sigma^{-1}{\bf c}}
\sum_{{\bf k+m}\in\Lambda_{\bf k}}\frac{(z_\sigma^{-A_\sigma^{-1}A_{\bar{\sigma}}}z_{\bar{\sigma}})^{\bf k+m}}{\Gamma({\bf 1}_\sigma-A_\sigma^{-1}({\bf c}+A_{\bar{\sigma}}({\bf k+m}))){\bf (k+m)!}},
\end{equation}
where $\Lambda_{\bf k}$ is given by
\begin{equation}\label{lambdak}
\Lambda_{\bf k}=\Big\{{\bf k+m}\in\Z^{\bar{\sigma}}_{\geq 0}\mid A_{\bar{\sigma}}{\bf m}\in\Z A_\sigma\Big\}.
\end{equation}
The following lemmata are very easily confirmed (\cite{FF}).

\begin{lem}
For any ${\bf k},{\bf k^\prime}\in\Z^{\bar{\sigma}}$, the following statements are equivalent

\begin{enumerate}
\item $v^{\bf k}-v^{\bf k^\prime}\in\Z^{N\times 1}$\\
\item $[A_{\barsigma} {\bf k}]=[A_{\barsigma} {\bf k^\prime}]$ in $\Z^{n\times 1}/\Z A_\sigma$\\
\item $\Lambda_{\bf k}=\Lambda_{\bf k^\prime}.$
\end{enumerate}

\end{lem}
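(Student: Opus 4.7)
The plan is to verify (1) $\Leftrightarrow$ (2) by direct computation and then treat (2) $\Leftrightarrow$ (3) using the defining property of $\Lambda_{\bf k}$, with the only subtle point being that $\Lambda_{\bf k}$ is nonempty for every ${\bf k}$.

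For (1) $\Leftrightarrow$ (2), I would just subtract: the $\bar{\sigma}$-component of $v^{\bf k}-v^{{\bf k}^\prime}$ is ${\bf k}-{\bf k}^\prime$, which already lies in $\Z^{\bar{\sigma}}$, while the $\sigma$-component is $-A_\sigma^{-1}A_{\bar{\sigma}}({\bf k}-{\bf k}^\prime)$. Thus $v^{\bf k}-v^{{\bf k}^\prime}\in\Z^{N\times 1}$ iff $A_\sigma^{-1}A_{\bar{\sigma}}({\bf k}-{\bf k}^\prime)\in\Z^{\sigma}$, which is the same as $A_{\bar{\sigma}}({\bf k}-{\bf k}^\prime)\in A_\sigma\Z^\sigma=\Z A_\sigma$, i.e.\ $[A_{\bar{\sigma}}{\bf k}]=[A_{\bar{\sigma}}{\bf k}^\prime]$ in $\Z^{n\times 1}/\Z A_\sigma$.

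For (2) $\Rightarrow$ (3), I would show $\Lambda_{\bf k}\subset\Lambda_{{\bf k}^\prime}$ (the reverse inclusion is symmetric). If ${\bf k}+{\bf m}\in\Lambda_{\bf k}$, rewrite ${\bf k}+{\bf m}={\bf k}^\prime+{\bf m}^{\prime\prime}$ with ${\bf m}^{\prime\prime}={\bf m}+{\bf k}-{\bf k}^\prime$; then
\[
A_{\bar{\sigma}}{\bf m}^{\prime\prime}=A_{\bar{\sigma}}{\bf m}+A_{\bar{\sigma}}({\bf k}-{\bf k}^\prime)\in\Z A_\sigma+\Z A_\sigma=\Z A_\sigma,
\]
using (2) and the membership ${\bf k}+{\bf m}\in\Lambda_{\bf k}$. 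Positivity of ${\bf k}^\prime+{\bf m}^{\prime\prime}={\bf k}+{\bf m}$ is automatic, so ${\bf k}+{\bf m}\in\Lambda_{{\bf k}^\prime}$.

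For (3) $\Rightarrow$ (2), I first need $\Lambda_{\bf k}\neq\emptyset$. Because $A_\sigma$ is invertible and $\Z A=\Z^{n\times 1}$, the index $d=[\Z^{n\times 1}:\Z A_\sigma]$ is finite, so any ${\bf m}\in d\,\Z^{\bar{\sigma}}$ satisfies $A_{\bar{\sigma}}{\bf m}\in d\,\Z^{n\times 1}\subset\Z A_\sigma$. Choosing ${\bf m}=dM\cdot\mathbf{1}$ with $M$ large enough to dominate the negative entries of ${\bf k}$ produces an element ${\bf k}+{\bf m}\in\Lambda_{\bf k}$. Then by (3), ${\bf k}+{\bf m}\in\Lambda_{{\bf k}^\prime}$, so ${\bf k}+{\bf m}={\bf k}^\prime+{\bf m}^{\prime\prime}$ with $A_{\bar{\sigma}}{\bf m}^{\prime\prime}\in\Z A_\sigma$. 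Subtracting gives
\[
A_{\bar{\sigma}}({\bf k}-{\bf k}^\prime)=A_{\bar{\sigma}}({\bf m}^{\prime\prime}-{\bf m})\in\Z A_\sigma,
\]
which is exactly (2). The only mildly delicate step is this nonemptiness argument; everything else is a one-line rearrangement.
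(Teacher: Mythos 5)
Your proof is correct. The paper itself gives no argument here (it states that the lemma is ``very easily confirmed'' and cites Fern\'andez-Fern\'andez), so there is no paper proof to compare against; your write-up supplies exactly what is missing. The reduction of (1) to (2) is a direct coordinate computation, your rearrangement for (2)~$\Rightarrow$~(3) is clean, and you correctly identify the one non-trivial point: (3)~$\Rightarrow$~(2) needs $\Lambda_{\bf k}\neq\varnothing$. Your justification of non-emptiness, taking ${\bf m}=dM\cdot{\bf 1}$ with $d=[\Z^{n\times 1}:\Z A_\sigma]$ and $M$ large (so that $A_{\bar\sigma}{\bf m}\in d\,\Z^{n\times 1}\subset\Z A_\sigma$ and ${\bf k}+{\bf m}\ge 0$), is sound and is the step most readers would want spelled out.
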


\begin{lem}
Take a representative $\{ [A_{\barsigma}{\bf k}(i)]\}_{i=1}^r$ of a finite Abelian group $\Z^{n\times 1}/\Z A_\sigma.$ Then, we have a decomposition
\begin{equation}
\Z^{\barsigma}_{\geq 0}=\bigsqcup_{j=1}^{r}\Lambda_{{\bf k}(j)}.
\end{equation}
\end{lem}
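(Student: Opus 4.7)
The plan is to realize each $\Lambda_{\bf k}$ as a fibre of a natural map to $\Z^{n\times 1}/\Z A_\sigma$, after which the decomposition becomes a direct consequence of the hypothesis that the cosets $[A_{\barsigma}{\bf k}(1)],\dots,[A_{\barsigma}{\bf k}(r)]$ exhaust this quotient without repetition.

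Concretely, I would introduce the map
\[\phi:\Z^{\barsigma}_{\geq 0}\longrightarrow \Z^{n\times 1}/\Z A_\sigma,\quad {\bf l}\mapsto [A_{\barsigma}{\bf l}],\]
and rewrite the defining condition of (\ref{lambdak}) as $\phi({\bf l})=[A_{\barsigma}{\bf k}]$: an element ${\bf l}={\bf k}+{\bf m}\in\Z^{\barsigma}_{\geq 0}$ lies in $\Lambda_{\bf k}$ precisely when $A_{\barsigma}({\bf l}-{\bf k})\in\Z A_\sigma$. This identifies $\Lambda_{\bf k}=\phi^{-1}([A_{\barsigma}{\bf k}])$, which is exactly the content of the equivalence (2)$\Leftrightarrow$(3) of the previous lemma.

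Given this reformulation the conclusion is immediate. For each ${\bf l}\in\Z^{\barsigma}_{\geq 0}$ there is a unique index $j\in\{1,\dots,r\}$ with $\phi({\bf l})=[A_{\barsigma}{\bf k}(j)]$, so ${\bf l}$ belongs to $\Lambda_{{\bf k}(j)}$ for that one $j$ and no other. This simultaneously yields the covering statement and the pairwise disjointness asserted in the lemma.

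There is essentially no obstacle in this lemma itself; the only point worth flagging is the finiteness of $\Z^{n\times 1}/\Z A_\sigma$, which follows from $\det A_\sigma\neq 0$ making $\Z A_\sigma$ a full-rank sublattice of $\Z^{n\times 1}$, together with the standing assumption $\Z A=\Z^{n\times 1}$ that guarantees $\phi$ actually surjects onto the quotient so that a system of representatives with the prescribed form exists. All the real content was already placed in the preceding lemma, whose equivalence says that $\Lambda_{\bf k}$ depends only on the class $[A_{\barsigma}{\bf k}]$.
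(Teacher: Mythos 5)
Your proof is correct, and it is exactly the natural fibering argument that the paper has in mind (the paper itself omits the proof, citing Fern\'andez-Fern\'andez and remarking it is ``very easily confirmed''). The identification $\Lambda_{\bf k}=\phi^{-1}([A_{\barsigma}{\bf k}])$ is precisely the content of equivalence $(2)\Leftrightarrow(3)$ of the preceding lemma, and your remark that $\Z A=\Z^{n\times 1}$ is what makes a complete system of representatives of the form $[A_{\barsigma}{\bf k}(i)]$ exist is a well-placed clarification.
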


\noindent
Note that we always assume $\Z A=\Z^{n\times 1}.$ 

Thanks to these lemmata, we can observe that $\{\varphi_{\sigma,{\bf k}(i)}(z)\}_{i=1}^r$ is a set of $r$ linearly independent formal solutions of $M_A(c)$ unless $\varphi_{\sigma,{\bf k}(i)}(z)=0$ for some $i$. In order to ensure the non-vanishing of $\varphi_{\s,{\bf k}(i)}$, we say that a parameter vector $c$ is very generic with respect to $\sigma$ if $A_\sigma^{-1}(c+A_{\bar{\sigma}}{\bf m})$ does not contain any integer entry for any ${\bf m}\in\mathbb{Z}_{\geq 0}^{\bar{\sigma}}.$ Using this terminology, we can rephrase the observation above as follows:

\begin{prop}\label{prop:independence}
If $c\in\C^{n\times 1}$ is very generic with respect to $\sigma$, 
\begin{equation}
\Big\{\varphi_{\s,{\bf k}(i)}\Big\}_{i=1}^r
\end{equation}
is a linearly independent set of formal solutions of $M_A(c)$.
\end{prop}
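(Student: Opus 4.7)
The plan is to exploit the explicit form of $\varphi_{\s,{\bf k}(i)}(z)$ displayed in (\ref{seriesphi}): each series is a common branch factor $z_\sigma^{-A_\sigma^{-1}{\bf c}}$ times a formal power series in the Laurent monomials $w^{\bf m}\overset{\mathrm{def}}{=}(z_\sigma^{-A_\sigma^{-1}A_{\bar{\sigma}}}z_{\bar{\sigma}})^{\bf m}$, and the supports of these power series, indexed over $i$, are pairwise disjoint subsets of $\Z^{\bar\sigma}_{\geq 0}$. Thus linear independence will follow from (a) the fact that distinct exponents $\bf m\in\Z^{\bar\sigma}_{\geq 0}$ produce distinct Laurent monomials in $z$, and (b) the non-vanishing of every coefficient in each series $\varphi_{\s,{\bf k}(i)}$.

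Concretely, I would argue as follows. After pulling out the common factor $z_\sigma^{-A_\sigma^{-1}{\bf c}}$, write
\begin{equation*}
\varphi_{\s,{\bf k}(i)}(z)=z_\sigma^{-A_\sigma^{-1}{\bf c}}\sum_{{\bf n}\in\Lambda_{{\bf k}(i)}} c_{\bf n}\, w^{\bf n},\qquad c_{\bf n}=\frac{1}{\Gamma({\bf 1}_\sigma-A_\sigma^{-1}({\bf c}+A_{\bar{\sigma}}{\bf n}))\,{\bf n}!}.
\end{equation*}
The monomials $\{w^{\bf n}\}_{{\bf n}\in\Z_{\geq 0}^{\bar\sigma}}$ are $\C$-linearly independent in the formal Laurent series ring, since the exponent vector of $w^{\bf n}$ in the $z$-variables recovers $\bf n$ from its $z_{\bar\sigma}$-part. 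By Lemma 2.2, the index sets $\Lambda_{{\bf k}(i)}$ partition $\Z^{\bar\sigma}_{\geq 0}$, so every monomial $w^{\bf n}$ appears in at most one $\varphi_{\s,{\bf k}(i)}$. Finally, the very generic hypothesis on $c$ means that $A_\sigma^{-1}({\bf c}+A_{\bar\sigma}{\bf n})$ has no integer entry for any ${\bf n}\in\Z^{\bar\sigma}_{\geq 0}$; in particular ${\bf 1}_\sigma-A_\sigma^{-1}({\bf c}+A_{\bar\sigma}{\bf n})$ lies outside the poles of $\Gamma$, so $c_{\bf n}\neq 0$ for every ${\bf n}\in\Lambda_{{\bf k}(i)}$ and every $i$.

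Assembling these ingredients, suppose $\sum_{i=1}^{r}\lambda_i\varphi_{\s,{\bf k}(i)}(z)=0$. Cancel the common factor $z_\sigma^{-A_\sigma^{-1}{\bf c}}$ and collect the coefficient of $w^{\bf n}$ for a fixed ${\bf n}\in\Z_{\geq 0}^{\bar\sigma}$. By the partition property there is a unique $i$ with ${\bf n}\in\Lambda_{{\bf k}(i)}$, so that coefficient is $\lambda_i c_{\bf n}$. Non-vanishing of $c_{\bf n}$ forces $\lambda_i=0$, and letting ${\bf n}$ range over $\bigsqcup_i\Lambda_{{\bf k}(i)}=\Z^{\bar\sigma}_{\geq 0}$ yields $\lambda_i=0$ for all $i$. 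There is really no hard step here: the proof is a bookkeeping consequence of (\ref{seriesphi}) and Lemma 2.2, and the only thing to keep in mind is that the very generic hypothesis is used precisely to guarantee that none of the $\Gamma$-factors in the denominator ever sends a coefficient to infinity (equivalently, that none of the $\Gamma$-series becomes identically zero term-by-term).
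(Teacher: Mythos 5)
Your argument is correct and is precisely the one the paper intends (the observation immediately preceding the proposition, built on the two lemmata about $\Lambda_{\bf k}$): pull out the common $z_\sigma^{-A_\sigma^{-1}c}$ factor, use the disjointness of the supports $\Lambda_{{\bf k}(i)}$, and use the very generic condition to guarantee every $\Gamma$ in the denominator of (\ref{seriesphi}) stays away from its poles so that each coefficient is non-zero. One small slip in wording at the very end: a pole of a $\Gamma$-factor sends the corresponding coefficient $c_{\bf n}=1/(\Gamma(\cdots){\bf n}!)$ to \emph{zero}, not infinity (your parenthetical has it right), and the partition of $\Z_{\geq 0}^{\bar\sigma}$ is the second lemma in \S2, not the first.
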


\noindent
Let us put
\begin{equation}
H_\sigma=\Big\{ y\in\R^n\mid |A_\sigma^{-1}y|=1\Big\},
\end{equation}
and
\begin{equation}
U_\sigma=\left\{z\in(\C^*)^N\mid |z_\sigma^{-A_\sigma^{-1}{\bf a}(j)}z_{j}|<R, \forall {\bf a}(j)\in H_\sigma\setminus\sigma\right\},
\end{equation}
where $R>0$ is a small positive number. We also put 
\begin{equation}
\barsigma_+=\Big\{j\in\barsigma\mid |A_\sigma^{-1}{\bf a}(j)|>1\Big\}.
\end{equation}
The following fact was observed in \cite{FF}.

\begin{prop}\label{prop:GevreyOrder}
Put $s_j=|A_\sigma^{-1}{\bf a}(j)|$ for $j\in\barsigma_+.$ Then, $\varphi_{\sigma,{\bf k}(i)}(z)$ is of Gevrey multi-order ${\bf s}=(s_j)_{j\in\barsigma_+}$ along $Y_{\barsigma_+}$ at any point $p\in U_\sigma\cap Y_{\barsigma_+}.$
\end{prop}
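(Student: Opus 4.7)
I would establish the Gevrey bound by a direct Stirling estimate on the coefficients of $\varphi_{\sigma,\mathbf{k}(i)}$. After passing to monomial coordinates $w_j := z_\sigma^{-A_\sigma^{-1}\mathbf{a}(j)} z_j$ for $j \in \bar\sigma$ (which absorbs the analytic prefactor $z_\sigma^{-A_\sigma^{-1}\mathbf{c}}$), the series (\ref{seriesphi}) becomes
\begin{equation*}
\sum_{\mathbf{n} \in \Lambda_{\mathbf{k}(i)}} C_{\mathbf{n}}\, w^{\mathbf{n}}, \qquad C_{\mathbf{n}} := \frac{1}{\Gamma(\mathbf{1}_\sigma - A_\sigma^{-1}(\mathbf{c}+A_{\bar\sigma}\mathbf{n}))\,\mathbf{n}!},
\end{equation*}
and the task reduces to showing that its coefficient as a formal power series in $z_{\bar\sigma_+}$, viewed as a function of $(z_\sigma, z_{\bar\sigma_-})$, is analytic near $p$ and bounded by $C \rho^{|\mathbf{m}_+|} \prod_{j \in \bar\sigma_+} (m_j!)^{s_j-1}$.

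The technical heart is the uniform asymptotic
\begin{equation*}
\log|C_{\mathbf{n}}| = \sum_{j \in \bar\sigma} n_j (s_j - 1)\log n_j + O(|\mathbf{n}|) \qquad (|\mathbf{n}| \to \infty),
\end{equation*}
obtained by applying Stirling to each scalar component of $\Gamma(\mathbf{1}_\sigma - A_\sigma^{-1}(\mathbf{c}+A_{\bar\sigma}\mathbf{n}))^{-1}$: where the $i$-th entry of $A_\sigma^{-1} A_{\bar\sigma}\mathbf{n}$ is negative, Stirling applies directly to $\Gamma$ at a large positive argument, while where it is positive the reflection formula $\Gamma(1-z)\Gamma(z) = \pi/\sin(\pi z)$ reduces the estimate to the same with an extra $\sin$-factor bounded below, thanks to the very-generic assumption on $c$ keeping $(A_\sigma^{-1}(\mathbf{c}+A_{\bar\sigma}\mathbf{n}))_i$ uniformly away from the integers. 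Summing over $i$ and using $\sum_i (A_\sigma^{-1}A_{\bar\sigma}\mathbf{n})_i = \sum_j n_j s_j$ together with Stirling on $\mathbf{n}!^{-1}$ yields the displayed identity.

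With this bound, the $\bar\sigma_-$-summation converges uniformly on $U_\sigma$: when $s_j = 1$, the condition $|w_j|<R$ built into the definition of $U_\sigma$ absorbs the remaining geometric growth, and when $s_j < 1$ one even has factorial decay. The coefficient of $z_{\bar\sigma_+}^{\mathbf{m}_+}$ is thereby analytic near $p$ of size $C' \rho'^{|\mathbf{m}_+|} \prod_{j \in \bar\sigma_+} (m_j!)^{s_j-1}$; dividing by the Gevrey weight $(\mathbf{m}_+!)^{\mathbf{s}-1}$ cancels this factorial growth and leaves a geometric series in $z_{\bar\sigma_+}$ convergent near $p$, which is precisely the Gevrey multi-order condition. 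The main obstacle is the sign-dependent case analysis for the entries of $A_\sigma^{-1}A_{\bar\sigma}\mathbf{n}$ together with the uniform lower bound on the $\sin$-factor arising from reflection; this is the technical substance of the argument in \cite{FF}, to which one may ultimately defer.
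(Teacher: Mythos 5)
The paper offers no proof of this proposition; it simply cites Fern\'andez-Fern\'andez \cite{FF}, so there is no ``paper's argument'' against which to compare yours. Your sketch reconstructs the standard Stirling argument, which is indeed what lies behind the cited result: factor $C_{\mathbf n}$, estimate $\mathbf n!$ and each scalar $\Gamma\big(1-(A_\sigma^{-1}(\mathbf c+A_{\bar\sigma}\mathbf n))_i\big)$ separately (using reflection when that argument is large and negative), sum using $\sum_i (A_\sigma^{-1}A_{\bar\sigma}\mathbf n)_i=\sum_j n_j s_j$, and split $\bar\sigma$ into $\bar\sigma_+$ (where $(m_j!)^{s_j-1}$ cancels the growth) and $\bar\sigma_-$ (where the smallness of $|w_j|$ on $U_\sigma$, or the factorial decay when $s_j<1$, gives convergence). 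The overall structure and the sign bookkeeping are correct.

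Two places warrant tightening before this can stand as a self-contained proof. First, you attribute the uniform lower bound on $|\sin(\pi\alpha_i(\mathbf n))|$ to very genericity alone; that only rules out the vanishing of each individual term. Uniformity over $\mathbf n$ follows from the extra observation that $(A_\sigma^{-1}A_{\bar\sigma}\mathbf n)_i$ ranges over the discrete lattice $\tfrac{1}{\det A_\sigma}\mathbb Z$, so $\alpha_i(\mathbf n)\bmod\mathbb Z$ takes only $|\det A_\sigma|$ values, each nonzero by (very) genericity; this gives a \emph{finite}, hence uniform, lower bound. Second, the displayed relation $\log|C_{\mathbf n}|=\sum_j n_j(s_j-1)\log n_j+O(|\mathbf n|)$ must be interpreted as a two-sided inequality with a constant uniform over the whole orthant $\mathbf n\in\Lambda_{\mathbf k(i)}\subset\mathbb Z_{\ge0}^{\bar\sigma}$, not merely a pointwise asymptotic along a fixed ray; getting this requires the uniform sin-bound just mentioned together with a uniform version of Stirling, and is exactly the technical content one would be recovering from \cite{FF}. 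Your closing deferral to \cite{FF} is therefore appropriate, and these two points are precisely where the remaining work lies.
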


As is well-known in the literature, under a genericity condition, we can construct a basis of holomorphic solutions of  GKZ system $M_A(c)$ consisting of $\Gamma$-series with the aid of regular triangulation. Let us revise the definition of a regular triangulation. In general, for any subset $\sigma$ of $\{1,\dots,N\},$ we denote $\cone(\sigma)$ the positive span of $\{{\bf a}(1),\dots,{\bf a}(N)\}$ i.e., $\cone(\sigma)=\displaystyle\sum_{i\in\sigma}\R_{\geq 0}{\bf a}(i).$ We often identify a subset $\sigma\subset\{1,\dots,N\}$ with the corresponding set of vectors $\{{\bf a}(i)\}_{i\in\sigma}$ or with the set $c.h.\{0,\{{\bf a}(i)\}_{i\in\sigma}\}$. A collection $T$ of subsets of $\{1,\dots,N\}$ is called a triangulation if $\{\cone(\sigma)\mid \sigma\in T\}$ is the set of cones in a simplicial fan whose support equals $\cone(A)$. For any generic choice of a vector $\omega\in\R^{N\times 1},$ we can define a triangulation $T(\omega)$ as follows. A subset $\sigma\subset\{1,\dots,N\}$ belongs to $T(\omega)$ if there exists a vector ${\bf n}\in\R^{n\times 1}$ such that
\begin{align}
{\bf n}\cdot{\bf a}(i)=\omega_i &\text{ if } i\in\sigma\\
{\bf n}\cdot{\bf a}(j)<\omega_j &\text{ if } j\in\barsigma.
\end{align}
A triangulation $T$ is called a regular triangulation if $T=T(\omega)$ for some $\omega\in\R^{N\times 1}.$ Let us consider a regular triangulation $T$ such that for any element $\sigma\in T$, one has 
\begin{equation}
s_j=|A_\sigma^{-1}{\bf a}(j)|\leq 1\;\; (\forall j\in\barsigma).
\end{equation}
Note that such a regular triangulation always exists. Suppose that the parameter vector $c$ is very generic with respect to any $\sigma\in T$. Then, it was shown in \cite{FF} that we have $\rank M_A(c)=\vol_\Z(\Delta_A).$ Since $\vol_\Z(\Delta_A)=\displaystyle\sum_{\substack{\sigma\in T\\ |\sigma|=n}}\vol_\Z(\sigma),$ we can conclude that  

\begin{equation}
\displaystyle\bigcup_{\sigma\in T}\left\{ \varphi_{v_\sigma^{{\bf k}(i)}}\right\}_{i=1}^{r}
\end{equation}
is a basis of holomorphic solutions of $M_A(c)$ on $U_{T}\overset{def}{=}\displaystyle\bigcap_{\sigma\in T}U_\sigma\neq\varnothing$ where $r=\vol_\Z(\sigma)=|\Z/\Z A_\sigma|.$ 

\begin{rem}
We define an $N\times(N-n)$ matrix $B_\sigma$ by
\begin{equation}
B_\sigma=
\begin{pmatrix}
-A_\sigma^{-1}A_{\barsigma}\\
{\bf I}_{\barsigma}
\end{pmatrix}
\end{equation}
and a cone $C_\omega$ by
\begin{equation}
C_\omega=\Big\{ \omega\in\R^{N\times 1}\mid {}^t\omega\cdot B_\sigma>0\Big\}.
\end{equation}
Then, one can verify that $C_{T}\overset{def}{=}\displaystyle\bigcap_{\sigma\in T}C_\sigma$ is a non-empty open cone characterised by the formula
\begin{equation}
C_T=\Big\{ \omega\in\R^{N\times 1}\mid T(\omega)=T\Big\}.
\end{equation}
From the definition of $U_\sigma$, we can confirm that $z$ belongs to $U_T$ if $(\log|z_1|,\dots,\log|z_N|)$ belongs to a sufficiently far translation of $C_T$ inside itself, which implies $U_T\neq\varnothing.$
\end{rem}

\begin{rem}
It was obtained in \cite{Ho} and \cite{SW2} COROLLARY 3.16 that $M_A(c)$ is regular holonomic if and only if  there exists a linear function $l:\Q^n\rightarrow\Q$ such that $l({\bf a}(j))=1$ holds for any $j=1,\dots,N.$ Thus, when $M_A(c)$ is regular, any regular triangulation $T(\omega)$ and any simplex $\sigma\in T(\omega)$ gives only convergent series $\varphi_{v_\sigma^{{\bf k}(i)}}$ on $U_{T(\omega)}.$
\end{rem}

\end{section}

\begin{section}{Construction of integration contours I: Laplace integral representations}\label{SectionLaplace}
In this section, we construct a basis of integration cycles for Laplace integral representations
\begin{equation}\label{LaplaceInt2}
\frac{1}{(2\pi\ii)^{n+1}}\int_\Gamma e^{h_z(x)}x^{c-{\bf 1}}dx
\end{equation}
in a combinatorial way. Our construction is based on the idea of \cite{GG}. For this purpose, let us take any $\sigma\subset\{ 1,\cdots ,N \}$ such that $|\sigma|=n$, $\det A_\sigma\neq 0$, and $s_j=|A_\sigma^{-1}{\bf a}(j)|\leq 1$ for any $j\in\barsigma$. In view of \cref{prop:GevreyOrder}, this ensures that all $\Gamma$-series $\{\varphi_{v^{{\bf k}(i)}}(z)\}_{i=1}^r$ associated to the simplex $\sigma$ are convergent. We consider a covering map
\begin{equation}
\mathbb{T}^n_x\overset{p}{\rightarrow}\mathbb{T}^\s_{\xi_\sigma}
\end{equation}
given by
\begin{equation}
x\mapsto \xi_\sigma=z_\s x^{A_\sigma}.
\end{equation}
Here, we always reorder elements $\{ i_1,\dots,i_n\}$ of $\sigma$ so that $i_1<\dots<i_n$. By a straightforward computation, we have a transformation formula of volume forms:
\begin{equation}
\frac{dx}{x}=\frac{1}{\det (A_\sigma)}\frac{d\xi_\sigma}{\xi_\s}.
\end{equation}
Now, we are going to construct our integration cycle in $x$-space as a pull-back cycle of $\xi_\s$-space. In order to clarify the meaning of pull-back of a cycle, we need the following construction. Let $X,Y$ be oriented smooth $n$-dimensional manifolds and let $\pi:X\rightarrow Y$ be a covering map of degree $d$. Suppose that we are given a local system $\mathcal{L}$ on $Y$. We denote the Poincar\'e duality morphism by $\Phi_Y:\Homo^{n-p}_c(Y,\mathcal{L})\rightarrow\Homo_p(Y,\mathcal{L})$ and by $\Phi_X:\Homo^{n-p}_c(X,\pi^*\mathcal{L})\rightarrow\Homo_p(X,\pi^*\mathcal{L})$. Then the composition of the following morphisms is denoted by $\pi^*$:
\begin{equation}
\Homo_p(Y,\mathcal{L})\overset{\Phi_Y^{-1}}{\rightarrow}\Homo^{n-p}_c(Y,\mathcal{L})\overset{\pi^*}{\rightarrow}\Homo^{n-p}_c(X,\pi^*\mathcal{L})\overset{\Phi_X}{\rightarrow}\Homo_p(X,\pi^*\mathcal{L}).
\end{equation}
Note that the pull-back $\Homo^{n-p}_c(Y,\mathcal{L})\overset{\pi^*}{\rightarrow}\Homo^{n-p}_c(X,\pi^*\mathcal{L})$ is well-defined since $\pi$ is proper.

\begin{lem}
For any $p$-cycle $[\gamma]\in\Homo_p(Y,\mathcal{L})$ and for any $p$-cocycle $[\omega]\in\Homo^p(Y,\mathcal{L}^\vee)$, one has an identity
\begin{equation}
\int_{\pi^*\gamma}\pi^*\omega=d\int_\gamma\omega.
\end{equation}
\end{lem}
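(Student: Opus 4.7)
The strategy is to translate both sides into integrals of compactly supported top-cohomology classes on $Y$ and $X$ respectively, and then invoke the standard change-of-variables formula for a finite covering map. Set $\eta = \Phi_Y^{-1}([\gamma]) \in H^{n-p}_c(Y,\mathcal{L})$. The first step is to rewrite
\begin{equation*}
\int_\gamma \omega \;=\; \int_Y \eta \cup \omega,
\end{equation*}
where the cup product uses the canonical pairing $\mathcal{L}\otimes\mathcal{L}^\vee\to\underline{\mathbb{C}}_Y$ and produces a class in $H^n_c(Y,\underline{\mathbb{C}}_Y)$ which is integrated against the fundamental class; this is the standard compatibility between Poincaré duality and the $(H^p,H_p)$-pairing on an oriented manifold with local-system coefficients.

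Second, unwinding the definition of $\pi^{*}$ on homology given just before the lemma, $\pi^{*}[\gamma] = \Phi_X(\pi^{*}\eta)$. Applying the same duality identity on $X$, together with the fact that cohomological pull-back is multiplicative with respect to the cup product, I obtain
\begin{equation*}
\int_{\pi^{*}\gamma}\pi^{*}\omega \;=\; \int_X \pi^{*}\eta \cup \pi^{*}\omega \;=\; \int_X \pi^{*}(\eta \cup \omega).
\end{equation*}
At this point the lemma reduces to proving that for any class $\alpha \in H^n_c(Y,\underline{\mathbb{C}}_Y)$ one has $\int_X \pi^{*}\alpha = d\int_Y \alpha$.

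Finally, this change-of-variables identity is a local computation. Represent $\alpha$ by a compactly supported top form and choose a partition of unity subordinate to an open cover of $Y$ by evenly covered sets $U$; over each such $U$ one has $\pi^{-1}(U)=\bigsqcup_{i=1}^{d} U_i$ with each $\pi|_{U_i}:U_i\to U$ an orientation-preserving diffeomorphism (orientations of $X$ and $Y$ being chosen so that $\pi$ is orientation-preserving), and therefore $\int_{\pi^{-1}(U)}\pi^{*}\alpha = \sum_{i=1}^{d}\int_{U_i}(\pi|_{U_i})^{*}\alpha = d\int_U \alpha$. Summing over the partition of unity yields the claim, and combining with the first two steps finishes the proof.

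The main point to be careful about is the compatibility in the first step: the Poincaré duality isomorphism $\Phi_Y$ has to be the one induced by cap product with the fundamental class, and the pairing $\int_\gamma\omega$ must be interpreted as the corresponding $(H_p,H^p)$-pairing. Once the orientations on $X$ and $Y$ and the sign conventions for $\Phi_X,\Phi_Y$ are fixed compatibly, every step above is formal; the only genuinely geometric input is the local triviality of the covering in the last step.
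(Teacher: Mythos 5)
Your proof is correct and follows essentially the same route as the paper: replace $\gamma$ by its Poincaré dual $\eta=\Phi_Y^{-1}(\gamma)$, use the duality pairing $\int_X\xi\wedge\eta=\int_{\Phi_X(\xi)}\eta$ to convert both sides to top-degree compactly supported cohomology pairings, and conclude with the degree-$d$ change-of-variables formula for the finite covering $\pi$. The only difference is that you spell out the final step $\int_X\pi^*\alpha=d\int_Y\alpha$ via a partition of unity over evenly covered sets, whereas the paper treats this identity as a one-line fact; otherwise the two arguments are the same.
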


\begin{proof}
By the definition of Poincar\'e duality, for any $[\xi]\in\Homo^{n-p}_c(X,\pi^*\mathcal{L})$ and $[\eta]\in\Homo^p(X,\pi^*\mathcal{L}^\vee)$, we have 
\begin{equation}
\int_X\xi\wedge\eta=\int_{\Phi_X(\xi)}\eta.
\end{equation}
Therefore, we obtain a sequence of equalities
\begin{align}
\int_{\pi^*\gamma}\pi^*\omega&=\int_{\Phi_X\circ\pi^*\circ\Phi_Y^{-1}(\gamma)}\pi^*\omega\\
 &=\int_X\pi^*\Phi_Y^{-1}(\gamma)\wedge\pi^*\omega\\
 &=d\int_Y\Phi_Y^{-1}(\gamma)\wedge\omega\\
 &=d\int_\gamma\omega.
\end{align}

\end{proof}

\noindent
We turn back to the construction of integration cycle of (\ref{LaplaceInt2}). We do not specify our integration contour $\Gamma$ for the moment, but we suppose that it is a pull-back contour of some cycle $\gamma$ in $\T^\s_{\xi_\sigma}$, i.e., $\Gamma=p^*\gamma.$ By the construction of $\Gamma$, we have

\begin{align}
f_{\sigma,0}(z)&\overset{\rm def}{=}\frac{1}{(2\pi\sqrt{-1})^{n+1}}\int_\Gamma e^{h_z(x)}x^{\bf c-1}dx\\
 &=\frac{z_\sigma^{-A_\sigma^{-1}{\bf c}}}{(2\pi\sqrt{-1})^{n+1}}\int_{\gamma}\exp\{\sum_{i\in\sigma}\xi_i+\sum_{j\in\bar{\sigma}}z_\sigma^{-A_\sigma^{-1}{\bf a}(j)}z_j\xi_\sigma^{A_\sigma^{-1}{\bf a}(j)}\}\xi_\sigma^{A_\sigma^{-1}{\bf c}-{\bf 1}}d\xi_\sigma.
\end{align}
Now we apply the plane wave expansion formula. Let us introduce a new coordinate by
\begin{equation}
\xi_i=\rho u_i\;\;(i\in\sigma),
\end{equation}
where $\rho\in\C^*$ and $u_i$ are coordinates of $\{u_\sigma=(u_i)_{i\in\sigma}\in(\mathbb{C}^*)^\s\mid\displaystyle\sum_{i\in\sigma}u_i=1\}.$ Then, it is standard that we have an equality of volume forms 
\begin{equation}
d\xi_\sigma=\rho^{n-1}d\rho du_\sigma,
\end{equation}
where $du_\sigma=\displaystyle\sum_{k=1}^n(-1)^{k-1}u_kdu_{\widehat{i_k}}$ with $du_{\widehat{i_k}}=du_{i_1}\wedge\cdots\wedge \widehat{du_{i_k}}\wedge\cdots\wedge du_{i_n}$. In this new coordinate, we have

\begin{align}
f_{\sigma,0}(z)&=\frac{z_\sigma^{-A_\sigma^{-1}{\bf c}}}{(2\pi\sqrt{-1})^{n+1}}\int_{\gamma}\exp\{\rho+\sum_{j\in\bar{\sigma}}z_\sigma^{-A_\sigma^{-1}{\bf a}(j)}z_ju_\sigma^{A_\sigma^{-1}{\bf a}(j)}\rho^{|A_\sigma^{-1}{\bf a}(j)|}\}\times\nonumber\\
 &\quad \rho^{|A_\sigma^{-1}{\bf c}|-1}u_\sigma^{A_\sigma^{-1}{\bf c}-{\bf 1}_\sigma}d\rho du_\sigma.
\end{align}

\begin{figure}[t]
\begin{center}
\begin{tikzpicture}
\draw (0,0) node[below right]{O}; 
\draw[thick, ->] (-1.5,0)--(3.2,0) node[right]{$\xi_1$} ; 
\draw[thick, ->] (0,-1.2)--(0,3.2) node[above]{$\xi_2$} ; 
\draw[-] (3,-1)--(-1,3) node[left]{$\{ u_1+u_2=1\}$};
\draw[->] (-0.5,-0.5)--(3,3) node[right]{$\rho$};
\end{tikzpicture}
\caption{plane wave coordinate for $n=2$ and $\sigma=\{ 1,2\}$}
\end{center}
\end{figure}

\noindent
At this moment, we can construct our integration cycle $\gamma$ in $(\rho,u_\sigma)$ coordinate as a product of a cycle in $\rho$ direction and that in $u_\sigma$ direction.

In $\rho$ direction, we take the so-called Hankel contour $\Gamma_0$. $\Gamma_0$ is given by the formula
$\Gamma_0=(-\infty,-\delta]e^{-\pi\sqrt{-1}}+l_{(0+)}-(-\infty,-\delta]e^{\pi\sqrt{-1}},$
where $e^{\pm\pi\sqrt{-1}}$ stands for the argument of the variable and $l_{(0+)}$ is a small loop which encircles the origin in the counter-clockwise direction starting from and ending at the point $-\delta.$ 
Using this notation, we have 

\begin{lem}\label{lemma:lemma}
Suppose $\alpha\in\mathbb{C}.$ Then we have

$$\int_{\Gamma_{0}}\xi^{\alpha-1}e^\xi d\xi=\frac{2\pi\sqrt{-1}}{\Gamma(1-\alpha)}.$$
\end{lem}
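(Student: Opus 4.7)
The plan is to use the classical Hankel-contour computation together with analytic continuation in $\alpha$. First, I would observe that both sides of the asserted identity are entire functions of $\alpha$: the contour $\Gamma_{0}$ stays at positive distance $\delta$ from the origin on $l_{(0+)}$, the factor $e^{\xi}$ provides rapid decay on the two rays, and Cauchy's theorem ensures independence of $\delta>0$, so the left-hand side depends holomorphically on $\alpha\in\C$; the right-hand side is the classical entire function $2\pi\ii/\Gamma(1-\alpha)$. By the identity theorem it therefore suffices to establish the equality on the half-plane $\{\re\alpha>0\}$.

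For $\re\alpha>0$, I would split $\Gamma_{0}$ into its two rays and the small loop. Parametrising the rays by $\xi=se^{\mp\ii\pi}$ with $s\in[\delta,\infty)$ yields the branch factors $e^{\mp\ii\pi(\alpha-1)}$ on $\xi^{\alpha-1}$; after accounting for the orientations prescribed in the definition of $\Gamma_{0}$, the two ray contributions combine into
\[
\bigl(e^{-\ii\pi(\alpha-1)}-e^{\ii\pi(\alpha-1)}\bigr)\int_{\delta}^{\infty}s^{\alpha-1}e^{-s}ds.
\]
A crude sup bound shows that the small loop contributes $O\bigl(\delta^{\re\alpha}\bigr)$, which is killed as $\delta\to 0$ by the assumption $\re\alpha>0$, while the remaining integral tends to $\Gamma(\alpha)$.

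Finally, I would simplify the prefactor via $e^{-\ii\pi(\alpha-1)}-e^{\ii\pi(\alpha-1)}=-2\ii\sin\pi(\alpha-1)=2\ii\sin\pi\alpha$ and invoke Euler's reflection formula $\sin\pi\alpha\cdot\Gamma(\alpha)=\pi/\Gamma(1-\alpha)$, obtaining the desired value $2\pi\ii/\Gamma(1-\alpha)$; analytic continuation then extends the identity to all $\alpha\in\C$. There is no substantial obstacle here -- this is the textbook Hankel-contour evaluation of $1/\Gamma$ -- and the only point requiring some care is the consistent choice of branches of $\xi^{\alpha-1}$ on the two sides of the cut along the negative real axis.
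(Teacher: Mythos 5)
Your argument is correct and matches the paper's intent exactly: the paper dispatches this lemma in one line as ``straightforward from the definition of $\Gamma$ function and the reflection formula,'' and what you have written out is precisely that standard Hankel-contour computation (split $\Gamma_0$ into the two rays and the small loop, collect the branch factors $e^{\mp\ii\pi(\alpha-1)}$ from $\xi^{\alpha-1}$, let $\delta\to 0$ for $\re\alpha>0$, apply the reflection formula, and extend by analytic continuation). No substantive difference from what the paper takes for granted.
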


\noindent
The proof of this formula is straightforward from the definition of $\Gamma$ function and the reflection formula
\begin{equation}\label{ReflectionFormula}
\Gamma(\alpha)\Gamma(1-\alpha)=\frac{\pi}{\sin(\pi\alpha)}.
\end{equation}

As for the construction of a cycle in $u_\sigma$ direction, we need the multidimensional Pochhammer contour. We cite following lemma due to \cite{B}. 

\begin{lem}[\cite{B} Proposition 6.1]\label{lemma:Beukers}
Let $\Delta^k\subset\mathbb{R}^k$ be the k simplex $\Delta^k=\{(x_1,\cdots ,x_k)\in\mathbb{R}^k|x_1,\cdots,x_k\geq 0, \sum_{j=1}^kx_j\leq 1\}$ and $\alpha_1,\cdots ,\alpha_{k+1}\in\mathbb{C}$. If we denote by $P_k$ the Pochhammer cycle associated to $\Delta^k$ in the sense of \cite{B}, we have
\begin{equation}\label{DirichletInt}
\int_{P_k}t_1^{\alpha_1-1}\cdots t_{k}^{\alpha_k-1}(1-t_1\cdots -t_k)^{\alpha_{k+1}-1}dt_1\cdots dt_k=\prod_{j=1}^{k+1}(1-e^{-2\pi\sqrt{-1}\alpha_j})\frac{\Gamma(\alpha_1)\cdots\Gamma(\alpha_{k+1})}{\Gamma(\alpha_1+\cdots +\alpha_{k+1})}.
\end{equation}
\end{lem}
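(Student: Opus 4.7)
The plan is to reduce the claim to real positive parameters by analytic continuation and then evaluate the integral via an Aomoto--Kita regularization of the simplex combined with the classical Dirichlet formula. First, both sides of the claimed identity are entire functions of $(\alpha_1,\ldots,\alpha_{k+1})\in\C^{k+1}$: the left-hand side because $P_k$ is a compact cycle in the complement of the divisor $D=\{t_1\cdots t_k(1-t_1-\cdots-t_k)=0\}$, on which the multi-valued integrand is holomorphic; the right-hand side because each pole of $\Gamma(\alpha_j)$ at $\alpha_j\in\Z_{\leq 0}$ is cancelled by the corresponding simple zero of $(1-e^{-2\pi\sqrt{-1}\alpha_j})$ (as in \cref{lemma:lemma} via the reflection formula \eqref{ReflectionFormula}). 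Hence it suffices to prove the identity on the open region $\{\Re\alpha_j>0,\ \forall j\}$, where the classical Dirichlet integral
\[
\int_{\Delta^k}t_1^{\alpha_1-1}\cdots t_k^{\alpha_k-1}(1-t_1-\cdots-t_k)^{\alpha_{k+1}-1}dt = \frac{\Gamma(\alpha_1)\cdots\Gamma(\alpha_{k+1})}{\Gamma(\alpha_1+\cdots+\alpha_{k+1})}
\]
converges absolutely.

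In this region I construct $P_k$ explicitly as an Aomoto--Kita regularization of $\Delta^k$ with respect to the normal-crossing divisor $D$. Concretely, take a truncation $\Delta^k_\epsilon=\{t_j\geq\epsilon,\ \sum_jt_j\leq 1-\epsilon\}$ and attach, to each codimension-$l$ face $F_I$ (indexed by a subset $I\subset\{1,\ldots,k+1\}$ of $l\leq k$ divisors meeting transversally at $F_I$), a tubular $k$-chain that wraps each divisor in $I$ by a small positively oriented circle. The resulting closed chain in $\C^k\setminus D$ is homologous, in the local system determined by the integrand, to $\prod_{j=1}^{k+1}(1-T_j)[\Delta^k]$, where $T_j$ denotes the monodromy operator around the divisor $D_j$ labelled by $j$ (with $T_{k+1}$ the monodromy around $\{1-\sum t_j=0\}$).

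Since each $T_j$ acts on the integral by multiplication by $e^{2\pi\sqrt{-1}\alpha_j}$, the operator $\prod_j(1-T_j)$ produces the multiplicative factor $\prod_{j=1}^{k+1}(1-e^{-2\pi\sqrt{-1}\alpha_j})$ after fixing the orientation of $P_k$ consistently with the one-dimensional case. Combined with the Dirichlet formula above this gives the identity on the region of convergence, and analytic continuation extends it to the whole of $\C^{k+1}$.

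The main obstacle I anticipate is the combinatorial verification that the tubular chains attached to all faces of $\Delta^k$ of codimensions $1,\ldots,k$ assemble cleanly into $\prod_j(1-T_j)[\Delta^k]$, without producing spurious cross-terms of the form $(1-e^{-2\pi\sqrt{-1}(\alpha_i+\alpha_j)})$. Such terms do appear if one instead iterates a one-dimensional Pochhammer construction via substitutions like $t_1=(1-t_2-\cdots-t_k)s$, because the divisor $\{1-\sum t_j=0\}$ then splits and acquires extra monodromy from the collapsing factor $(1-t_2-\cdots-t_k)$. I would circumvent this by induction on the face codimension $l=|I|$, using that in a tubular neighbourhood of $F_I$ the local picture factors as an $(k-l)$-dimensional cell along $F_I$ times $l$ small circles around the pairwise transverse divisors in $I$, so that the operators $\{T_j\}_{j\in I}$ act as genuinely commuting independent monodromies and the wrapping contributions multiply without interference.
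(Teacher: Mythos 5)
The paper does not prove this lemma: it is cited verbatim from \cite{B} (Proposition 6.1), and the text only records that $P_k$ is a compact cycle avoiding the coordinate hyperplanes and $\{\textstyle\sum_j t_j=1\}$. Your sketch must therefore stand on its own.

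Your high-level strategy---reduce to $\Re\alpha_j>0$ by entirety of both sides, invoke Dirichlet's formula, and obtain the extra factor by identifying $P_k$ with $\prod_{j=1}^{k+1}(1-T_j)[\Delta^k]$ in twisted homology---is the natural one and is sound in spirit. But the construction you describe does not realize that identification, and this is a genuine gap. You attach tubular pieces to the faces $F_I$ of $\Delta^k$; since $\Delta^k$ has $k+1$ facets and its faces are intersections of subsets of them, every nonempty face is cut out by at most $k$ facets, so only $|I|\le k$ occurs. Yet $\prod_{j=1}^{k+1}(1-T_j)$ expands with a term $(-1)^{k+1}T_1\cdots T_{k+1}$, and $\bigcap_{j=1}^{k+1}D_j=\varnothing$ contributes no face, hence no tubular piece. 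Already for $k=1$ the bulk of the classical Pochhammer loop consists of four translates of $[\epsilon,1-\epsilon]$ carrying the branches $1,T_0,T_1,T_0T_1$, with the $T_0T_1$ translate coming from traversing both small loops and \emph{not} from any face, even though $D_0\cap D_1=\varnothing$. Your induction on face codimension, invoking commuting local monodromies, therefore cannot produce the top term, and the chain you assemble is neither homologous to $\prod_j(1-T_j)[\Delta^k]$ nor, in general, closed over the group ring. A way to close the gap: build the genuine Aomoto--Kita regularization $\mathrm{reg}(\Delta^k)$, whose tubular piece at $F_I$ carries the coefficient $\prod_{j\in I}(1-e^{\pm 2\pi\ii\alpha_j})^{-1}$ and which pairs with the twisted form to give exactly $B(\alpha)$; then $\prod_j(1-T_j)\cdot\mathrm{reg}(\Delta^k)$ clears all denominators, is a compact twisted cycle with integer group-ring coefficients, and pairs to $\prod_j(1-e^{\pm 2\pi\ii\alpha_j})B(\alpha)$, after which one must still match this cycle with Beukers' $P_k$ and pin down the sign in the exponent, which your sketch leaves to ``fixing the orientation.'' Your observation about spurious cross-terms produced by iterated one-dimensional Pochhammer substitutions is a correct and important cautionary point, but the fix you propose does not resolve the missing-term problem.
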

\noindent
For the construction of $P_k$, see \cite{B}. We only note that $P_k$ is a compact cycle which does not intersect with any hyperplane $\{ x_j=0\}$ $(j=1,\dots,k)$ and $\{ x_1+\cdots+x_k=1\}$. For later use, it is more convenient to rewrite (\ref{DirichletInt}) as
\begin{equation}\label{ConvenientFormula}
\int_{P_k}t_1^{\alpha_1-1}\cdots t_{k}^{\alpha_k-1}(1-t_1\cdots -t_k)^{\alpha_{k+1}-1}dt_1\cdots dt_k=\frac{e^{-\pi\sqrt{-1}(\alpha_1+\dots+\alpha_{k+1})}(2\pi\sqrt{-1})^{k+1}}{\Gamma(1-\alpha_1)\cdots\Gamma(1-\alpha_{k+1})\Gamma(\alpha_1+\cdots+\alpha_{k+1})},
\end{equation}
where we used (\ref{ReflectionFormula}).
\begin{figure}[t]
\begin{minipage}{0.5\hsize}
\begin{center}
\begin{tikzpicture}
\draw[->-=.5,domain=-175:175] plot ({-1+cos(\x)}, {sin(\x)});
\draw[-<-=.5] ({-1+cos(-175)},{sin(-175)}) -- (-6, {sin(-175)});
\draw[->-=.5] ({-1+cos(175)},{sin(175)}) -- (-6, {sin(175)});
\node at (-1,0){$\cdot$};
\draw (-1,0) node[below right]{O};
\end{tikzpicture}
\caption{Hankel contour}
\end{center}
\end{minipage}
\begin{minipage}{0.5\hsize}
\begin{center}
\begin{tikzpicture}
\node at (0,0){$\cdot$};
\node at (4,0){$\cdot$};
\draw (0,0) node[below]{$t_1=0$};
\draw (4,0) node[below]{$t_1=1$};
\draw[-<-=.5,domain=-140:140] plot ({4+cos(\x)}, {0.4+sin(\x)});
\draw[-<-=.5,domain=40:320] plot ({cos(\x)}, {0.4+sin(\x)});
\draw[->-=.5,domain=-140:140] plot ({4+cos(\x)}, {-0.4+sin(\x)});
\draw[->-=.5,domain=40:320] plot ({cos(\x)}, {-0.4+sin(\x)});
    \coordinate (A1) at ({cos(40)}, {0.4+sin(40)});
    \coordinate (A2) at ({4+cos(-140)}, {-0.4+sin(-140)});
    \coordinate (B1) at ({4+cos(140)}, {-0.4+sin(140)});
    \coordinate (B2) at ({cos(40)}, {-0.4+sin(40)});
    \coordinate (C1) at ({cos(320)}, {-0.4+sin(320)});
    \coordinate (C2) at ({4+cos(140)}, {0.4+sin(140)});
    \coordinate (D1) at ({4+cos(-140)}, {0.4+sin(-140)});
    \coordinate (D2) at ({cos(320)}, {0.4+sin(320)});
\draw[->-=.75] (A1) -- (A2);
\draw[->-=.5] (B1) -- (B2);
\draw[->-=.75] (C1) -- (C2);
\draw[->-=.5] (D1) -- (D2);
\end{tikzpicture}
\caption{Pochhammer cycle $P_1$}
\end{center}
\end{minipage}
\end{figure}

Now, choose our $\gamma$ in a product form $\gamma=\Gamma_{0}\times P_{u_\sigma},$ where $P_{u_\sigma}$ is the Pochhammer cycle in $\Big\{u_\sigma=(u_i)_{i\in\sigma}\in(\mathbb{C}^*)^\s\mid\displaystyle\sum_{i\in\sigma}u_i=1\Big\}.$ Then, it can be readily seen that when $z\in U_\sigma,$ our integral $f_0(z)$ is absolutely convergent since $s_j\leq 1$ for any $j\in\bs$. Moreover, expanding the term 
\begin{equation}
\exp\Bigg\{\sum_{j\in\bar{\sigma}}z_\sigma^{-A_\sigma^{-1}{\bf a}(j)}z_ju_\sigma^{A_\sigma^{-1}{\bf a}(j)}\rho^{|A_\sigma^{-1}{\bf a}(j)|}\Bigg\},
\end{equation}denoting ${\bf e}(i)$ the column vector ${}^t(0,\dots,\overset{i}{\overset{\smile}{1}},\dots,0)$, and using \cref{lemma:lemma} and the formula (\ref{ConvenientFormula}), we have

\begin{align}
f_{\sigma,0}(z)&=\frac{z_\sigma^{-A_\sigma^{-1}{\bf c}}}{(2\pi\sqrt{-1})^{n+1}}\int_{\gamma}\exp\Big\{\rho+\sum_{j\in\bar{\sigma}}z_\sigma^{-A_\sigma^{-1}{\bf a}(j)}z_ju_\sigma^{A_\sigma^{-1}{\bf a}(j)}\rho^{|A_\sigma^{-1}{\bf a}(j)|}\Big\}\times\nonumber\\
 & \quad \rho^{|A_\sigma^{-1}{\bf c}|-1}u_\sigma^{A_\sigma^{-1}{\bf c}-{\bf 1}_\sigma}d\rho du_\sigma\\
 &=\frac{z_\sigma^{-A_\sigma^{-1}{\bf c}}}{(2\pi\sqrt{-1})^{n+1}}\sum_{{\bf m}\in\mathbb{Z}_{\geq 0}^{\bar{\sigma}}}\frac{(z_\sigma^{-A_\sigma^{-1}A_{\bar{\sigma}}}z_{\bar{\sigma}})^{\bf m}}{\bf m!}\times\nonumber\\
 &\quad \int\int e^\rho\rho^{|A_\sigma^{-1}{\bf c}|+|A_\sigma^{-1}A_{\bar{\sigma}}{\bf m}|-1}u_\sigma^{A_\sigma^{-1}{\bf c}+A_\sigma^{-1}A_{\bar{\sigma}}{\bf m}-{\bf 1}_\sigma}d\rho du_\sigma\\
 &=\frac{z_\sigma^{-A_\sigma^{-1}{\bf c}}}{(2\pi\sqrt{-1})^{n+1}}\sum_{{\bf m}\in\mathbb{Z}_{\geq 0}^{\bar{\sigma}}}\frac{(z_\sigma^{-A_\sigma^{-1}A_{\bar{\sigma}}}z_{\bar{\sigma}})^{\bf m}}{\bf m!}\frac{2\pi\sqrt{-1}}{\det A_\sigma\Gamma(1-|A_\sigma^{-1}{\bf c}|-|A_\sigma^{-1}A_{\bar{\sigma}}{\bf m}|)}\times\nonumber\\
 & \quad \frac{(2\pi\sqrt{-1})^ne^{-\pi\sqrt{-1}\left(|A^{-1}_\sigma c|+|A_\sigma^{-1}A_{\barsigma}{\bf m}|\right)}}{\Gamma({\bf 1}_\sigma-A_\sigma^{-1}(c+A_{\barsigma}{\bf m}))\Gamma(|A_\sigma^{-1}{\bf c}|+|A_\sigma^{-1}A_{\bar{\sigma}}{\bf m}|)}\\
 &=z_\sigma^{-A_\sigma^{-1}{\bf c}}
\sum_{{\bf m}\in\mathbb{Z}_{\geq 0}^{\bar{\sigma}}}\frac{(z_\sigma^{-A_\sigma^{-1}A_{\bar{\sigma}}}z_{\bar{\sigma}})^{\bf m}}{\Gamma({\bf 1}_\sigma-A_\sigma^{-1}({\bf c}+A_{\bar{\sigma}}{\bf m})){\bf m!}}
(1-e^{-2\pi\sqrt{-1}|A_\sigma^{-1}({\bf c}+A_{\bar{\sigma}}{\bf m})|})\\
 &=\sum_{i=1}^r(1-e^{-2\pi\sqrt{-1}|A_\sigma^{-1}({\bf c}+A_{\bar{\sigma}}{\bf k}(i))|})\varphi_{\s,{\bf k}(i)}(z).
\end{align}
We denote $\Gamma_{\sigma,0}$ the integration cycle defined by the relation $\Gamma_{\sigma,0}=p^*\gamma$. Precisely speaking our cycle $\Gamma_{\s,0}$ is non-compact so we have to proceed with some care to justify the argument above. Consider the $\rho$ coordinate as a level function $\rho:\T^n_x\ni x\mapsto \displaystyle\sum_{i\in\s}z_ix^{{\bf a}(i)}\in\C.$ We compactify $\T^n_x$ to a smooth projective variety $X$ so that $\rho$ can be prolonged to a meromorphic function, i.e., so that we have a commutative diagram
\begin{equation}
\xymatrix{
& \T^n_x \ar[r]^{\rho} \ar[d]_{\rm inclusion}&\C \ar[d]^{\rm inclusion}\\
& X   \ar[r]^{\tilde{\rho}}                             &\mathbb{P}^1.}
\end{equation}
By (a corollary of) Thom-Mather's 1st isotopy lemma (\cite{Ver}, (5.1) Corollaire), perturbing the Hankel contour, we can assume that $\rho$ is a trivial fiber bundle when restricted to $\Gamma_0$. Now, take a point $0<\rho_0\in\Gamma_0$. By rescalling, we may assume $\rho_0=1.$ Consider the restricted covering 
\begin{equation}
p:\left\{\sum_{i\in\s}z_ix^{{\bf a}(i)}=1\right\}\setminus\bigcup_{i\in\s}\{ z_ix^{{\bf a}(i)}=0\}\rightarrow\left\{\sum_{i\in\s}u_i=1\right\}\setminus\bigcup_{i\in\s}\{ u_i=0\}.
\end{equation} Since Pochhammer cycle $P_{u_\s}$ belongs to the homology group 
\begin{equation}
\Homo_{n-1}\left(\left\{\sum_{i\in\s}u_i=1\right\}\setminus\bigcup_{i\in\s}\{ u_i=0\};\underline{\C} u_\s^{A_\s^{-1}c}\right),
\end{equation} we can define its pull-back 
\begin{equation}
p^*(P_{u_\s})\in\Homo_{n-1}\left(\left\{\sum_{i\in\s}z_ix^{{\bf a}(i)}=1\right\}\setminus\bigcup_{i\in\s}\{ z_ix^{{\bf a}(i)}=0\};\underline{\C}x^c\right).
\end{equation}
Since $\rho$ is a trivial fiber bundle over $\Gamma_0$, we can prolong the lifted cycle $p^*(P_{u_\s})$ along $\Gamma_0$, which yields to the precise definition of our integration cycle $\Gamma_{\s,0}$. We can summerize the discussion above as a proposition.

\begin{prop}
Fix a subset $\sigma\subset\{ 1,\cdots ,N \}$ such that $|\sigma|=n$, $\det A_\sigma\neq 0$, and $s_j=|A_\sigma^{-1}{\bf a}(j)|\leq 1$ for any $j\in\barsigma$. Then, 
\begin{equation}\label{fundamentalformula1}
f_{\sigma,0}(z)\overset{def}{=}\frac{1}{(2\pi\sqrt{-1})^{n+1}}\int_{\Gamma_{\sigma,0}} e^{h_z(x)}x^{\bf c-1}dx\end{equation}
is absolutely convergent and for any parameter $c\in\C^{n\times 1},$ and we have an equality
\begin{equation}
f_{\sigma,0}(z)=\sum_{i=1}^r\Big(1-e^{-2\pi\sqrt{-1}|A_\sigma^{-1}({\bf c}+A_{\bar{\sigma}}{\bf k}(i))|}\Big)\varphi_{\s,{\bf k}(i)}(z).
\end{equation}
\end{prop}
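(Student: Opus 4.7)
The plan is to verify both claims—absolute convergence and the expansion into $\Gamma$-series—by carrying out the changes of variables announced just before the statement and integrating term-by-term. First I will pass to the coordinates $\xi_\sigma = z_\sigma x^{A_\sigma}$ (which turns the covering map $p$ into a standard toric cover of degree $|\det A_\sigma|$) and then to plane-wave coordinates $\xi_i = \rho u_i$ on the slice $\{\sum_{i\in\sigma} u_i = 1\}$. Since $\Gamma_{\sigma,0} = p^\ast \gamma$ with $\gamma = \Gamma_0 \times P_{u_\sigma}$, the integral factors as a one-dimensional Hankel integral in $\rho$ against a Pochhammer integral in $u_\sigma$, and the pull-back lemma proven earlier supplies the factor $|\det A_\sigma|$ needed to pass between $dx/x$ and $d\xi_\sigma/\xi_\sigma$.

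Next I will handle absolute convergence. The decay of $e^{\rho}$ on the two horizontal rays of the Hankel contour controls the behaviour at $\rho \to -\infty$, while the compactness of $P_{u_\sigma}$ and of the loop portion of $\Gamma_0$ confines $u_\sigma$ and $\rho$ to a bounded region away from the divisors $\{u_i = 0\}$ and $\rho = 0$. The key hypothesis $s_j = |A_\sigma^{-1} \mathbf{a}(j)| \le 1$ ensures that the exponents $\rho^{s_j}$ in the Taylor expansion of the $\bar\sigma$-part of $\exp(h_z(x))$ do not grow faster than $\rho$ itself, so for $z \in U_\sigma$ the Taylor series in $(z_\sigma^{-A_\sigma^{-1}\mathbf{a}(j)} z_j)_{j \in \bar\sigma}$ converges term-by-term integrably; this justifies exchanging summation and integration over $\gamma$.

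After the swap, each term in the sum over $\mathbf{m} \in \mathbb{Z}_{\ge 0}^{\bar\sigma}$ factors into a Hankel integral in $\rho$ of type $\int_{\Gamma_0} e^\rho \rho^{|A_\sigma^{-1}(c + A_{\bar\sigma}\mathbf{m})|-1}\,d\rho$ and a Pochhammer integral in $u_\sigma$ with exponents $A_\sigma^{-1}(c + A_{\bar\sigma}\mathbf{m}) - \mathbf{1}_\sigma$. Applying \cref{lemma:lemma} to the former and the rewritten Dirichlet formula (\ref{ConvenientFormula}) to the latter produces a prefactor
\[
\frac{(2\pi\sqrt{-1})^{n+1} e^{-\pi\sqrt{-1}\,|A_\sigma^{-1}(c+A_{\bar\sigma}\mathbf{m})|}}{\det A_\sigma \cdot \Gamma(\mathbf{1}_\sigma - A_\sigma^{-1}(c + A_{\bar\sigma}\mathbf{m}))\, \Gamma(1-|A_\sigma^{-1}(c+A_{\bar\sigma}\mathbf{m})|)\,\Gamma(|A_\sigma^{-1}(c+A_{\bar\sigma}\mathbf{m})|)},
\]
and the last two $\Gamma$-factors in the denominator are collapsed by the reflection formula (\ref{ReflectionFormula}) into $2\pi\sqrt{-1}\,(1 - e^{-2\pi\sqrt{-1}\,|A_\sigma^{-1}(c+A_{\bar\sigma}\mathbf{m})|})^{-1} \cdot e^{-\pi\sqrt{-1}\,|A_\sigma^{-1}(c+A_{\bar\sigma}\mathbf{m})|}$, leaving exactly one Gamma factor matching the $\Gamma$-series (\ref{seriesphi}) and one twist factor $1 - e^{-2\pi\sqrt{-1}\,|A_\sigma^{-1}(c+A_{\bar\sigma}\mathbf{m})|}$. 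Finally, I will partition the summation over $\mathbf{m}$ using \cref{lambdak} and the second lemma of \S\ref{sectionseries}, namely $\mathbb{Z}_{\ge 0}^{\bar\sigma} = \bigsqcup_{i=1}^r \Lambda_{\mathbf{k}(i)}$, and observe that within each piece $\Lambda_{\mathbf{k}(i)}$ the twist factor depends only on $i$ (since $|A_\sigma^{-1} A_{\bar\sigma}\mathbf{m}| \in \mathbb{Z}$ for $\mathbf{m}-\mathbf{k}(i) \in \Lambda_{\mathbf{k}(i)} - \mathbf{k}(i)$), pulling it out to give the claimed formula.

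The most delicate step is not the bookkeeping of Gamma factors but the precise definition of the non-compact pulled-back cycle $\Gamma_{\sigma,0}$ used to justify convergence and the Fubini step, which is why the author inserts the Thom–Mather trivialization: I will invoke it to choose a perturbation of $\Gamma_0$ over which the level function $\rho$ is a trivial fibration, pull back $P_{u_\sigma}$ along a single fibre $\{\rho = 1\}$ as a class in the twisted homology of $\{\sum_{i \in \sigma} z_i x^{\mathbf{a}(i)} = 1\} \setminus \bigcup_{i \in \sigma} \{z_i x^{\mathbf{a}(i)} = 0\}$ with coefficients in $\underline{\C}\, x^c$, and then propagate this class along $\Gamma_0$ to obtain a well-defined $(n+1)$-cycle supporting the integrand $e^{h_z(x)} x^{c-\mathbf{1}}\,dx$; this legitimizes both the product-form computation and the absolute-convergence statement.
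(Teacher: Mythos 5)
Your proposal follows the paper's own derivation step for step: the covering change of coordinates $\xi_\sigma = z_\sigma x^{A_\sigma}$, the plane-wave coordinates $\xi_i=\rho u_i$, the choice $\gamma = \Gamma_0 \times P_{u_\sigma}$, the term-by-term exchange of sum and integral justified by $s_j\le 1$ on $U_\sigma$, the evaluation via \cref{lemma:lemma} and (\ref{ConvenientFormula}), the collapse of the two extra $\Gamma$-factors by (\ref{ReflectionFormula}), the regrouping over $\mathbb{Z}_{\ge 0}^{\bar\sigma} = \bigsqcup_i \Lambda_{\mathbf{k}(i)}$, and the Thom--Mather argument to make the non-compact pulled-back cycle precise. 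The proof is correct and takes essentially the same approach as the paper.
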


Now, let us construct $r=|\Z^n/\Z A_\sigma|$ linearly independent cycles for Laplace integral representation of GKZ hypergeometric function (\ref{Laplace integral}). We are going to show that suitable deck transformations of the cycle above with respect to $p$ give $r$ linearly independent cycles. Let us take a vector $\tilde{\bf k}\in\Z^n$, and consider a deck transformation of $\Gamma_{\sigma,0}$ along $\tilde{\xi}_\sigma\mapsto e^{2\pi\sqrt{-1}\tilde{\bf k}}\tilde{\xi}_\sigma=(e^{2\pi\sqrt{-1}\tilde{k}_1}\tilde{\xi}_{1},\dots,e^{2\pi\sqrt{-1}\tilde{k}_n}\tilde{\xi}_{n}).$ We denote this cycle by $\Gamma_{\sigma,\tilde{\bf k}}$. From (\ref{fundamentalformula1}), (\ref{seriesphi}), and (\ref{lambdak}), we have 
\begin{align}
f_{\sigma,\tilde{\bf k}}(z)&\overset{def}{=}\frac{1}{(2\pi\ii)^{n+1}}\int_{\Gamma_{\sigma,\tilde{\bf k}}}e^{h_z(x)}x^{c-1}dx\\
 &=\frac{z_\sigma^{-A_\sigma^{-1}{\bf c}}}{(2\pi\sqrt{-1})^{n+1}}\int_{\Gamma_{0}\times P_{u_\sigma}}\exp\left\{\sum_{i\in\sigma}\xi_i+\sum_{j\in\bar{\sigma}}z_\sigma^{-A_\sigma^{-1}{\bf a}(j)}z_je^{2\pi\sqrt{-1}{}^t\tilde{\bf k}A_\sigma^{-1}{\bf a}(j)}\xi_\sigma^{A_\sigma^{-1}{\bf a}(j)}\right\}\times\nonumber\\
 & \quad e^{2\pi\sqrt{-1}{}^t\tilde{\bf k}A_\sigma^{-1}c}\xi_\sigma^{A_\sigma^{-1}{\bf c}-{\bf 1}}d\xi_\sigma\\
 &=e^{2\pi\sqrt{-1}{}^t\tilde{\bf k}A_\sigma^{-1}c}\sum_{i=1}^r\Big(1-e^{-2\pi\sqrt{-1}|A_\sigma^{-1}({\bf c}+A_{\bar{\sigma}}{\bf k}(i))|}\Big)e^{2\pi\sqrt{-1}{}^t\tilde{\bf k}A_\sigma^{-1}{\bf k}(i)}\varphi_{\s,{\bf k}(i)}(z).
\end{align}

\noindent
Under the assumption that 
$$|A_\sigma^{-1}({\bf c}+A_{\bar{\sigma}}{\bf k}(i))|\notin\mathbb{Z}\;\;(\forall i=1,\dots,r),$$
we are reduced to find suitable vectors $\tilde{\bf k}(1),\dots,\tilde{\bf k}(r)\in\Z^n$ such that the matrix 
\begin{equation}
\Big(e^{2\pi\sqrt{-1}{}^t\tilde{\bf k}(i)A_\sigma^{-1}{\bf k}(j)}\Big)_{i,j=1}^{r}
\end{equation}
is invertible in view of \cref{prop:independence}. The following observation is of fundamental importance though it is elementary.

\begin{lem}\label{lem:pairing}
The pairing $\langle\; ,\;\rangle:\mathbb{Z}^{n\times 1}/\mathbb{Z}{}^tA_\sigma\times\mathbb{Z}^{n\times 1}/\mathbb{Z}A_\sigma\rightarrow\mathbb{Q}/\mathbb{Z}$ defined by $\langle v,w\rangle={}^tvA_\sigma^{-1}w$ is a non-degenerate pairing of finite abelian groups, i.e., for any fixed $v\in\mathbb{Z}^{n\times 1}/\mathbb{Z}{}^tA_\sigma,$ if one has $<\langle v,w\rangle=0$ for all $w\in\mathbb{Z}^{n\times 1}/\mathbb{Z}A_\sigma,$ then we have $v=0$ and vice versa.
\end{lem}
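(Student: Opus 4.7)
The plan is to prove well-definedness first, then non-degeneracy from one side and invoke symmetry for the other. Since both quotient groups are finite of the same order $\lvert\det A_\sigma\rvert$, a one-sided non-degeneracy check already forces the pairing to be perfect.

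For well-definedness, I would check that replacing $v$ by $v + {}^tA_\sigma v'$ with $v' \in \Z^{n\times 1}$ changes ${}^tvA_\sigma^{-1}w$ by ${}^tv'A_\sigma A_\sigma^{-1}w = {}^tv'w \in \Z$, so the class in $\Q/\Z$ is unchanged; symmetrically for $w \mapsto w + A_\sigma w'$. Hence the pairing descends to the two finite quotients.

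For non-degeneracy, suppose $v \in \Z^{n\times 1}$ represents an element of $\Z^{n\times 1}/\Z{}^tA_\sigma$ in the left radical, i.e.\ ${}^tvA_\sigma^{-1}w \in \Z$ for every $w \in \Z^{n\times 1}$. Testing against $w = {\bf e}(j)$ for $j=1,\dots,n$ shows that every coordinate of the row vector ${}^tvA_\sigma^{-1}$ lies in $\Z$. Writing ${}^tvA_\sigma^{-1} = {}^tu$ with $u \in \Z^{n\times 1}$ gives $v = {}^tA_\sigma u$, so $v = 0$ in $\Z^{n\times 1}/\Z{}^tA_\sigma$. The right-sided statement follows verbatim after transposing (use ${}^tvA_\sigma^{-1}w = {}^tw\, {}^t(A_\sigma^{-1})v = {}^tw({}^tA_\sigma)^{-1}v$), or alternatively one notes that the left radical being trivial forces the map $\Z^{n\times 1}/\Z{}^tA_\sigma \to \Hom(\Z^{n\times 1}/\Z A_\sigma,\Q/\Z)$ to be injective between finite groups of the same cardinality $\lvert\det A_\sigma\rvert$, hence an isomorphism, whence the right radical vanishes as well.

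There is essentially no obstacle: the entire argument is formal once one recognises that testing against standard basis vectors converts the divisibility statement ${}^tvA_\sigma^{-1}\Z^{n\times 1} \subset \Z$ into the integrality of ${}^tvA_\sigma^{-1}$. The only point that deserves care is the bookkeeping between $A_\sigma$ and ${}^tA_\sigma$, which is why framing the two groups as $\Z^{n\times 1}/\Z{}^tA_\sigma$ and $\Z^{n\times 1}/\Z A_\sigma$ (and not identifying them prematurely) keeps the argument clean.
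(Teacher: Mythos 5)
Your proof is correct, and it takes a genuinely different (and arguably more elementary) route than the paper. The paper reduces to the Smith normal form of $A_\sigma$: there exist $P,Q\in GL(n,\Z)$ with $P^{-1}A_\sigma Q=\diag(1,\dots,1,d_1,\dots,d_l)$, after which both quotient groups become the same direct product of cyclic groups and the pairing becomes the standard one for a diagonal integer matrix, for which non-degeneracy is immediate. You instead argue directly: assuming ${}^tvA_\sigma^{-1}w\in\Z$ for every $w\in\Z^{n\times 1}$ and testing against the standard basis vectors $w={\bf e}(j)$ converts the hypothesis into the integrality of the row vector ${}^tvA_\sigma^{-1}$, which gives $v\in\Z{}^tA_\sigma$ without any normal-form machinery; the other side follows by transposition (or by your counting argument using $\lvert\det A_\sigma\rvert$). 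Both approaches are valid. The Smith normal form argument has the side benefit of explicitly identifying the quotient groups as products of cyclic groups (which makes the ``obvious'' step transparent and is also useful elsewhere), whereas your argument is entirely self-contained, requires no structure theory, and makes the well-definedness and both directions of non-degeneracy fall out of a single integrality calculation. One small remark: your bookkeeping of which variable lives in $\Z^{n\times 1}/\Z{}^tA_\sigma$ versus $\Z^{n\times 1}/\Z A_\sigma$ is exactly right, and is precisely the point where a careless identification of the two groups would cause confusion.
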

\noindent
The proof of \cref{lem:pairing} is quite elementary because we know that there are invertible integer matrices $P,Q\in GL(n,\mathbb{Z})$ such that $P^{-1}A_\sigma Q=diag(1,\cdots ,1,d_1,\cdots ,d_l)$ where $d_1,\cdots ,d_l\in\mathbb{Z}^\times$ satisfy $d_1|d_2,\cdots ,d_{l-1}|d_l,$ and the lemma is obvious when $A_\sigma$ is replaced by $P^{-1}A_\sigma Q.$ Thanks to this lemma, we have the following

\begin{prop}
$\mathbb{Z}^{n\times 1}/\mathbb{Z}{}^tA_\sigma\tilde{\rightarrow}\widehat{\mathbb{Z}^{n\times 1}/\mathbb{Z}A_\sigma}$, where the isomorphism is induced from the pairing $\langle\; ,\;\rangle.$
\end{prop}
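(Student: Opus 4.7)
The plan is to deduce the proposition directly from \cref{lem:pairing} together with a cardinality count. First, I would check that the assignment $v\mapsto \langle v,-\rangle$ defines a group homomorphism
\[
\Phi\colon \mathbb{Z}^{n\times 1}/\mathbb{Z}{}^tA_\sigma \longrightarrow \widehat{\mathbb{Z}^{n\times 1}/\mathbb{Z}A_\sigma}.
\]
Well-definedness is straightforward: if $v'=v+{}^tA_\sigma u$ for some $u\in\mathbb{Z}^{n\times 1}$, then ${}^tv'A_\sigma^{-1}w={}^tvA_\sigma^{-1}w+{}^tuw\equiv{}^tvA_\sigma^{-1}w\pmod{\mathbb{Z}}$, and the analogous computation works in the second variable. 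Bilinearity of the pairing is inherited from bilinearity of matrix multiplication, so $\langle v,-\rangle$ is indeed a character of $\mathbb{Z}^{n\times 1}/\mathbb{Z}A_\sigma$ and $\Phi$ is a group homomorphism.

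Second, the nondegeneracy assertion already proved in \cref{lem:pairing} says precisely that $\ker\Phi=0$, so $\Phi$ is injective. To upgrade injectivity to bijectivity I would compare orders: since $\det({}^tA_\sigma)=\det(A_\sigma)$, the two finite abelian groups $\mathbb{Z}^{n\times 1}/\mathbb{Z}{}^tA_\sigma$ and $\mathbb{Z}^{n\times 1}/\mathbb{Z}A_\sigma$ both have cardinality $|\det A_\sigma|$. Combined with the standard fact $|G|=|\widehat{G}|$ for any finite abelian group $G$, the source and target of $\Phi$ have the same finite cardinality, so the injection $\Phi$ must be an isomorphism.

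There is essentially no obstacle beyond what is already handled by \cref{lem:pairing}; the remaining work is purely formal. Should one prefer to avoid appealing to $|G|=|\widehat{G}|$, an alternative is to pass through the Smith normal form $P^{-1}A_\sigma Q=\diag(1,\dots,1,d_1,\dots,d_l)$ already mentioned in the discussion of \cref{lem:pairing}: in the diagonal case both groups become $\prod_i \mathbb{Z}/d_i\mathbb{Z}$ and the pairing becomes the obvious one $(a_i,b_i)\mapsto a_ib_i/d_i$, which is visibly a perfect pairing, and one transports the resulting isomorphism back through the unimodular changes of basis $P$ and $Q$.
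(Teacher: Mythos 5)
Your argument is correct and is essentially the paper's own: both deduce injectivity of $v\mapsto\langle v,-\rangle$ from \cref{lem:pairing} and then upgrade to an isomorphism by counting elements. You spell out the cardinality count ($|\det{}^tA_\sigma|=|\det A_\sigma|$ and $|G|=|\widehat{G}|$) more explicitly than the paper, which simply says ``counting the number of elements,'' but the route is the same; the Smith normal form alternative you mention is likewise the device the paper already invokes in its discussion of \cref{lem:pairing}.
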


\begin{proof}
Since we have a group embedding $\phi:\mathbb{Q}/\mathbb{Z}\hookrightarrow\mathbb{C}^{*}$ defined by 
\begin{equation}
\phi(\alpha)=e^{2\pi\sqrt{-1}\alpha}\;\; (\alpha\in\Q/\Z),
\end{equation}
\noindent
the pairing $\langle\; ,\;\rangle$ induces an embedding
\begin{equation}
\mathbb{Z}^{n\times 1}/\mathbb{Z}{}^tA_\sigma\tilde{\rightarrow}\Hom_{\mathbb{Z}}(\mathbb{Z}^{n\times 1}/\mathbb{Z}A_\sigma,\mathbb{Q}/\mathbb{Z})\hookrightarrow\widehat{\mathbb{Z}^{n\times 1}/\mathbb{Z}A_\sigma}.
\end{equation}
Counting the number of elements, we have the proposition.
\end{proof}
\noindent
Thus, if we take a complete system of representatives $\{\tilde{\bf k}(i)\}_{i=1}^r$ of $\Z^n/\Z {}^tA_\sigma$, the matrix 
\begin{equation}
\frac{1}{r}\left( e^{2\pi\sqrt{-1}{}^t\tilde{\bf k}(i)A_\sigma^{-1}A_{\overline{\sigma}}{\bf k}(j)} \right)
\end{equation}is a unitary matrix by the orthogonality of irreducible characters.

\begin{rem}
For any finite abelian group $G$, we have an isomorphism 
\begin{equation}
\Hom (G,\mathbb{Q}/\mathbb{Z})\simeq\Ext^1(G,\mathbb{Z}).
\end{equation}
The pairing of \cref{lem:pairing} is induced naturally from this isomorphism.
\end{rem}

\noindent
In sum, we reached the following

\begin{thm}\label{thm:fundamentalthm1}
Take a regular triangulation $T$ of $A$ such that for any $\sigma\in T,$ one has $s_j=|A_\sigma^{-1}{\bf a}(j)|\leq 1$ for all $j\in\barsigma$. Assume that the parameter vector $c$ is very generic with respect to any $\sigma\in T$ and for any $i=1,\dots,r$, one has $|A_\sigma^{-1}({\bf c}+A_{\bar{\sigma}}{\bf k}(i))|\notin\Z$. Then, if one puts
\begin{equation}
f_{\sigma,\tilde{\bf k}(j)}(z)=\frac{1}{(2\pi\ii)^{n+1}}\int_{\Gamma_{\sigma,\tilde{\bf k}(j)}}e^{h_z(x)}x^{c-1}dx,
\end{equation}
$\bigcup_{\s\in T}\{ f_{\sigma,\tilde{\bf k}(j)}(z)\}_{j=1}^{r}$ is a basis of solutions of $M_A(c)$ on the non-empty open set $U_T$, where $\{\tilde{\bf k}(j)\}_{j=1}^{r}$ is a complete system of representatives of $\Z^{n\times 1}/\Z{}^tA_\sigma$. Moreover, for each $\sigma\in T,$ one has a transformation formula 
\begin{equation}
\begin{pmatrix}
f_{\sigma,\tilde{\bf k}(1)}(z)\\
\vdots\\
f_{\sigma,\tilde{\bf k}(r)}(z)
\end{pmatrix}
=
T_\sigma
\begin{pmatrix}
\varphi_{\sigma,{\bf k}(1)}(z)\\
\vdots\\
\varphi_{\sigma,{\bf k}(r)}(z)
\end{pmatrix}.
\end{equation}
Here, $T_\sigma$ is an invertible $r\times r$ matrix given by 
\begin{equation}
T_\sigma=\diag\Big( e^{2\pi\sqrt{-1}{}^t\tilde{\bf k}(i)A_\sigma^{-1}c}\Big)_{i=1}^{r}\Big(e^{2\pi\sqrt{-1}{}^t\tilde{\bf k}(i)A_\sigma^{-1}{\bf k}(j)}\Big)_{i,j=1}^{r}\diag\Big(1-e^{-2\pi\sqrt{-1}|A_\sigma^{-1}({\bf c}+A_{\bar{\sigma}}{\bf k}(j))|}\Big)_{j=1}^{r}.
\end{equation}
\end{thm}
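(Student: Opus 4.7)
The plan is to exploit the explicit computation carried out immediately before the theorem, which already establishes, for arbitrary $\tilde{\bf k}\in\Z^n$, the identity
\[
f_{\sigma,\tilde{\bf k}}(z)=e^{2\pi\ii\,{}^t\tilde{\bf k}A_\sigma^{-1}c}\sum_{i=1}^r\Big(1-e^{-2\pi\ii|A_\sigma^{-1}(c+A_{\bs}{\bf k}(i))|}\Big)e^{2\pi\ii\,{}^t\tilde{\bf k}A_\sigma^{-1}{\bf k}(i)}\varphi_{\s,{\bf k}(i)}(z).
\]
Specialising $\tilde{\bf k}=\tilde{\bf k}(j)$ for $j=1,\dots,r$ and stacking these identities as rows produces exactly the factorisation of $T_\sigma$ claimed in the statement; this step is pure bookkeeping.

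The real content is to prove that $T_\sigma$ is invertible. It is a product of three factors. The left diagonal factor is obviously invertible. The right diagonal factor $\diag(1-e^{-2\pi\ii|A_\sigma^{-1}(c+A_{\bs}{\bf k}(j))|})_{j=1}^r$ is invertible thanks to the hypothesis $|A_\sigma^{-1}(c+A_{\bs}{\bf k}(j))|\notin\Z$. For the middle factor $(e^{2\pi\ii\,{}^t\tilde{\bf k}(i)A_\sigma^{-1}{\bf k}(j)})_{i,j=1}^r$ I would invoke the isomorphism $\Z^{n\times 1}/\Z\,{}^tA_\sigma\simeq\widehat{\Z^{n\times 1}/\Z A_\sigma}$ established above: since $\{\tilde{\bf k}(i)\}_{i=1}^r$ and $\{{\bf k}(j)\}_{j=1}^r$ are complete systems of representatives, these entries are precisely the values of the characters of the finite abelian group $\Z^n/\Z A_\sigma$, so the matrix is a character table and, up to the factor $1/\sqrt{r}$, is unitary by the orthogonality relations.

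To pass from one-simplex statements to the global basis, I would appeal to the result of Fern\'andez-Fern\'andez recalled at the end of \S\ref{sectionseries}: under the very genericity assumption on $c$, the family $\bigcup_{\s\in T}\{\varphi_{\s,{\bf k}(i)}\}_{i=1}^r$ is a basis of holomorphic solutions of $M_A(c)$ on $U_T$, its cardinality matching $\sum_{\s\in T}|\Z^n/\Z A_\s|=\vol_\Z(\Delta_A)=\rank M_A(c)$. For each fixed $\s\in T$, the invertibility of $T_\sigma$ shows that $\{f_{\s,\tilde{\bf k}(j)}\}_{j=1}^r$ and $\{\varphi_{\s,{\bf k}(i)}\}_{i=1}^r$ span the same $r$-dimensional subspace of solutions. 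Assembling over $\s\in T$ then upgrades this into the claimed global basis.

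The main technical obstacle I anticipate is not in the algebraic invertibility argument but in justifying that the formula for $f_{\s,\tilde{\bf k}(j)}(z)$ is valid on the whole domain $U_T$ rather than merely on $U_\s$: the series manipulations (Taylor expansion of the exponential, exchange of sum and integral, application of \cref{lemma:lemma} and (\ref{ConvenientFormula})) need uniform absolute convergence, which follows from $s_j\le 1$ and from the definition of $U_\s\supset U_T$, but some care is required to ensure that the deck transformation $\tilde\xi_\s\mapsto e^{2\pi\ii\tilde{\bf k}}\tilde\xi_\s$ does not leave the region of convergence. Once this uniformity is checked, the rest of the argument is a direct assembly of the ingredients above.
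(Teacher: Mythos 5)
Your proposal is correct and follows essentially the same path as the paper: specialise the deck-transform identity $f_{\s,\tilde{\bf k}} = e^{2\pi\ii{}^t\tilde{\bf k}A_\s^{-1}c}\sum_i (1-e^{-2\pi\ii|A_\s^{-1}(c+A_{\bs}{\bf k}(i))|})e^{2\pi\ii{}^t\tilde{\bf k}A_\s^{-1}A_{\bs}{\bf k}(i)}\varphi_{\s,{\bf k}(i)}$ to $\tilde{\bf k}=\tilde{\bf k}(j)$, read off the three-fold factorisation of $T_\s$, invert the middle factor by the non-degenerate pairing of \cref{lem:pairing} (viewed as a character matrix of $\Z^n/\Z A_\s$), invert the diagonal factors from the non-integrality hypothesis, and then assemble over $\s\in T$ via Fern\'andez-Fern\'andez's count $\rank M_A(c)=\vol_\Z\Delta_A=\sum_\s r$. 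One small point of hygiene you inherit from the paper's notation: the exponent in the middle factor should be ${}^t\tilde{\bf k}(i)A_\s^{-1}A_{\bs}{\bf k}(j)$ (since ${\bf k}(j)\in\Z^{\bs}$ and the pairing lives on $\Z^n/\Z A_\s$), which is what the character-matrix argument in the Remark preceding the theorem actually uses; and your worry about leaving the convergence region is unfounded because the deck transformation only rotates arguments and leaves all moduli, hence the domain $U_\s$, unchanged.
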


\begin{exa}
We consider a matrix 
\begin{equation}
A=
\Big({\bf a}(1)\mid{\bf a}(2)\mid{\bf a}(3)\Big)
=
\begin{pmatrix}
1&0&-1\\
0&2&3
\end{pmatrix},
\end{equation}and a parameter
$c=
\begin{pmatrix}
c_1\\
c_2
\end{pmatrix}
\in\mathbb{C}^{2\times 1}
$.
\begin{figure}[t]
\begin{center}
\begin{tikzpicture}
\draw (0,0) node[below left]{O}; 
\draw[thick, ->] (-1.5,0)--(3.2,0) node[right]{} ; 
\draw[thick, ->] (0,-1.2)--(0,3.2) node[above]{} ; 
\draw (0,0) -- (1,0) node[below]{\;\;\;\;\;\;\;\;\;\;${\bf a}(1)=(1,0)$}-- (0,2) node[right]{${\bf a}(2)=(0,2)$} -- (-1,3) node[left]{${\bf a}(3)=(-1,3)$} -- cycle;
\node at (0.4,0.65){$\sigma_1$};
\node at (-0.35,1.8){$\sigma_2$};
\end{tikzpicture}
\end{center}
\end{figure}
\noindent
GKZ system associated to $A$ and $c$ is

\begin{numcases}{M_A(c):}
(z_1\partial_1-z_3\partial_3+c_1) \cdot f(z)&$=0$  \\
(2z_2\partial_2+3z_3\partial_3+c_2) \cdot f(z)&$=0$ \\
(\partial_2^3-\partial_1^2\partial_3^2) \cdot f(z)&$=0.$ 
\end{numcases}
\noindent
For any generic parameter $c$, the rank of $M_A(c)$ is $\vol_\mathbb{Z}\Delta_A=4.$ If we take any real vector $\omega\in\{\omega\in\R^3\mid \omega_1-\frac{3}{2}\omega_2+\omega_3>0,-\frac{3}{2}\omega_2+\omega_3+\omega_1>0\},$ $T(\omega)$ does not depend on the choice of $\omega$ and it consists of two simplexes $\sigma_1=\{ 1,2\}$ and $\sigma_2=\{ 2,3\}.$ Moreover, for this triangulation, we can easily confirm that $|A_{\sigma_1}^{-1}{\bf a}(3)|<1$ and $|A_{\sigma_2}{\bf a}(1)|<1$. We only consider $\Gamma$-series associated to $\sigma_1.$ The vectors $v^{k}$ $(k\in\Z)$ associated to $\sigma_1$ are given by 
\begin{equation}
v^{k}=
\begin{pmatrix}
-k\\
\frac{3}{2}k\\
k
\end{pmatrix}.
\end{equation}
Thus, $\Gamma$-series associated to $\sigma_1$ are

\begin{equation}
\varphi_{v^k}(z)=z_1^{-c_1}z_2^{-\frac{c_2}{2}}\displaystyle\sum_{m\in\Lambda_k}\frac{(z_1z_2^{-\frac{3}{2}}z_3)^m}{\Gamma(1-c_1+m)\Gamma(1-\frac{c_2}{2}-\frac{3}{2}m)m!},
\end{equation}
where
\begin{equation}
\Lambda_k=
\begin{cases}
2\mathbb{Z}_{\geq 0} & (k\equiv 0\mod 2)\\
2\mathbb{Z}_{\geq 0}+1 & (k\equiv 1\mod 2).
\end{cases}
\end{equation}
Note that these series are convergent on $(\C^*)^3.$ Next, we consider the Laplace integral representation
\begin{equation}\label{LapEx}
f(z)=\frac{1}{(2\pi\sqrt{-1})^3}\int_\Gamma e^{z_1x_1+z_2x_2^2+z_3x_1^{-1}x_2^2}x_1^{c_1-1}x_2^{c_2-1}dx_1\wedge dx_2.
\end{equation}
As before, we consider the covering transform associated to $\sigma_1$ given by
\begin{equation}
\xi_1=z_1x_1,\;\xi_2=z_2x_2^2.
\end{equation}
Applying the change of coordinate 
\begin{equation}
\xi_i=\rho u_i,\;\;u_1+u_2=1\;\;(i=1,2),
\end{equation}
we have
\begin{equation}\label{ex1}
f(z)=\frac{z_1^{-c_1}z_2^{-\frac{c_2}{2}}}{(2\pi\sqrt{-1})^3}\displaystyle\int_{\gamma} \exp\left\{ \rho+(z_1z_2^{-\frac{3}{2}}z_3)u_1^{-1}u_2^{\frac{3}{2}}\rho^{\frac{1}{2}}\right\}\rho^{c_1+\frac{c_2}{2}-1}u_1^{c_1-1}u_2^{\frac{c_2}{2}-1}d\rho du
\end{equation}
If we take $\gamma$ as a direct product $\Gamma_{0,1}\times P_1,$ the last integral $(\ref{ex1})$ is convergent and is equal to
\begin{equation}
(1-e^{-2\pi\sqrt{-1}(c_1+\frac{c_2}{2})})\varphi_{v^0}+(1+e^{-2\pi\sqrt{-1}(c_1+\frac{c_2}{2})})\varphi_{v^1}.
\end{equation}
As before, we denote $\Gamma_{\sigma_1,0}$ the pull back of $\gamma$ in $x$-space. Relation between $(\ref{LapEx})$ and $\Gamma$-series is given by
\begin{equation}
\begin{pmatrix}
f_{\sigma_1,0}(z)\\
f_{\sigma_1,1}(z)
\end{pmatrix}
=
\begin{pmatrix}
1&0\\
0&e^{\pi\sqrt{-1}c_2}
\end{pmatrix}
\begin{pmatrix}
1&1\\
1&-1
\end{pmatrix}
\begin{pmatrix}
(1-e^{-2\pi\sqrt{-1}(c_1+\frac{c_2}{2})})&0\\
0&(1+e^{-2\pi\sqrt{-1}(c_1+\frac{c_2}{2})})
\end{pmatrix}
\begin{pmatrix}
\varphi_{\sigma_1,0}(z)\\
\varphi_{\sigma_1,1}(z)
\end{pmatrix}.
\end{equation}
\end{exa}

\end{section}

\begin{section}{Construction of integration contours I\hspace{-.1em}I: Residue integral representations}\label{SectionResidue}


\noindent
In this section, we consider the following integral representation and relate it to series solutions in such a way analogous to \S\ref{SectionLaplace}:
\begin{equation}\label{RegResInt}
f(z)=\int\frac{y^{\gamma -1}x^{c-1}}{(1-y_1h_{1,z^{(1)}}(x))\cdots (1-y_kh_{k,z^{(k)}}(x))}dydx,
\end{equation}
where $h_{l,z^{(l)}}(x)$ are Laurent polynomials. The reason we treat Residue integrals before we treat Euler integrals lies on the fact that the construction of integration contours for Residue integrals turns out to be more symmetric than that for Euler integrals, as we shall see in \S\ref{SectionEuler}. Note first that Residue integral is indeed a solution of GKZ system. We introduce an $n\times N_l$ matrix $A_l=({\bf a}^{(l)}(1)\mid \cdots\mid {\bf a}^{(l)}(N_l))$, and an $(n+k)\times N$ $(N=N_1+\dots+N_k)$ matrix
\begin{equation}
A
=
\left(
\begin{array}{ccc|ccc|c|ccc}
1&\cdots&1&0&\cdots&0&\cdots&0&\cdots&0\\
\hline
0&\cdots&0&1&\cdots&1&\cdots&0&\cdots&0\\
\hline
 &\vdots& & &\vdots& &\ddots& &\vdots& \\
\hline
0&\cdots&0&0&\cdots&0&\cdots&1&\cdots&1\\
\hline
 &A_1& & &A_2& &\cdots & &A_k& 
\end{array}
\right).
\end{equation}
Putting $d=\begin{pmatrix}\gamma\\ c\end{pmatrix}$, we obtain a basic
\begin{prop}\label{prop:ResIntIsSol}
(\ref{RegResInt}) is a solution of $M_A(d).$
\end{prop}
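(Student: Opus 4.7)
The plan is to verify directly both families of equations defining $M_A(d)$: the Euler equations (\ref{EulerEq}) for each of the $n+k$ rows of $A$, and the box equations (\ref{ultrahyperbolic}) for each $u \in L_A$. By the block structure of $A$, the Euler equations split into two groups (the top $k$ rows, pairing with $\gamma$, and the bottom $n$ rows, pairing with $c$), which I would treat by parallel but slightly different computations. The box equations will follow by a direct calculation under the integral sign.

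For the top $k$ Euler operators $E_l = \sum_{j=1}^{N_l} z_j^{(l)}\partial_{z_j^{(l)}} + \gamma_l$, the key input is that $h_{l,z^{(l)}}(x)$ is linear in $z^{(l)}$ and that only the $l$-th denominator factor of the integrand depends on $z^{(l)}$. This yields
\begin{equation*}
\sum_{j=1}^{N_l} z_j^{(l)} \partial_{z_j^{(l)}} \frac{1}{1-y_l h_{l,z^{(l)}}(x)} = \frac{y_l h_{l,z^{(l)}}(x)}{(1-y_l h_{l,z^{(l)}}(x))^2} = y_l \partial_{y_l} \frac{1}{1-y_l h_{l,z^{(l)}}(x)},
\end{equation*}
and integration by parts in $y_l$ against $y_l^{\gamma_l - 1}$ converts the action of $\sum_j z_j^{(l)}\partial_{z_j^{(l)}}$ on $f$ into multiplication by $-\gamma_l$, giving $E_l f = 0$. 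For the lower $n$ rows, $E_{k+i} = \sum_{l,j} a_i^{(l)}(j)\, z_j^{(l)}\partial_{z_j^{(l)}} + c_i$, the analogous identity is $\sum_{l,j} a_i^{(l)}(j)\, z_j^{(l)} \partial_{z_j^{(l)}} F = x_i \partial_{x_i} F$ with $F = \prod_l (1-y_l h_{l,z^{(l)}})^{-1}$, which follows from $\sum_j a_i^{(l)}(j)\, z_j^{(l)} x^{{\bf a}^{(l)}(j)} = x_i \partial_{x_i} h_{l,z^{(l)}}(x)$. Integration by parts in $x_i$ against $x_i^{c_i - 1}$ then produces $E_{k+i} f = 0$.

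For the box equations, I would compute $\partial^{u^\pm}$ of the integrand in closed form via the inductive formula
\begin{equation*}
\prod_{j=1}^{N_l} \partial_{z_j^{(l)}}^{v_j} \frac{1}{1-y_l h_{l,z^{(l)}}(x)} = \frac{|v|!\, y_l^{|v|}\, x^{A_l v}}{(1-y_l h_{l,z^{(l)}}(x))^{1+|v|}} \qquad (v \in \mathbb{Z}_{\geq 0}^{N_l}),
\end{equation*}
applied block by block to the $k$ independent denominator factors. Writing $u = u^+ - u^-$ and $u^{(l),\pm}$ for the restriction of $u^{\pm}$ to the $l$-th block, the relation $u \in L_A$ unpacks into $|u^{(l),+}| = |u^{(l),-}|$ for each $l$ (from the upper block of $A$) together with $\sum_l A_l u^{(l),+} = \sum_l A_l u^{(l),-}$ (from the lower block). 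These are precisely the identities needed to make the integrands of $\partial^{u^+} f$ and $\partial^{u^-} f$ coincide pointwise, so $\Box_u f = 0$.

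The main subtlety will be justifying the integration-by-parts steps, i.e., ensuring that the boundary terms vanish on the cycle $\Gamma$. For the Pochhammer-type cycles to be constructed later in this section this is automatic; more generally the proposition should be read either for a specific admissible $\Gamma$ or at the level of the formal annihilator ideal of the integrand.
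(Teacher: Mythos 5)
Your proposal is correct, and for the box equations $\Box_u f = 0$ it is essentially identical to the paper's argument: compute $\partial^{u_\pm}$ in closed form block by block, then use $|u^{(l)}_+|=|u^{(l)}_-|$ (from the top $k$ rows of $A$) and $\sum_l A_l u^{(l)}_+=\sum_l A_l u^{(l)}_-$ (from the bottom $n$ rows) to identify the two integrands pointwise.

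For the Euler equations your route genuinely differs from the paper's. You convert each Euler operator into a vector field in the integration variables --- $\sum_j z_j^{(l)}\partial_{z_j^{(l)}}$ into $y_l\partial_{y_l}$ and the weighted combinations into $x_i\partial_{x_i}$ --- and then integrate by parts against $y_l^{\gamma_l-1}$ and $x_i^{c_i-1}$ to produce the scalars $-\gamma_l$ and $-c_i$. The paper instead establishes the torus homogeneity $f((\tau,t)^A z)=\tau^{-\gamma}t^{-c}f(z)$ by a change of the integration variables $(y,x)\mapsto(\tau^{-1}y,\,t^{-1}x)$, and differentiates at $(\tau,t)=(\mathbf{1},\mathbf{1})$. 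The two are infinitesimally equivalent (your vector-field identity is exactly the infinitesimal generator of that torus action), but they impose slightly different hypotheses on $\Gamma$: your version requires that integration by parts leave no boundary term, whereas the paper's version requires that $\Gamma$ be invariant, up to homology in the relevant local system, under the torus reparametrisation. Both requirements are automatic for the cycles actually used in \S\ref{SectionResidue}, and you flag this correctly at the end. Your version is arguably more elementary and stays at the level of the integrand; the paper's version packages the whole family of Euler equations at once into a single equivariance statement, which is why the paper phrases it that way.
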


\begin{proof}
First, let us observe that (\ref{RegResInt}) satisfies (\ref{EulerEq}). We have, for any $(\tau,t)\in(\mathbb{C}^*)^k\times (\mathbb{C}^*)^n$,
\begin{align}
f((\tau, t)^Az)&=\int\frac{y^{\gamma -1}x^{c-1}}{(1-y_1h_{1,\tau_1t^{A_1}z^{(1)}}(x))\cdots (1-y_kh_{k,\tau_kt^{A_k}z^{(k)}}(x))}dydx \\
 &=\int\frac{y^{\gamma -1}x^{c-1}}{(1-\tau_1y_1h_{1,z^{(1)}}(t\cdot x))\cdots (1-\tau_k y_kh_{k,z^{(k)}}(t\cdot x))}dydx\\
 &=\tau^{-\gamma}t^{-c}\int\frac{y^{\gamma -1}x^{c-1}}{(1-y_1h_{1,z^{(1)}}(x))\cdots (1-y_kh_{k,z^{(k)}}(x))}dydx.
\end{align}
Applying $(\frac{\partial}{\partial\tau_l})\restriction_{(\tau,t)=({\bf 1},{\bf 1})}$ or $(\frac{\partial}{\partial t_i})\restriction_{(\tau,t)=({\bf 1},{\bf 1})}$ to both sides, we obtain (\ref{EulerEq}).

Next, we verify (\ref{ultrahyperbolic}). For any $u\in \Z^N\simeq\Z^{N_1}\oplus\dots\oplus\Z^{N_k}$, we decompose it as a sum  $u=u^{(1)}+\cdots+u^{(k)}$ so that $u_l\in\Z^{N_l}$. Then, for any $u\in\Z_{\geq 0}^N$, we have 
\begin{equation}
\partial^uf(z)=|u^{(1)}|!\cdots |u^{(k)}|!\int \frac{y_1^{\gamma_1+|u^{(1)}|-1}\cdots y_k^{\gamma_k+|u^{(k)}|-1}x^{c+A_1u^{(1)}+\cdots+A_ku^{(k)} -1}}{(1-y_1h_{1,z^{(1)}}(x))^{|u^{(1)}|+1}\cdots (1-y_kh_{k,z^{(k)}}(x))^{|u^{(k)}|+1}}dydx.
\end{equation}
By the definition of $A$, if we denote by $\left\{ 
\begin{pmatrix}
{\bf e}_1\\
\hline 
O
\end{pmatrix},\cdots,
\begin{pmatrix}
{\bf e}_k\\
\hline 
O
\end{pmatrix},
\begin{pmatrix}
O\\
\hline
{\bf e}_1
\end{pmatrix},\cdots, 
\begin{pmatrix}
0\\
\hline
{\bf e}_n
\end{pmatrix}\right\}$ the standard basis of $\mathbb{Z}^{k\times 1}\times\mathbb{Z}^{n\times 1},$ we have, for any $u\in L_A,$ 
\begin{equation}
0=\transp{
\begin{pmatrix}
{\bf e}_l\\
\hline 
O
\end{pmatrix}
}
\cdot Au=|u^{(l)}|.
\end{equation}
This also implies an equality
\begin{equation}
|u_+^{(l)}|=|u_-^{(l)}|.
\end{equation}
Since $\displaystyle\sum_{l=1}^kA_lu_+^{(l)}=\displaystyle\sum_{l=1}^kA_lu_-^{(l)}$ for any $u\in L_A$, we have
\begin{equation}
\partial^{u_+}f(z)=\partial^{u_-}f(z)
\end{equation}
for any $u\in L_A$.
\end{proof}

Now, we are going to construct linearly independent cycles which correspond to series solutions as in \S\ref{SectionLaplace}. For this purpose, let us fix any $n+k$ simplex $\sigma\subset\{ 1,\dots,N\}$ and put $\tilde{x}=(y,x)$. From the form of the matrix $A$, we automatically have $s_j\leq 1\;(j\in\bs)$ unlike \S\ref{SectionLaplace}. Then, we put
\begin{equation}
f_{\s,0}(z)=\frac{1}{(2\pi\sqrt{-1})^{n+2k}}\int_{\Gamma_{\s,0}}\frac{\tilde{x}^{d-1}}{(1-\tilde{h}_{1,z^{(1)}}(\tilde{x}))\cdots (1-\tilde{h}_{k,z^{(k)}}(\tilde{x}))}d\tilde{x},
\end{equation}
where, by abuse of notation, $\tilde{h}_{l,z^{(l)}}(\tilde{x})$ denotes a Laurent polynomial

\begin{equation}
\tilde{h}_{l,z^{(l)}}(\tilde{x})=y_l\sum_{j=1}^{N_l}z_{j}^{(l)}x^{{\bf a}^{(l)}(j)}=\sum_{j\in I_l}z_{j}\tilde{x}^{{\bf a}(j)},
\end{equation}
where $I_l$ is a set of indices, and $\Gamma_{\s,0}$ is a cycle to be specified later. As in  \S\ref{SectionLaplace}, consider a covering map
\begin{equation}
\mathbb{T}^{n+k}_{\tilde{x}}\overset{p}{\rightarrow}\mathbb{T}^{\s}_{\xi_\sigma}
\end{equation}
given by
\begin{equation}
\tilde{x}\mapsto \xi_\sigma=z_\s\tilde{x}^{A_\sigma}.
\end{equation}
\noindent
Then, putting $\sigma^{(l)}=\sigma\cap I_l,$ and $\bs^{(l)}=I_l\setminus\s^{(l)}$ and assuming $\Gamma_{\s,0}=p^*\gamma$ for some $\gamma\in\Homo_{n+k}(\T^\s_{\xi_\s};\C\xi_\s^{A_\s^{-1}d})$, we have 

\begin{equation}
f_{\s,0}(z)=\frac{z_\sigma^{-A_\sigma^{-1}d}}{(2\pi\sqrt{-1})^{n+2k}}\int_\gamma
\frac{\xi_\sigma^{A_\sigma^{-1}d-{\bf 1}}}
{
\displaystyle\prod_{l=1}^k
\left(
1-\sum_{i\in\sigma^{(l)}}\xi_i-\sum_{j\in\bar{\sigma}^{(l)}}(z_\sigma^{-A_\sigma^{-1}{\bf a}(j)}z_j)\xi_\sigma^{A_\sigma^{-1}{\bf a}(j)}
\right)
}d\xi_\sigma\label{res1}.
\end{equation}
\noindent
Note that the integral well-defined and convergent if $z\in U_\s$. Let us take our integration contour $\gamma$ as a product form:
\begin{equation}
\gamma=P_\sigma\overset{\rm def}{=}\displaystyle\prod_{l=1}^kP_{\sigma^{(l)}},
\end{equation}
\noindent
where $P_{\s^{(l)}}$ is a Pochhammer cycle in $\T^{\s^{(l)}}_{\xi_{\s^{(l)}}}$. Substituting the expansion

\begin{equation}
\frac{1}
{
\displaystyle
\left(
1-\sum_{i\in\sigma^{(l)}}\xi_i-\sum_{j\in\bar{\sigma}^{(l)}}(z_\sigma^{-A_\sigma^{-1}{\bf a}(j)}z_j)\xi_\sigma^{A_\sigma^{-1}{\bf a}(j)}
\right)
}
=
\sum_{{\bf m}_l\in\mathbb{Z}^{\bar{\sigma}_l}_{\geq 0}}
\frac{|{\bf m}_l|!}{{\bf m}_l!}
\frac{
\xi_\sigma^{A_\sigma^{-1}A_{\bar{\sigma}^{(l)}}{\bf m}_l}
(z_\sigma^{-A_\sigma^{-1}A_{\bar{\sigma}^{(l)}}}z_{\bar{\sigma}^{(l)}})^{{\bf m}_l}
}
{
\left(
\displaystyle
1-\sum_{i\in\sigma^{(l)}}\xi_i
\right)^{|{\bf m}_l|+1}
},
\end{equation}
into (\ref{res1}), we can conclude that our integral $f(z)$ is given by the formula

\begin{equation}
f(z)=\frac{z_\sigma^{-A_\sigma^{-1}d}}{(2\pi\sqrt{-1})^{n+2k}}\displaystyle\sum_{{\bf m}\in\Z_{\geq 0}^{\barsigma}}\frac{|{\bf m}_1|!\cdots|{\bf m}_k|!}{{\bf m}!}\prod_{l=1}^k{\rm I}_l({\bf m}_l)(z_\sigma^{-A_\sigma^{-1}A_{\bar{\sigma}}}z_{\bar{\sigma}})^{\bf m},
\end{equation}
where ${\rm I}_l({\bf m}_l)$ is given by the formula

\begin{equation}
{\bf I}_l({\bf m}_l)=
\int_{P_{\sigma^{(l)}}}
\frac{
\displaystyle
\prod_{i\in\sigma^{(l)}}
\xi_i^{{}^t{\bf e}_iA_\sigma^{-1}(d+A_{\bar{\sigma}}{\bf m})-1}
}
{
\left(
\displaystyle
1-\sum_{i\in\sigma^{(l)}}\xi_i
\right)^{|{\bf m}_l|+1}
}
d\xi_{\sigma^{(l)}}.
\end{equation}
Note that $\tilde{\bf e}_i$ above is the standard basis vector of $\mathbb{Z}^\sigma\simeq\mathbb{Z}^{(k+n)\times 1}.$ A direct computation employing (\ref{ConvenientFormula}) yields to

\begin{equation}
I_l({\bf m}_l)=
\frac{
(2\pi\sqrt{-1})^{|\sigma^{(l)}|+1}\exp\left\{-\pi\sqrt{-1}\Bigg(\displaystyle\sum_{i\in\sigma^{(l)}}{}^t{\bf e}_iA_\sigma^{-1}({\bf d}+A_{\bar{\sigma}}{\bf m})-|{\bf m}_l|\Bigg)\right\}
}
{
\displaystyle{ \prod_{i\in\sigma^{(l)}}\Gamma\Big(1-{}^t{\bf e}_iA_\sigma^{-1}({\bf d}+A_{\bar{\sigma}}{\bf m})\Big)\Gamma(1+|{\bf m}_l|)\displaystyle\Gamma\Bigg(\sum_{i\in\sigma^{(l)}}{}^t{\bf e}_iA_\sigma^{-1}({\bf d}+A_{\bar{\sigma}}{\bf m})-|{\bf m}_l|\Bigg)
}
}
\end{equation}
Now, let us prove the following

\begin{lem}\label{lem:sum}
For any $j\in\bar{\sigma}$, one has

\begin{equation}
\sum_{i\in\sigma^{(l)}}{}^t{\bf e}_iA_\sigma^{-1}{\bf a}(j)=
\begin{cases}
1\; (j\in\bar{\sigma}^{(l)})\\
0\; (j\notin\bar{\sigma}^{(l)}).
\end{cases}
\end{equation}
\end{lem}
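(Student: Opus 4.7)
The plan is to reduce the claimed identity to a direct calculation using the block structure of the matrix $A$. The crucial observation is that the top $k$ rows of $A$ are \emph{indicator rows}: the $(l,j)$-entry of $A$ equals $1$ if $j \in I_l$ and $0$ otherwise. This is exactly how $A$ was defined at the start of the section. In particular, for $i\in\sigma$, the $l$-th coordinate of ${\bf a}(i)$ is $1$ precisely when $i \in I_l \cap \sigma = \sigma^{(l)}$.

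First I would rewrite the left-hand side as a single matrix product. Setting $w = \sum_{i \in \sigma^{(l)}} {\bf e}_i \in \mathbb{Z}^{\sigma}$, the sum in question equals ${}^tw\, A_\sigma^{-1}{\bf a}(j)$. The idea is then to identify the row vector ${}^t w\, A_\sigma^{-1}$ explicitly. I claim that ${}^t w\, A_\sigma^{-1} = {}^t{\bf e}_l$, where ${\bf e}_l\in\mathbb{Z}^{(n+k)\times 1}$ is the standard basis vector corresponding to the $l$-th row of $A$. Equivalently, one must check ${}^t{\bf e}_l\, A_\sigma = {}^tw$. But ${}^t{\bf e}_l\, A_\sigma$ is exactly the $l$-th row of $A_\sigma$, whose $i$-th entry (for $i \in \sigma$) is the indicator of $i \in I_l$ by the observation above, hence equals the $i$-th entry of $w$. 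This gives the identification.

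Once this is done the lemma is immediate: one has
\[
\sum_{i\in\sigma^{(l)}} {}^t{\bf e}_i\, A_\sigma^{-1}{\bf a}(j) = {}^t{\bf e}_l\cdot {\bf a}(j),
\]
which is the $l$-th coordinate of ${\bf a}(j)$. By the same indicator-row property, this is $1$ if $j \in I_l$ and $0$ otherwise. Since $j$ is assumed to lie in $\bar\sigma$, the condition $j \in I_l$ is equivalent to $j \in I_l \setminus \sigma^{(l)} = \bar\sigma^{(l)}$, which yields precisely the case distinction stated in the lemma. There is essentially no obstacle here — the whole argument is a one-line linear algebra computation, and the only thing worth emphasizing is why the structure of $A$ forces ${}^t w\, A_\sigma^{-1}$ to be the row extracting the $l$-th coordinate.
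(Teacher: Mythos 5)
Your proof is correct and uses the same underlying idea as the paper's: both exploit the indicator structure of the top $k$ rows of $A$ and factor through $A_\sigma A_\sigma^{-1}$ to identify $\sum_{i\in\sigma^{(l)}}{}^t{\bf e}_i A_\sigma^{-1}$ with the row vector extracting the $l$-th coordinate. The only difference is cosmetic — the paper writes out the block matrix $\bigl(\begin{smallmatrix} I_k & \\ & \mathbb{O}_n\end{smallmatrix}\bigr) A_\sigma$ explicitly, while you phrase the same computation as the identity ${}^t{\bf e}_l A_\sigma = {}^t w$ — so the two arguments are essentially identical.
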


\begin{proof}
Observe first that, if we write $A$ as $A=({\bf a}(1)|\cdots|{\bf a}(N)),$ then for any $j\in\bar{\sigma}^{(l)},$ we have
\begin{equation}
\transp{
\begin{pmatrix}
{\bf e}_m\\
\hline
O
\end{pmatrix}
}
{\bf a}(j)=
\begin{cases}
1\; (m=l)\\
0\; (m\neq l).
\end{cases}
\end{equation}

\noindent
This can be written as 
\begin{equation}
\left(
\begin{array}{c|c}
I_k& \\
\hline
 &\mathbb{O}_n
\end{array}
\right)
{\bf a}(j)=
\begin{pmatrix}
{\bf e}_l\\
\hline
O
\end{pmatrix}.
\end{equation}

\noindent
We thus have

\begin{align}
\begin{pmatrix}
{\bf e}_l\\
\hline
O
\end{pmatrix}
&=
\left(
\begin{array}{c|c}
I_k& \\
\hline
 &\mathbb{O}_n
\end{array}
\right)
{\bf a}(j)\\
 &=
\left(
\begin{array}{c|c}
I_k& \\
\hline
 &\mathbb{O}_n
\end{array}
\right)
A_\sigma
A_\sigma^{-1}
{\bf a}(j)\\
 &=
\left(
\begin{array}{ccc|ccc|c|ccc}
1&\cdots&1&0&\cdots&0&\cdots&0&\cdots&0\\
\hline
0&\cdots&0&1&\cdots&1&\cdots&0&\cdots&0\\
\hline
 &\vdots& & &\vdots& &\ddots& &\vdots& \\
\hline
0&\cdots&0&0&\cdots&0&\cdots&1&\cdots&1\\
\end{array}
\right)
A_\sigma^{-1}
{\bf a}(j).
\end{align}

\noindent
The formula above clearly shows the lemma.

\end{proof}
\noindent
Thanks to the lemma above, we have

\begin{equation}
I_l({\bf m}_l)=
\frac{
(2\pi\sqrt{-1})^{|\sigma^{(l)}|+1}\exp\left\{-\pi\sqrt{-1}\Bigg(\displaystyle\sum_{i\in\sigma^{(l)}}{}^t{\bf e}_iA_\sigma^{-1}{\bf d}\Bigg)\right\}
}
{
\displaystyle{ \prod_{i\in\sigma^{(l)}}\Gamma\Big(1-{}^t{\bf e}_iA_\sigma^{-1}({\bf d}+A_{\bar{\sigma}}{\bf m})\Big)\displaystyle\Gamma\Bigg(\sum_{i\in\sigma^{(l)}}{}^t{\bf e}_iA_\sigma^{-1}{\bf d}\Bigg)|{\bf m}_l|!
}
}
\end{equation}

Therefore, we have

\begin{align}
f_{\s,0}(z)&=\frac{
e^{-\pi\sqrt{-1}|A_\sigma^{-1}d|}
}
{
\displaystyle
\prod_{l=1}^k
\Gamma\left(
\sum_{i\in\sigma^{(l)}}{}^t{\bf e}_iA_\sigma^{-1}d
\right)
}
z_\sigma^{-A_\sigma^{-1}d}
\sum_{{\bf m}\in\mathbb{Z}^{\bar{\sigma}}}
\frac{
(z_\sigma^{-A_\sigma^{-1}A_{\bar{\sigma}}}z_{\bar{\sigma}})^{{\bf m}}
}
{
\Gamma({\bf 1}-A_\sigma^{-1}(d+A_{\bar{\sigma}}{\bf m})){\bf m}!
}\\
&=\frac{
e^{-\pi\sqrt{-1}|A_\sigma^{-1}d|}
}
{
\displaystyle
\prod_{l=1}^k
\Gamma\left(
\sum_{i\in\sigma^{(l)}}{}^t{\bf e}_iA_\sigma^{-1}d
\right)
}\displaystyle\sum_{i=1}^r\varphi_{\s,{\bf k}(i)}(z).\label{ResExpansion}
\end{align}

As in \S\ref{SectionLaplace}, if we denote the deck transformation of $\Gamma_{\sigma,0}$ associated to $\xi_\sigma\mapsto e^{2\pi\sqrt{-1}\tilde{\bf k}}\xi_\sigma$ $(\tilde{\bf k}\in\Z^{n+k})$ by $\Gamma_{\sigma,\tilde{\bf k}}$, we obtain an analogue of \cref{thm:fundamentalthm1}.

\begin{thm}\label{thm:fundamentalthm2}
Take a regular triangulation $T$ of $A$. Assume that the parameter vector $d$ is very generic with respect to any $\sigma\in T$ and that for any $l=1,\dots,k$, one has $\displaystyle\sum_{i\in\sigma^{(l)}}{}^t{\bf e}_iA_\sigma^{-1}d\notin\Z_{\leq 0}$. Then, if one puts
\begin{equation}
f_{\sigma,\tilde{\bf k}(j)}(z)=\frac{1}{(2\pi\sqrt{-1})^{n+2k}}\int_{\Gamma_{\s,\tilde{\bf k}(j)}}\frac{y^{\gamma -1}x^{c-1}}{(1-y_1h_{1,z^{(1)}}(x))\cdots (1-y_kh_{k,z^{(k)}}(x))}dydx,
\end{equation}
$\bigcup_{\s\in T}\{ f_{\sigma,\tilde{\bf k}(j)}(z)\}_{j=1}^{r}$ is a basis of solutions of $M_A(d)$ on the non-empty open set $U_T$, where $\{\tilde{\bf k}(j)\}_{j=1}^{r}$ is a complete system of representatives $\Z^{n+k}/\Z{}^tA_\sigma$. Moreover, for each $\sigma\in T,$ one has a transformation formula 
\begin{equation}
\begin{pmatrix}
f_{\sigma,\tilde{\bf k}(1)}(z)\\
\vdots\\
f_{\sigma,\tilde{\bf k}(r)}(z)
\end{pmatrix}
=
T_\sigma
\begin{pmatrix}
\varphi_{\sigma,{\bf k}(1)}(z)\\
\vdots\\
\varphi_{\sigma,{\bf k}(r)}(z)
\end{pmatrix}.
\end{equation}
Here, $T_\sigma$ is an $r\times r$ matrix given by 
\begin{equation}
T_\sigma=\frac{
e^{-\pi\sqrt{-1}|A_\sigma^{-1}d|}
}
{
\displaystyle
\prod_{l=1}^k
\Gamma\left(
\sum_{i\in\sigma^{(l)}}{}^t{\bf e}_iA_\sigma^{-1}d
\right)
}
\diag\Big( e^{2\pi\sqrt{-1}{}^t\tilde{\bf k}(i)A_\sigma^{-1}d}\Big)_{i=1}^{r}
\Big(e^{2\pi\sqrt{-1}{}^t\tilde{\bf k}(i)A_\sigma^{-1}A_{\bs}{\bf k}(j)}\Big)_{i,j=1}^{r}.
\end{equation}
\end{thm}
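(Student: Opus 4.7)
The plan is to parallel the proof of \cref{thm:fundamentalthm1} verbatim, relying on the explicit expansion (\ref{ResExpansion}) that has already been derived for $f_{\sigma,0}(z)$. First, I would compute $f_{\sigma,\tilde{\bf k}}(z)$ by tracking the effect of the deck transformation $\xi_\sigma\mapsto e^{2\pi\sqrt{-1}\tilde{\bf k}}\xi_\sigma$. Repeating verbatim the manipulation used in \S\ref{SectionLaplace}: the prefactor $\xi_\sigma^{A_\sigma^{-1}d-{\bf 1}}$ in (\ref{res1}) picks up a uniform phase $e^{2\pi\sqrt{-1}{}^t\tilde{\bf k}A_\sigma^{-1}d}$, while each monomial $\xi_\sigma^{A_\sigma^{-1}A_{\barsigma}{\bf m}}$ arising from the geometric series picks up the $\bf m$-dependent phase $e^{2\pi\sqrt{-1}{}^t\tilde{\bf k}A_\sigma^{-1}A_{\barsigma}{\bf m}}$. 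Since this latter phase depends only on the class $[A_{\barsigma}{\bf m}]\in\Z^{n+k}/\Z A_\sigma$, grouping the expansion via the decomposition $\Z_{\geq 0}^{\barsigma}=\bigsqcup_{i=1}^r\Lambda_{{\bf k}(i)}$ yields
\begin{equation}
f_{\sigma,\tilde{\bf k}}(z)=\frac{e^{-\pi\sqrt{-1}|A_\sigma^{-1}d|}}{\prod_{l=1}^k\Gamma\left(\sum_{i\in\sigma^{(l)}}{}^t{\bf e}_iA_\sigma^{-1}d\right)}\,e^{2\pi\sqrt{-1}{}^t\tilde{\bf k}A_\sigma^{-1}d}\sum_{i=1}^r e^{2\pi\sqrt{-1}{}^t\tilde{\bf k}A_\sigma^{-1}A_{\barsigma}{\bf k}(i)}\,\varphi_{\sigma,{\bf k}(i)}(z),
\end{equation}
which reads off the asserted matrix $T_\sigma$.

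Next, I would establish invertibility of $T_\sigma$. The two diagonal conjugating factors are obviously invertible once we know the scalar $\prod_{l=1}^k\Gamma(\sum_{i\in\sigma^{(l)}}{}^t{\bf e}_iA_\sigma^{-1}d)$ is finite and nonzero, which is exactly the content of the hypothesis $\sum_{i\in\sigma^{(l)}}{}^t{\bf e}_iA_\sigma^{-1}d\notin\Z_{\leq 0}$. The central character matrix $\bigl(e^{2\pi\sqrt{-1}{}^t\tilde{\bf k}(i)A_\sigma^{-1}A_{\barsigma}{\bf k}(j)}\bigr)_{i,j=1}^r$ is handled exactly as in \S\ref{SectionLaplace}: by \cref{lem:pairing} together with the proposition that follows it, choosing $\{\tilde{\bf k}(j)\}_{j=1}^r$ to be a complete system of representatives of $\Z^{n+k}/\Z\,{}^tA_\sigma$ identifies the rows of this matrix with the full character group of $\Z^{n+k}/\Z A_\sigma$ evaluated at the representatives $\{[A_{\barsigma}{\bf k}(j)]\}$, and character orthogonality makes $\tfrac{1}{\sqrt{r}}$ times this matrix unitary, hence invertible.

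Finally, I would conclude by a rank count. For each $\sigma\in T$, the very genericity hypothesis and \cref{prop:independence} make $\{\varphi_{\sigma,{\bf k}(i)}\}_{i=1}^r$ linearly independent formal solutions, and the invertibility above shows that $\{f_{\sigma,\tilde{\bf k}(j)}\}_{j=1}^r$ spans the same space. Summing over $\sigma\in T$, the total cardinality is $\sum_{\sigma\in T}\vol_{\Z}(\sigma)=\vol_\Z(\Delta_A)=\rank M_A(d)$, and, as recalled at the end of \S\ref{sectionseries}, the union over $\sigma\in T$ of the $\Gamma$-series furnishes a basis of holomorphic solutions on $U_T$; therefore so does $\bigcup_{\sigma\in T}\{f_{\sigma,\tilde{\bf k}(j)}\}_{j=1}^r$. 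The one point requiring genuine care, rather than quotation from \S\ref{SectionLaplace}, is the justification that $\Gamma_{\sigma,0}=p^*(P_\sigma)$ is a well-defined $(n+k)$-cycle in $\T^{n+k}_{\tilde x}$ with coefficients in the local system $\underline{\C}\tilde x^{d}$, so that the deck-translates $\Gamma_{\sigma,\tilde{\bf k}}$ and the integral $f_{\sigma,\tilde{\bf k}}(z)$ make sense; this however is identical to the compactly supported Pochhammer construction of \S\ref{SectionLaplace} but easier here, since $P_\sigma$ is already compact and no Hankel prolongation is needed.
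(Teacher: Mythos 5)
Your proposal reproduces the paper's intended argument faithfully: the paper derives the expansion (\ref{ResExpansion}) for $f_{\sigma,0}(z)$ and then simply states that the deck-transformation bookkeeping and the character-matrix invertibility argument of \S\ref{SectionLaplace} carry over ``as in \S 3,'' which is exactly what you spell out, with the phases $e^{2\pi\ii\,{}^t\tilde{\bf k}A_\sigma^{-1}d}$ and $e^{2\pi\ii\,{}^t\tilde{\bf k}A_\sigma^{-1}A_{\barsigma}{\bf k}(i)}$, the non-degenerate pairing from \cref{lem:pairing}, and the rank count via $\sum_{\sigma\in T}\vol_\Z(\sigma)=\vol_\Z(\Delta_A)$. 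Your side remarks are also accurate: the hypothesis $\sum_{i\in\sigma^{(l)}}{}^t{\bf e}_iA_\sigma^{-1}d\notin\Z_{\leq 0}$ is precisely what keeps the scalar $1/\prod_l\Gamma(\cdot)$ from vanishing, and no Hankel prolongation is needed since $P_\sigma$ is already a compact product of Pochhammer cycles.
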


\begin{exa}
We consider Residue integral
\begin{equation}\label{ResEx}
f(z)=\frac{1}{(2\pi\sqrt{-1})^4}\int_\Gamma \frac{y^{\gamma-1}x_1^{c_1-1}x_2^{c_2-1}}{1-y(z_0+z_1x_1+z_2x_2^2+z_3x_1^{-1}x_2^2)}dy\wedge dx_1\wedge dx_2.
\end{equation}
A matrix of (\ref{ResEx}) is given by a $3\times 4$ matrix
\begin{equation}
A
=
\begin{pmatrix}
1&1&1&1\\
0&1&0&-1\\
0&0&2&3
\end{pmatrix},
\end{equation}and a parameter vector $d$ is 
$d=
\begin{pmatrix}
\gamma\\
c_1\\
c_2
\end{pmatrix}
\in\mathbb{C}^{3\times 1}
$.

\noindent
GKZ system associated to $A$ and $d$ is

\begin{numcases}{M_A(d):}
(z_0\partial_0+z_1\partial_1+z_2\partial_2+z_3\partial_3+\gamma)\cdot f(z)&$=0$\\
(z_1\partial_1-z_3\partial_3+c_1) \cdot f(z)&$=0$ \\
(2z_2\partial_2+3z_3\partial_3+c_2) \cdot f(z)&$=0$ \\
(\partial_0\partial_2^3-\partial_1^2\partial_3^2) \cdot f(z)&$=0.$
\end{numcases} 
For any generic parameter $d$, the rank of $M_A(d)$ is $\vol_\mathbb{Z}\Delta_A=4.$ If we take any real vector $\omega\in\{\omega\in\R^4\mid -\frac{\omega_0}{2}+\omega_1-\frac{3}{2}\omega_2+\omega_3>0,-\frac{\omega_0}{2}-\frac{3}{2}\omega_2+\omega_3+\omega_1>0\},$ $T(\omega)$ does not depend on the choice of $\omega$ and it consists of two simplexes $\sigma_1=\{ 0,1,2\}$ and $\sigma_2=\{ 0,2,3\}.$ We only consider $\Gamma$-series associated to $\sigma_1.$ They are given by
\begin{equation}
\varphi_{w^k}(z)=z_0^{c_1+\frac{c_2}{2}-\gamma}z_1^{-c_1}z_2^{-\frac{c_2}{2}}\displaystyle\sum_{m\in\Lambda_k}\frac{(z_0^{-\frac{1}{2}}z_1z_2^{-\frac{3}{2}}z_3)^m}{\Gamma(1-\gamma+c_1+\frac{c_2}{2}-\frac{m}{2})\Gamma(1-c_1+m)\Gamma(1-\frac{c_2}{2}-\frac{3}{2}m)m!},
\end{equation}
where
\begin{equation}
\Lambda_k=
\begin{cases}
2\mathbb{Z}_{\geq 0} & (k\equiv 0\mod 2)\\
2\mathbb{Z}_{\geq 0}+1 & (k\equiv 1\mod 2).
\end{cases}
\end{equation}
Note that these series are convergent on $(\C^*)^3.$ As before, we consider the covering transform associated to $\sigma_1$ given by
\begin{equation}
\xi_0=z_0y,\;\; \xi_1=z_1yx_1,\;\xi_2=z_2yx_2^2.
\end{equation}
Relation between $(\ref{ResEx})$ and $\Gamma-$series is given by
\begin{equation}
\begin{pmatrix}
f_{\sigma_1,0}(z)\\
f_{\sigma_1,1}(z)
\end{pmatrix}
=
\frac{e^{-\pi\sqrt{-1}\gamma}}{\Gamma(\gamma)}
\begin{pmatrix}
1&0\\
0&e^{\pi\sqrt{-1}c_2}
\end{pmatrix}
\begin{pmatrix}
1&1\\
1&-1
\end{pmatrix}
\begin{pmatrix}
\varphi_{\sigma_1,0}(z)\\
\varphi_{\sigma_1,1}(z)
\end{pmatrix}.
\end{equation}
\end{exa}

\begin{rem}
This example can be considered as a non-confluent version of the example we saw before. The readers should notice the resemblance of all computations examined above.
\end{rem}
\end{section}

\begin{section}{Construction of integration contours I\hspace{-.1em}I\hspace{-.1em}I: Euler integral representations}\label{SectionEuler}
In this section, we construct integration contours associated to Euler integral representation
\begin{equation}\label{EulerInt2}
f(z)=\frac{1}{(2\pi\ii)^{n+k}}\int h_{1,z^{(1)}}(x)^{-\gamma_1}\cdots h_{k,z^{(k)}}(x)^{-\gamma_k}x^{c-{\bf 1}}dx,
\end{equation}
where $h_{l,z^{(l)}}(x)=\displaystyle\sum_{j=1}^{N_l}z^{(l)}_jx^{{\bf a}^{(l)}(j)}$ is a Laurent polynomial. Before starting a concrete discussion, let us first remark that we can always transform Euler integral representation to Laplace integral representation via Cayley trick. By \cref{lemma:lemma}, we have
\begin{equation}
h_{l,z^{(l)}}(x)^{-\gamma_l}=\frac{\Gamma(1-\gamma_l)}{2\pi\sqrt{-1}}\int_{\Gamma_{0,1}}y_l^{\gamma_l-1}e^{y_lh_{l,z^{(l)}}(x)} dy_l
\end{equation}
as long as this integral is absolutely convergent. Thus, ignoring the problem of convergence, we have the following formal equation
\begin{equation}
f(z)=\frac{\Gamma(1-\gamma)}{(2\pi\sqrt{-1})^{n+k}}\int \exp\left\{\sum_{l=1}^ky_lh_{l,z^{(l)}}(x)\right\}y^{\gamma-{\bf 1}}x^{c-{\bf 1}}dydx.
\end{equation}
\noindent
This suggests that if we introduce an $n\times N_l$ matrix $A_l=({\bf a}^{(l)}(1)\mid \cdots\mid {\bf a}^{(l)}(N_l))$, $f(z)$ is a solution of GKZ system $M_A(d)$ associated to an $n\times N$ $(N=N_1+\dots+N_k)$ matrix
\begin{equation}
A
=
\left(
\begin{array}{ccc|ccc|c|ccc}
1&\cdots&1&0&\cdots&0&\cdots&0&\cdots&0\\
\hline
0&\cdots&0&1&\cdots&1&\cdots&0&\cdots&0\\
\hline
 &\vdots& & &\vdots& &\ddots& &\vdots& \\
\hline
0&\cdots&0&0&\cdots&0&\cdots&1&\cdots&1\\
\hline
 &A_1& & &A_2& &\cdots & &A_k& 
\end{array}
\right),
\end{equation}
\noindent
and a parameter 
\begin{equation}
d=
\begin{pmatrix}
\gamma\\
c
\end{pmatrix}.
\end{equation}
\noindent
We examine the observation above from the view point of $D-$module in \S\ref{SectionEquivalence}. Let us remark, however, that it can directly be confirmed that Euler integral (\ref{EulerInt2}) is annihilated by $M_A(d)$.

\begin{prop}[\cite{GKZ} 2.7. Theorem]\label{prop:GKZ}
(\ref{EulerInt2}) is a solution of $M_A(d).$
\end{prop}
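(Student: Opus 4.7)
The plan mirrors the proof of \cref{prop:ResIntIsSol}: I would separately verify the Euler equations (\ref{EulerEq}) and the box equations (\ref{ultrahyperbolic}), assuming throughout that $\Gamma$ is chosen so that $f(z)$ converges absolutely and differentiation under the integral sign is legitimate.

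For the Euler operators, I would examine the scaling behavior of $f(z)$ under the torus action $z\mapsto(\tau,t)^A z$ with $(\tau,t)\in(\C^*)^k\times(\C^*)^n$. The block form of $A$ immediately gives $h_{l,\tau_l t^{A_l}z^{(l)}}(x)=\tau_l\,h_{l,z^{(l)}}(tx)$, and after pulling out the $\tau_l^{-\gamma_l}$ factors and performing the substitution $x\mapsto t^{-1}x$ (which leaves $dx/x$ invariant and produces an overall $t^{-c}$), one obtains $f((\tau,t)^A z)=\tau^{-\gamma}\,t^{-c}\,f(z)$. Differentiating with respect to each $\tau_l$ and each $t_i$ at $(\tau,t)=({\bf 1},{\bf 1})$ then yields exactly the $n+k$ Euler operators of $M_A(d)$.

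For the box operators, since each variable $z_j^{(l)}$ enters $h_l$ only linearly, commuting differentiations give the clean formula
\[\partial^u f(z)=\prod_{l=1}^k\prod_{s=0}^{|u^{(l)}|-1}(-\gamma_l-s)\cdot\int \prod_{l=1}^k h_{l,z^{(l)}}(x)^{-\gamma_l-|u^{(l)}|}\,x^{\sum_l A_l u^{(l)}+c-{\bf 1}}\,dx\]
for any $u\in\Z_{\geq 0}^N$ with decomposition $u=u^{(1)}+\cdots+u^{(k)}$, $u^{(l)}\in\Z_{\geq 0}^{N_l}$. Now for $u\in L_A$ the top $k$ rows of $A$ force $|u^{(l)}|=0$ for each $l$, hence $|u_+^{(l)}|=|u_-^{(l)}|$, while the bottom $n$ rows give $\sum_l A_l u_+^{(l)}=\sum_l A_l u_-^{(l)}$ --- exactly as in the proof of \cref{prop:ResIntIsSol}. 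Both the scalar falling-factorial prefactors and the integrands therefore coincide for $u_+$ and $u_-$, so $\Box_u f(z)=\partial^{u_+}f(z)-\partial^{u_-}f(z)=0$.

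The main delicate point, just as in \cref{prop:ResIntIsSol}, is the handling of $\Gamma$: one must choose it to avoid each hypersurface $\{h_{l,z^{(l)}}(x)=0\}$ (so that the branches of $h_l^{-\gamma_l}$ are well-defined) and to guarantee locally uniform convergence in $z$, under which the formal manipulations above become rigorous. Once this is granted the computation is purely algebraic; alternatively one could attempt to deduce the result from \cref{prop:ResIntIsSol} via Cayley trick together with \cref{lemma:lemma}, but justifying the commutation of the resulting $y_l$-integrals with the $z$-derivatives requires essentially the same analytic estimates as the direct argument sketched above.
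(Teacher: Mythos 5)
Your computation is correct and is precisely the ``direct computation'' the paper alludes to without writing out (it simply cites \cite{GKZ} Theorem 2.7 and the proof of \cref{prop:ResIntIsSol}). The scaling argument for the Euler operators, the falling-factorial formula for $\partial^u f$, and the observation that $u\in L_A$ forces $|u_+^{(l)}|=|u_-^{(l)}|$ and $\sum_l A_l u_+^{(l)}=\sum_l A_l u_-^{(l)}$ are exactly the steps one needs, and they parallel the paper's spelled-out proof of \cref{prop:ResIntIsSol}.
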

\noindent
The proof of \cref{prop:GKZ} is a direct computation. 

In the following, we construct explicit integration contours for (\ref{EulerInt2}). It is important to note that the construction below is less symmetric than the previous two constructions. This asymmetry comes from the fact that the dimension of the integration variable of (\ref{EulerInt2}) is smaller than the number of rows of the $A$ matrix of the GKZ system. For this reason, we can not directly construct a covering transformation associated to a simplex unlike the previous integral representations (\ref{LaplaceInt2}) and (\ref{RegResInt}). In order to clarify the method of breaking the symmetry, we divide the discussion into three parts. Case 1 deals with the case when $k=1$ and the $A$ matrix has a very convenient form for explicit computations . Case 2 deals with the case when $k=1$ and the $A$ matrix has a general form. Case 3 deals with the most general form (\ref{EulerInt2}). 

\vspace{2em}
\noindent
\underline{Case 1} We begin with the following simple situation: Suppose (\ref{EulerInt2}) takes the form
\begin{equation}\label{SuperSimpleEulerInt}
f_{\s,0}(z_0,z)=\frac{1}{(2\pi\sqrt{-1})^{n+1}}\int_\Gamma h_z(x)^{-\gamma}x^{c-1}dx=\frac{1}{(2\pi\sqrt{-1})^{n+1}}\int_\Gamma \left( z_0+\sum_{j=1}^Nz_jx^{\ca(j)}\right)^{-\gamma}x^{c-1}dx.
\end{equation}
Here, $\Gamma$ is an integration contour which will be specified later. If we put $\mathring{A}=(\ca(1)\mid \cdots \mid\ca(N)),$ the $A$ matrix is given by
\begin{equation}\label{SuperSimpleMatrix}
A
=
\left(
\begin{array}{c|ccc}
1&1&\cdots&1\\
\hline
O& &\mathring{A}& 
\end{array}
\right).
\end{equation}

\noindent
Suppose now an $n+1$ simplex $\sigma\subset\{0,1,\dots,N\}$ is given so that $0\in\sigma$. We put $\tau=\sigma\setminus\{0\}.$ We rewrite (\ref{SuperSimpleEulerInt}) as
\begin{align}
f_{\s,0}(z_0,z)&=\frac{z_0^{-\gamma}}{(2\pi\sqrt{-1})^{n+1}}\int_\Gamma \left( 1+\sum_{j=1}^Nz_0^{-1}z_jx^{\ca(j)}\right)^{-\gamma}x^{c-1}dx\\
 &=\frac{z_0^{-\gamma}}{(2\pi\sqrt{-1})^{n+1}}\int_\Gamma \left( 1+\sum_{j=1}^Nw_jx^{\ca(j)}\right)^{-\gamma}x^{c-1}dx.
\end{align}
Now, we introduce a covering transformation 
\begin{equation}
\mathbb{T}^n_x\overset{p}{\rightarrow}\mathbb{T}^\tau_{\xi_\tau}
\end{equation}
given by
\begin{equation}
x\mapsto \xi_\tau=(e^{-\pi\sqrt{-1}}w_ix^{\ca(i)})_{i\in\tau}.
\end{equation}
This formula is abbreviated as 
\begin{equation}
\xi_\tau=e^{-\pi\sqrt{-1}{}^t{\bf 1}_\tau}w_\tau x^{\cA_\tau}.
\end{equation}
Not that we have put $\bartau=\bs$. As in \S\ref{SectionLaplace} and \S\ref{SectionResidue}, assuming that $\Gamma$ is given by a pull back $\Gamma=p^*\gamma$, we have a formula
\begin{align}
f_{\s,0}(z_0,z)&=\frac{z_0^{-\gamma}}{(2\pi\sqrt{-1})^{n+1}}(e^{-\pi\sqrt{-1}{}^t{\bf 1}_\tau}w_\tau)^{\cA_\tau^{-1}c}\times\nonumber\\
 & \int_\gamma\left(1-\sum_{i\in\tau}\xi_i+\sum_{j\in\bartau}(e^{-\pi\sqrt{-1}{}^t{\bf 1}_\tau}w_\tau)^{-\cA_\tau^{-1}\ca(j)}w_j\xi_\tau^{\cA_\tau^{-1}\ca(j)}\right)^{-\gamma}\xi_\tau^{\cA_\tau^{-1}c-1}d\xi_\tau.\label{AnInt}
\end{align}
At this stage, we choose our integration contour $\gamma$ to be the Pochhammer cycle $P_\tau$. Note that (\ref{AnInt}) is convergent if the quantity 
\begin{equation}\label{ConvCond}
|(e^{-\pi\sqrt{-1}{}^t{\bf 1}_\tau}w_\tau)^{-\cA_\tau^{-1}\ca(j)}w_j|=|z_0^{|\cA_\tau^{-1}{\bf a}(j)|-1}z_\tau^{-\cA_\tau^{-1}\ca(j)}z_j|
\end{equation}
is small enough. Substituting the expansion 
\begin{align}
  &\left(1-\sum_{i\in\tau}\xi_i+\sum_{j\in\bartau}(e^{-\pi\sqrt{-1}{}^t{\bf 1}_\tau}w_\tau)^{-\cA_\tau^{-1}\ca(j)}w_j\xi^{\cA_\tau^{-1}\ca(j)}\right)^{-\gamma}\\
 =&\sum_{{\bf m}\in\Z_{\geq 0}^{\bartau}}\frac{(-1)^{|{\bf m}|}(\gamma)_{|{\bf m}|}}{{\bf m}!}\left((e^{-\pi\sqrt{-1}{}^t{\bf 1}_\tau}w_\tau)^{-\cA_\tau^{-1}\ca(j)}w_j\xi^{\cA_\tau^{-1}\ca(j)}\right)^{\bf m}\left(1-\sum_{i\in\tau}\xi_i\right)^{-\gamma-|{\bf m}|},
\end{align}
we have identities
\begin{align}
f_{\s,0}(z_0,z)&=\frac{1}{(2\pi\ii)^{n+1}}z_0^{-\gamma}e^{\pi\sqrt{-1}|\cA_\tau^{-1}c|}w_\tau^{-\cA_\tau^{-1}c}\sum_{\bf m}\frac{(-1)^{|{\bf m}|}(\gamma)_{|{\bf m}|}}{{\bf m}!}e^{\pi\ii|\cA_\tau^{-1}\cA_{\bt}{\bf m}|}\times\nonumber\\
 & \quad z_0^{|\cA_\tau^{-1}\cA_{\bartau}{\bf m}|-|{\bf m}|}(z_\tau^{-\cA_\tau^{-1}\cA_{\bartau}}z_{\bartau})^{\bf m}\int\left(1-\sum_{i\in\tau}\xi_i\right)^{-\gamma-|{\bf m}|}\xi_\tau^{\cA_\tau^{-1}(c+\cA_{\bartau}{\bf m})-{\bf 1}}d\xi_\tau\\
 &=z_0^{-\gamma}e^{\pi\sqrt{-1}|\cA_\tau^{-1}c|}w_\tau^{-\cA_\tau^{-1}c}\sum_{\bf m}\frac{(-1)^{|{\bf m}|}(\gamma)_{|{\bf m}|}}{{\bf m}!}e^{\pi\ii|\cA_\tau^{-1}\cA_{\bt}{\bf m}|}z_0^{|\cA_\tau^{-1}\cA_{\bartau}{\bf m}|-|{\bf m}|}(z_\tau^{-\cA_\tau^{-1}\cA_{\bartau}}z_{\bartau})^{\bf m}\times\nonumber\\
 & \quad \frac{e^{-\pi\sqrt{-1}(1-\gamma+|\cA_\tau^{-1}c|+|\cA_\tau^{-1}\cA_{\bartau}{\bf m}|-|{\bf m}|)}}{\Gamma({\bf 1}-\cA_\tau^{-1}(c+\cA_{\bartau}{\bf m}))\Gamma(\gamma+|{\bf m}|)\Gamma(1-\gamma+|\cA_\tau^{-1}c|+|\cA_\tau^{-1}\cA_{\bartau}\bm|-|\bm|)\bm !}\\
 &=\frac{e^{-\pi\ii(1-\gamma)}}{\Gamma(\gamma)}z_0^{|\cA_\tau^{-1}c|-\gamma}z_\tau^{-\cA_\tau^{-1}c}\times\nonumber\\
 & \quad\sum_{\bf m}
\frac{z_0^{|\cA_\tau^{-1}\cA_{\bartau}{\bf m}|-|{\bf m}|}(z_\tau^{-\cA_\tau^{-1}\cA_{\bartau}}z_{\bartau})^{\bf m}}{\Gamma({\bf 1}_\tau-\cA_\tau^{-1}(c+\cA_{\bartau}{\bf m}))\Gamma(1-\gamma+|\cA_\tau^{-1}c|+|\cA_\tau^{-1}\cA_{\bartau}\bm|-|\bm|)\bm !}.\label{SimpleSum}
\end{align}
Now, let us compute the explicit form of $\Gamma$-series associated to the simplex $\sigma$. By a direct computation, we have
\begin{equation}
(A_{\s})^{-1}=
\left(
\begin{array}{c|c}
1&-{}^t{\bf 1}_\tau \cA_\tau^{-1}\\
\hline
O&\cA_\tau^{-1}
\end{array}
\right)
\end{equation}
and
\begin{equation}
(A_{\s})^{-1}A_{\bs}=
\begin{pmatrix}
{}^t{\bf 1}_{\bs}-{}^t{\bf 1}_\tau \cA_\tau^{-1}\cA_{\bt}\\
\hline
\cA_\tau^{-1}\cA_{\bt}
\end{pmatrix}.
\end{equation}
Therefore, in view of (\ref{seriesphi}), for any ${\bf k}\in\Z^{\bt}(=\Z^{\bs}),$ we have a formula
\begin{align}
 \varphi_{\s,\bf k}(z_0,z)=&z_0^{|\cA_\tau^{-1}c|-\gamma}z_\tau^{-A_\tau^{-1}c}\sum_{{\bf k+m}\in\Lambda_{\bf k}}\frac{z_0^{|A_\tau^{-1}A_{\bartau}({\bf k+m})|-|{\bf k+m}|}(z_\tau^{-A_\tau^{-1}A_{\bartau}}z_{\bartau})^{\bf k+m}}{({\bf k}+\bm) !}\times\nonumber\\
 & \frac{1}{\Gamma\left({\bf 1}_\tau-A_\tau^{-1}(c+A_{\bartau}({\bf k+m}))\right)\Gamma\left(1-\gamma+|A_\tau^{-1}c|+|A_\tau^{-1}A_{\bartau}({\bf k}+\bm)|-|{\bf k}+\bm|\right)},\label{SimpleSeries}
\end{align}
where the set $\Lambda_{\bf k}$ in this case is given by
\begin{equation}
\Lambda_{\bf k}=\{{\bf k+m}\in\Z_{\geq 0}^{\bs}\mid A_{\bs}\bm\in\Z A_{\s}\}
=\{{\bf k+m}\in\Z_{\geq 0}^{\bt}\mid \cA_{\bt}\bm\in\Z \cA_{\tau}\}.
\end{equation}
Note that the projection $\pi:\Z^{(n+1)\times1}\ni
\begin{pmatrix}
v_0\\
v
\end{pmatrix}
\mapsto v\in\Z^{n\times 1}$ induces an isomorphism of finite abelian groups
\begin{equation}
\Z^{(n+1)\times 1}/\Z \cA_\tau\overset{\sim}{\rightarrow}\Z^{n\times 1}/\Z \cA_\tau
\end{equation}

\noindent
Thus, combining (\ref{SimpleSum}), (\ref{SimpleSeries}), and the identity $\det A_{\s}=\det \cA_\tau$, we obtain the basic formula
\begin{equation}\label{SimpleFormula}
f_{\s,0}(z_0,z)=\frac{e^{-\pi\ii(1-\gamma)}}{\Gamma(\gamma)}\sum_{i=1}^r\varphi_{\s,{\bf k}(i)}(z_0,z),
\end{equation}
where ${\bf k}(1),\dots,{\bf k}(r)\in\Z^{\bs}$ are chosen so that we have identities
\begin{equation}
\left\{ [A_{\bs}{\bf k}(i)]\right\}_{i=1}^r=\Z^{(n+1)\times 1}/\Z A_{\s}\simeq\Z^{n\times 1}/\Z \cA_\tau=\{ [\cA_{\bt}{\bf k}(i)]\}_{i=1}^r.
\end{equation}
Note that from the computation above, the convergence condition (\ref{ConvCond}) of Euler integral (\ref{SuperSimpleEulerInt}) is equivalent to 
\begin{equation}
(z_0,z)\in U_\s.
\end{equation}
As in \S\ref{SectionLaplace} and \S\ref{SectionResidue}, we take any integer vector $\tilde{\bf k}\in\Z^{\tau}$ and consider a deck transformation $\Gamma_{\tilde{\bf k}}$ of $\Gamma_0$ associated to $\xi_\tau\mapsto e^{2\pi\ii{}^t\tilde{\bf k}}\xi_\tau=(e^{2\pi\ii\tilde{k}_i}\xi_i)_{i\in\tau}.$
Then, from (\ref{AnInt}), we have a formula
\begin{align}
f_{\s,\tilde{\bf k}}(z)=&\frac{1}{(2\pi\ii)^{n+1}}\int_{\Gamma_{\tilde{\bf k}}}\left( z_0+\sum_{j=1}^Nz_jx^{\ca(j)}\right)^{-\gamma}x^{c-{\bf 1}}dx\nonumber\\
  =&\frac{e^{-\pi\ii(1-\gamma)}e^{2\pi\ii{}^t\tilde{\bf k}\cA_\tau^{-1}c}}{\Gamma(\gamma)}\sum_{i=1}^r\exp\{ 2\pi\ii{}^t\tilde{\bf k}\cA_\tau^{-1}\cA_{\bt}{\bf k}(i)\}\varphi_{\s,{\bf k}(i)}(z_0,z).\label{DeckTransEuler}
\end{align}
Observe that the map $\Z^{n\times1}/\Z \transp{\cA_\tau}\ni \tilde{\bf k}\mapsto
\begin{pmatrix}
0\\
\tilde{\bf k}
\end{pmatrix}
\in\Z^{(n+1)\times 1}/\Z\transp{A_{\s}}
$ is an isomorphism. We can rewrite the formula (\ref{DeckTransEuler}) in the form

\begin{equation}
f_{\s,\tilde{\bf k}}(z_0,z)=\frac{e^{-\pi\ii(1-\gamma)}\exp\left\{2\pi\ii
\transp{
\begin{pmatrix}
0\\
\tilde{\bf k}
\end{pmatrix}
}
(A_{\s})^{-1}d
\right\}
}
{
\Gamma(\gamma)
}
\sum_{i=1}^r\exp\left\{ 2\pi\ii
\transp{
\begin{pmatrix}
0\\
\tilde{\bf k}
\end{pmatrix}
}
(A_{\s})^{-1}A_{\bs}{\bf k}(i)\right\}\varphi_{\s,{\bf k}(i)}(z_0,z),
\end{equation}
which only contains the information of $A$. By \cref{lem:pairing}, if we take $\tilde{\bf k}(1),\dots,\tilde{\bf k}(r)$ so that we have identities 
\begin{equation}
\left\{ \left[\begin{pmatrix}
0\\
\tilde{\bf k}(i)
\end{pmatrix}\right]\right\}_{i=1}^r=\Z^{(n+1)\times 1}/\Z {}^tA_{\s}\simeq\Z^{n\times 1}/\Z {}^t\cA_\tau=\{ [\tilde{\bf k}(i)]\}_{i=1}^r,
\end{equation}
and if $\gamma\notin\Z_{\leq 0}$, we can conclude that $\{f_{\s,\tilde{\bf k}(i)}\}_{i=1}^r$ is a set of $r$ linearly independent solutions of $M_{A}(d)$.

\vspace{2em}
\noindent
\underline{Case 2} Now, consider a slightly more general case. Namely, suppose our A matrix is of the form
\begin{equation}
A
=
\left(
\begin{array}{c|ccc}
1&1&\cdots&1\\
\hline
{\bf a}(0)& &\cA& 
\end{array}
\right),
\end{equation}
and we are given a $n+1$ simplex $\sigma$ which contains $0$. Our integral is given by the formula
\begin{equation}
f_{\s,0}(z_0,z)=\frac{1}{(2\pi\ii)^{n+1}}\int_\Gamma\left( z_0x^{\ca(0)}+\sum_{j=1}^Nz_jx^{\ca(j)}\right)^{-\gamma}x^{c-1}dx.
\end{equation}
Observe that, rewriting $f_{\s,0}(z_0,z)$ as
\begin{equation}
f_{\s,0}(z_0,z)=\frac{1}{(2\pi\ii)^{n+1}}\int\left( z_0+\sum_{j=1}^Nz_jx^{\ca(j)-\ca(0)}\right)^{-\gamma}x^{c-\gamma\ca(0)-1}dx
\end{equation}
amounts to considering an identity
\begin{equation}
M_{A}(d)=M_{Q_0A}(Q_0d),
\end{equation}
where $Q_0$ is a $n+1$ square matrix 
\begin{equation}
Q_0=
\left(
\begin{array}{c|c}
1&O\\
\hline
-\ca(0)& I_n 
\end{array}
\right).
\end{equation}
We put $\tau=\s\setminus\{ 0\}$. Since the matrix 
\begin{equation}
Q_0A_{\s}=
\left(
\begin{array}{c|ccc}
1&1&\cdots&1\\
\hline
O& &\cA_\tau-\ca(0)\cdot {}^t{\bf 1}_\tau& 
\end{array}
\right)
\end{equation}
has the same form as (\ref{SuperSimpleMatrix}), putting $w_j=z_0^{-1}z_j$ and introducing a covering transformation
\begin{equation}
\T^n_x\ni x\mapsto \xi_\tau=e^{-\pi\ii{}^t{\bf 1}_\tau}w_\tau x^{\cA_\tau-\ca(0)\cdot{}^t{\bf 1}_\tau}\in\T^\tau_{\xi_\tau}
\end{equation}
yield to the identity
\begin{align}
f_{\s,0}(z_0,z)&=\frac{z_0^{-\gamma}(e^{-\pi\sqrt{-1}{}^t{\bf 1}_\tau}w_\tau)^{-(\cA_\tau-\ca(0)\cdot{}^t{\bf 1}_\tau)^{-1}(c-\gamma\ca(0))}}{(2\pi\sqrt{-1})^{n+1}}\times\nonumber\\
 & \quad \int_\gamma\left(1-\sum_{i\in\tau}\xi_i+\sum_{j\in\barsigma}(e^{-\pi\sqrt{-1}{}^t{\bf 1}_\tau}w_\tau)^{-(\cA_\tau-\ca(0)\cdot{}^t{\bf 1}_\tau)^{-1}(\ca(j)-\ca(0))}w_j\xi_\tau^{(\cA_\tau-\ca(0)\cdot{}^t{\bf 1}_\tau)^{-1}(\ca(j)-\ca(0))}\right)^{-\gamma}\times\nonumber\\
 & \quad\xi_\tau^{(A_\tau-\ca(0)\cdot{}^t{\bf 1}_\tau)^{-1}(c-\gamma\ca(0))-1}d\xi_\tau.\label{AnInt2}
\end{align}
Here, we have again assumed that $\Gamma$ is given by a pull back $\Gamma=p^*\gamma$. Note that $\det A_{\s}=\det (A_\s-\ca(0)\cdot{\bf 1}_\tau)$. At this stage, taking $\gamma$ to be the Pochhammer contour $P_\s$, we are able to evaluate (\ref{AnInt2}) as in Case 1. We should be careful about the fact that from the general form of $\Gamma$-series associated to a simplex (\ref{seriesphi}), $\varphi_{\s,{\bf k}}(z)$ is invariant if we replace the matrix $A$ by $Q_0A$ and the parameter $d$ by $Q_0d$. Then, by (\ref{SimpleFormula}), we have a basic formula
\begin{equation}
f_{\s,0}(z_0,z)=\frac{e^{-\pi\ii(1-\gamma)}}{\Gamma(\gamma)}\sum_{i=1}^r\varphi_{\s,{\bf k}(i)}(z_0,z),
\end{equation}
where ${\bf k}(i)$ are chosen so that the identity
\begin{equation}
\{ (\cA_{\bs}-\ca(0)\cdot{}^t{\bf 1}_{\bs}){\bf k}(i)\}_{i=1}^r=\Z^{n\times 1}/\Z (\cA_{\tau}-\ca(0)\cdot{}^t{\bf 1}_{\tau})
\end{equation}
holds. We also know that 
\begin{align}
\Lambda_{\bf k}&=\{ {\bf k+m}\in\Z_{\geq 0}^{\bs}\mid A_{\barsigma}{\bf m}\in \Z A_{\sigma}\}\\
&=\{ {\bf k+m}\in\Z_{\geq 0}^{\bs}\mid Q_0A_{\barsigma}{\bf m}\in \Z Q_0A_{\sigma}\}\\
&=\{ {\bf k+m}\in\Z^{\bs}_{\geq 0}\mid (\cA_{\bs}-\ca(0)\cdot{\bf 1}_{\bs}){\bf m}\in \Z (\cA_{\tau}-\ca(0)\cdot{\bf 1}_\tau)\}.
\end{align}
We can also check that $\{A_{\bs}{\bf k}(i)\}_{i=1}^r=\Z^{(n+1)\times 1}/\Z A_{\s}$ if and only if $\{ (\cA_{\bs}-\ca(0)\cdot{}^t{\bf 1}_{\bs}){\bf k}(i)\}_{i=1}^r=\Z^{n\times 1}/\Z (\cA_{\tau}-{\bf a}(0)\cdot{}^t{\bf 1}_{\tau})$. On the level of finite abelian groups, we have isomorphisms
\begin{equation}
\Z^{(n+1)\times 1}/\Z A_{\s}\overset{Q_0\times}{\overset{\sim}{\rightarrow}}\Z^{(n+1)\times 1}/\Z Q_0A_{\s}\overset{\pi}{\overset{\sim}{\rightarrow}}\Z^{n\times 1}/\Z (A_{\tau}-\ca(0)\cdot{}^t{\bf 1}_{\tau}),
\end{equation}
and the corresponding isomorphisms of dual groups
\begin{equation}\label{DualIsom}
\Z^{(n+1)\times 1}/\Z {}^tA_{\s}=\Z^{(n+1)\times 1}/\Z {}^tA_{\s}\cdot{}^tQ_0\overset{P\times}{\overset{\sim}{\rightarrow}}\Z^{(n+1)\times 1}/\Z P{}^tA_{\s}\cdot{}^tQ_0\overset{\pi}{\overset{\sim}{\rightarrow}}\Z^{n\times 1}/\Z {}^t(\cA_{\tau}-\ca(0)\cdot{}^t{\bf 1}_{\tau}),
\end{equation}
where $\pi$ is a projection which truncates the first coordinate and P is an invertible $\Z$ matrix
\begin{equation}
P=
\left(
\begin{array}{c|ccc}
1&0&\cdots&0\\
\hline
-1&1& & \\
\vdots& &\ddots& \\
-1& & &1
\end{array}
\right).
\end{equation}
These isomorphisms preserve the duality pairing in the following sense: for any elements $[v]\in \Z^{(n+1)\times 1}/\Z {}^tA_{\s}$ and $[w]\in\Z^{(n+1)\times 1}/\Z A_{\s},$ we have an identity
\begin{equation}
{}^tvA_{\s}^{-1}w=\pi(Pv)(\cA_\tau-\ca(0)\cdot{}^t{\bf 1}_\tau)^{-1}\pi(Q_0w)\mod \Z.
\end{equation}
Indeed,
\begin{align}
{}^tvA_{\s}^{-1}w&={}^t(Pv)(Q_0A_{\s}{}^tP)^{-1}(Q_0w)\\
                                            &={}^t(Pv)
\left(
\begin{array}{c|c}
1& \\
\hline
 &(\cA_\tau-\ca(0)\cdot{}^t{\bf 1}_\tau)^{-1}
\end{array}
\right)
(Q_0w)\\
                                            &=\pi(Pv)(\cA_\tau-\ca(0)\cdot{}^t{\bf 1}_\tau)^{-1}\pi(Q_0w)\mod \Z.
\end{align}
Moreover, the inverse of the isomorphism (\ref{DualIsom}) is given by
\begin{equation}
\Z^{n\times 1}/\Z {}^t(A_{\tau}-\ca(0)\cdot{}^t{\bf 1}_{\tau})\ni\tilde{\bf k}\mapsto
\begin{pmatrix}
0\\
\tilde{\bf k}
\end{pmatrix}
\in
\Z^{(n+1)\times 1}/\Z {}^tA_{\s}.
\end{equation}
Therefore, performing a deck transformation associated to $\xi_\tau\mapsto e^{2\pi\ii{}^t\tilde{\bf k}}\xi_\tau$ gives rise to an equation
\begin{equation}
f_{\s,\tilde{\bf k}}(z_0,z)=\frac{
e^{-\pi\ii(1-\gamma)}\exp\{
2\pi\ii
\transp{
\begin{pmatrix}
0\\
\tilde{\bf k}
\end{pmatrix}
}
(A_{\s})^{-1}d
\}
}
{\Gamma(\gamma)}\sum_{i=1}^r
\exp\{
2\pi\ii
\transp{
\begin{pmatrix}
0\\
\tilde{\bf k}
\end{pmatrix}
}
(A_{\s})^{-1}A_{\bs}{\bf k}(i)
\}
\varphi_{\s,{\bf k}(i)}(z_0,z)
\end{equation}
Thus, again by \cref{lem:pairing}, if we take $\tilde{\bf k}(1),\dots,\tilde{\bf k}(r)$ so that we have identities 
\begin{equation}
\left\{ \left[\begin{pmatrix}
0\\
\tilde{\bf k}(i)
\end{pmatrix}\right]\right\}_{i=1}^r=\Z^{(n+1)\times 1}/\Z {}^tA_{\s}\simeq\Z^{n\times 1}/\Z {}^t(A_\tau-\ca(0)\cdot{\bf 1}_\tau)=\{ [\tilde{\bf k}(i)]\}_{i=1}^r,
\end{equation}
and if $\gamma\notin\Z_{\leq 0}$, we can conclude that $\{f_{\s,\tilde{\bf k}(i)}\}$ is a set of $r$ linearly independent solutions of $M_{A}(d)$.

\vspace{2em}
\noindent
\underline{Case 3} We are now at the position of providing a method for constructing integration contours in the general case. Suppose we are given Euler integral (\ref{EulerInt2}). Denoting ${\bf e}_l\;(l=1,\dots,k)$ the standard basis of $\Z^{k\times 1}$,  we put $I_l=\left\{
\begin{pmatrix}
{\bf e}_l\\
\hline
{\bf a}^{(l)}(j)
\end{pmatrix}
\right\}_{j=1}^{N_l}$. We also put $\cA=(A_1\mid\cdots\mid A_k)$. This induces a partition of indices 
\begin{equation}\label{partition}
\{1,\dots,N\}=I_1\cup\dots\cup I_k.
\end{equation}
For any $l=1\dots,k$, we define $\ca(j)\;(j\in I_l)$ by $\ca(j)={\bf a}^{(l)}(j)$ so that we have an equality
\begin{equation}
{\bf a}(j)=
\begin{pmatrix}
{\bf e}_l\\
\hline
\ca(j)
\end{pmatrix}.
\end{equation}
Take an $n+k$ simplex $\s\subset\{1,\dots,N\}$. According to the partition (\ref{partition}), we have an induced partition $\s=\s^{(1)}\cup\dots\cup\s^{(k)}$, where $\s^{(l)}=\s\cap I_l$. Since $\det A_\s\neq 0$, we have, for any $l=1,\dots,k$, $\s^{(l)}\neq 0$. Now choose $k$ labeled points $i^{(l)}\in\s^{(l)}\;(l=1,\dots,k)$ and set $\tau^{(l)}=\s^{(l)}\setminus\{ i^{(l)}\}$, $\tau=\tau^{(1)}\cup\cdots\cup\tau^{(k)}$, and $\s_0=\{i^{(1)},\dots,i^{(k)}\}$. Introducing a new variable $w_j=z_{i^{(l)}}^{-1}z_j\;(j\in I_l)$, we rewrite (\ref{EulerInt2}) into a convenient form:
\begin{equation}
f(z)=\frac{z_{\s_0}^{-\gamma}}{(2\pi\ii)^{n+k}}\int \prod_{l=1}^k\left( 1+\sum_{j\in I_l\setminus\{ i^{(l)}\}}w_jx^{\ca(j)-\ca(i^{(l)})}\right)^{-\gamma_l}x^{c-\cA_{\s_0}\gamma-{\bf 1}}dx.
\end{equation}

\noindent
Now, the covering transform associated to $(\s,\s_0)$ is defined by
\begin{equation}
p:\T^n\ni x\mapsto \xi_\tau=\left((e^{-\pi\ii}w_i x^{\ca(i)-\ca(i^{(1)})})_{i\in\tau_1},\dots,(e^{-\pi\ii}w_i x^{\ca(i)-\ca(i^{(k)})})_{i\in\tau_k}\right)\in\T^\tau.
\end{equation}
This is also abbreviated as
\begin{equation}
p:\T^n\ni x\mapsto \xi_\tau=e^{-\pi\ii{}^t{\bf 1}_\tau}z_{\s_0}^{-S}z_\tau x^{\cA_\tau-\cA_{\s_0}S}\in\T^\tau,
\end{equation}
where $S$ is a stair matrix
\begin{equation}
S=(\overbrace{{\bf e_1}\mid\cdots\mid{\bf e}_1}^{|\tau^{(1)}|\text{ times}}
\mid\cdots\mid
\overbrace{{\bf e_k}\mid\cdots\mid{\bf e}_k}^{|\tau^{(k)}|\text{ times}}
)\in\Z^{k\times \tau}.
\end{equation}
For brevity, we put $w_\tau=z_{\s_0}^{-S}z_\tau.$ Straightforward computations as Case 1 and Case 2 give rise to a formula
\begin{align}
f(z)=&\frac{z_{\s_0}^{-\gamma}(e^{-\pi\ii{\bf 1}_\tau}w_\tau)^{-(\cA_\tau-\cA_{\s_0}S)^{-1}(c-\cA_{\s_0}\gamma)}}{(2\pi\ii)^{n+k}}\times\nonumber\\
      &\int_\gamma\prod_{l=1}^k\left( 1-\sum_{i\in\tau^{(l)}}\xi_i+\sum_{j\in \bs^{(l)}}(e^{-\pi\ii{\bf 1}_\tau}w_\tau)^{-(\cA_\tau-\cA_{\s_0}S)^{-1}\left(\ca(j)-\ca(i^{(l)})\right)}w_j\xi_\tau^{(\cA_\tau-\cA_{\s_0}S)^{-1}\left(\ca(j)-\ca(i^{(l)})\right)}\right)^{-\gamma_l}\times\nonumber\\
      &\xi_\tau^{(\cA_\tau-\cA_{\s_0}S)^{-1}(c-\cA_{\s_0}\gamma)-{\bf 1}}d\xi_\tau.\label{ConvEulerInt}
\end{align}
Note that these computations correspond to an identity
\begin{equation}
M_A(d)=M_{Q_0A}(Q_0d),
\end{equation}
where $Q_0$ is given by
\begin{equation}
Q_0=
\left(
\begin{array}{c|c}
I_k&O\\
\hline
-A_{\s_0}&I_n
\end{array}
\right).
\end{equation}
\begin{lem}\label{lem:matrix}
For any $j\in I_l$ one has an equality
\begin{equation}
A_\s^{-1}{\bf a}(j)=
\begin{pmatrix}
{\bf e}_l-S(\cA_\tau-\cA_{\s_0}S)^{-1}\left(\mathring{\bf a}(j)-\ca(i^{(l)})\right)\\
(\cA_\tau-\cA_{\s_0}S)^{-1}\left(\ca(j)-\ca(i^{(l)})\right)
\end{pmatrix}.
\end{equation}
In particular, one has
\begin{align}
w_\tau^{-(\cA_\tau-\cA_{\s_0}S)^{-1}\left({\bf a}(j)-{\bf a}(i^{(l)})\right)}w_j&=z_{i^{(l)}}^{-1}z_{\s_0}^{S\left(\cA_\tau-\cA_{\s_0}S\right)^{-1}(\ca(j)-\ca(i^{(l)}))}z_\tau^{-\left(\cA_\tau-\cA_{\s_0}S\right)^{-1}(\ca(j)-\ca(i^{(l)}))}z_j\\
&=z_\s^{-A_\s^{-1}{\bf a}(j)}z_j
\end{align}
\end{lem}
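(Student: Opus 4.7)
The plan is to exploit the block structure of $A_\sigma$ in the ordering where the $\sigma_0$-columns come first and the $\tau$-columns come second. In this ordering, since for $j \in I_l$ we have ${\bf a}(j) = \transp{({\bf e}_l, \ca(j))}$ and in particular ${\bf a}(i^{(l)}) = \transp{({\bf e}_l, \ca(i^{(l)}))}$, the matrix takes the form
\begin{equation}
A_\sigma = \left(\begin{array}{c|c} I_k & S \\ \hline \cA_{\sigma_0} & \cA_\tau \end{array}\right).
\end{equation}
Left-multiplying by the already-introduced matrix $Q_0$ kills the lower-left block and yields the block upper triangular matrix
\begin{equation}
Q_0 A_\sigma = \left(\begin{array}{c|c} I_k & S \\ \hline O & \cA_\tau - \cA_{\sigma_0}S \end{array}\right),
\end{equation}
whose nonsingularity is automatic since $\det A_\sigma \neq 0$ and $\det Q_0 = 1$.

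Next, I would invert this block upper triangular matrix by inspection and then compute $A_\sigma^{-1} = (Q_0 A_\sigma)^{-1} Q_0$, obtaining
\begin{equation}
A_\sigma^{-1} = \left(\begin{array}{c|c} I_k + S(\cA_\tau - \cA_{\sigma_0}S)^{-1}\cA_{\sigma_0} & -S(\cA_\tau - \cA_{\sigma_0}S)^{-1} \\ \hline -(\cA_\tau - \cA_{\sigma_0}S)^{-1}\cA_{\sigma_0} & (\cA_\tau - \cA_{\sigma_0}S)^{-1} \end{array}\right).
\end{equation}
Applying this to ${\bf a}(j) = \transp{({\bf e}_l, \ca(j))}$ and using the identity $\cA_{\sigma_0}{\bf e}_l = \ca(i^{(l)})$, the terms combine into $-(\cA_\tau - \cA_{\sigma_0}S)^{-1}(\ca(j) - \ca(i^{(l)}))$ in the bottom block and ${\bf e}_l - S(\cA_\tau - \cA_{\sigma_0}S)^{-1}(\ca(j) - \ca(i^{(l)}))$ in the top block, which is exactly the claimed expression for $A_\sigma^{-1}{\bf a}(j)$.

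For the ``in particular'' clause, I would substitute the definitions $w_\tau = z_{\sigma_0}^{-S} z_\tau$ and, for $j \in I_l$, $w_j = z_{i^{(l)}}^{-1} z_j$. Writing $\alpha = (\cA_\tau - \cA_{\sigma_0}S)^{-1}(\ca(j) - \ca(i^{(l)}))$ for brevity, a direct expansion gives
\begin{equation}
w_\tau^{-\alpha} w_j = z_{\sigma_0}^{S\alpha} z_\tau^{-\alpha} z_{i^{(l)}}^{-1} z_j,
\end{equation}
while from the first part
\begin{equation}
z_\sigma^{-A_\sigma^{-1}{\bf a}(j)} = z_{\sigma_0}^{-{\bf e}_l + S\alpha} z_\tau^{-\alpha} = z_{i^{(l)}}^{-1} z_{\sigma_0}^{S\alpha} z_\tau^{-\alpha},
\end{equation}
so the two sides agree after multiplying the latter by $z_j$. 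The whole argument is essentially bookkeeping with block matrices and monomial exponents; the only place where a little care is needed is matching the partition of $\sigma$ into $\sigma_0 \cup \tau$ against the global index conventions for $z_\sigma$, so the main subtlety is notational rather than mathematical.
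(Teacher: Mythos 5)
Your proposal is correct and follows essentially the same route as the paper: both use $Q_0$ to block-triangularize $A_\sigma$ via $Q_0 A_\sigma = \left(\begin{smallmatrix} I_k & S \\ O & \cA_\tau - \cA_{\sigma_0}S\end{smallmatrix}\right)$, then invert and apply to ${\bf a}(j)$. The only cosmetic difference is that you multiply out $A_\sigma^{-1} = (Q_0A_\sigma)^{-1}Q_0$ explicitly before applying it to ${\bf a}(j)$, whereas the paper applies the two factors $(Q_0A_\sigma)^{-1}$ and $Q_0$ to the vector one at a time.
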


\begin{proof}
Since $A_\s^{-1}{\bf a}(j)=(Q_0A_\s)^{-1}Q_0{\bf a}(j)$, we are reduced to compute $Q_0A_\s^{-1}$ and $Q_0{\bf a}(j)$. By definition, we have
\begin{equation}
Q_0{\bf a}(j)=
\begin{pmatrix}
{\bf e}_l\\
\hline
\ca(j)-\ca(i^{(l)})
\end{pmatrix}.
\end{equation}
On the other hand, we have
\begin{equation}
Q_0A_\s=
\left(
\begin{array}{c|c}
I_k&O\\
\hline
-\cA_{\s_0}& I_n
\end{array}
\right)
\left(
\begin{array}{c|c|c}
{}^t{\bf 1}_{\s^{(1)}}&\ldots&O\\
\hline
\vdots&\ddots&\vdots\\
\hline
O&\ldots&{}^t{\bf 1}_{\s^{(k)}}\\
\hline
\cA_{\s^{(1)}}& \ldots&\cA_{\s^{(k)}}
\end{array}
\right)
=
\left(
\begin{array}{c|c}
I_{\s_0}&S\\
\hline
O_{\s_0}& \cA_\tau-\cA_{\s_0}S
\end{array}
\right),
\end{equation}
which leads to a formula
\begin{equation}
(Q_0A_\s)^{-1}=
\left(
\begin{array}{c|c}
I_{\s_0}&-S(\cA_\tau-\cA_{\s_0}S)^{-1}\\
\hline
O_{\s_0}&(\cA_\tau-\cA_{\s_0}S)^{-1}
\end{array}
\right).
\end{equation}
\end{proof}
\noindent
In the same manner, we can prove 
\begin{equation}\label{exponent}
z_{\s_0}^{-\gamma}w_\tau^{-(\cA_\tau-\cA_{\s_0}S)^{-1}(c-\cA_{\s_0}\gamma)}=z_\s^{-A_\s d}.
\end{equation}
\noindent
Now, we take our cycle as a product of Pochhammer contours $\gamma=P_{\xi_{\tau^{(1)}}}\times\dots\times P_{\xi_{\tau^{(k)}}}$. From \cref{lem:matrix} and (\ref{exponent}), we can confirm that the integral (\ref{ConvEulerInt}) is convergent if $z\in U_\s$. For convenience, let us introduce a matrix 
\begin{equation}
T=(\overbrace{{\bf e_1}\mid\cdots\mid{\bf e}_1}^{|\bs^{(1)}|\text{ times}}
\mid\cdots\mid
\overbrace{{\bf e_k}\mid\cdots\mid{\bf e}_k}^{|\bs^{(k)}|\text{ times}}
)\in\Z^{k\times \bs}.
\end{equation}
Expanding the integrand into a series as in Case 1 and Case 2, we obtain a formula
\begin{align}
f(z)=&\frac{z_{\s}^{-A_\s d}}{(2\pi\ii)^{n+k}}\sum_{{\bf m}\in\Z^{\bs}_{\geq 0}}\frac{(-1)^{|{\bf m}|}\displaystyle\prod_{l=1}^k(\gamma_l)_{|{\bf m}_l|}}{{\bf m}!}
e^{\pi\ii{}^t{\bf 1}_\tau(\cA_\tau-\cA_{\s_0}S)^{-1}(\cA_{\bs}-\cA_{\s_0}T){\bf m}}
(z_\s^{-A_\s^{-1}A_{\bs}}z_{\bs})^{\bf m}\times\nonumber\\
&\prod_{l:\tau^{(l)}\neq\varnothing}{\rm I}_l({\bf m}),
\end{align}
where ${\rm I}_l({\bf m})$ is defined by the formula
\begin{equation}
{\rm I}_l({\bf m})=
\begin{cases}
\int_{P_{\xi_{\tau^{(l)}}}}\left( 1-\displaystyle\sum_{i\in\tau^{(l)}}\xi_i\right)^{-\gamma_l-|{\bf m}_l|}\displaystyle\prod_{i\in\tau^{(l)}}\xi_i^{{}^t{\bf e}_i(\cA_\tau-\cA_{\s_0}S)^{-1}(c-\cA_{\s_0}\gamma+(\cA_{\bs}-\cA_{\s_0}T){\bf m})}d\xi_{\tau^{(l)}}&(\bs^{(l)}\neq\varnothing)\\
\int_{P_{\xi_{\tau^{(l)}}}}\left( 1-\displaystyle\sum_{i\in\tau^{(l)}}\xi_i\right)^{-\gamma_l}\displaystyle\prod_{i\in\tau^{(l)}}\xi_i^{{}^t{\bf e}_i(\cA_\tau-\cA_{\s_0}S)^{-1}(c-\cA_{\s_0}\gamma+(\cA_{\bs}-\cA_{\s_0}T){\bf m})}d\xi_{\tau^{(l)}}&(\bs^{(l)}=\varnothing)
\end{cases}
\end{equation}
Computing as in \S\ref{SectionResidue} and substituting the formula
\begin{equation}
(\gamma_l)_{|{\bf m}_l|}=\frac{2\pi\ii e^{-\pi\ii\gamma_l}(-1)^{|{\bf m}_l|}}{\Gamma(\gamma_l)\Gamma(1-\gamma_l-|{\bf m}_l|)(1-e^{-2\pi\ii\gamma_l})},
\end{equation}
we obtain the basic formula
\begin{equation}
f(z)=\frac{\displaystyle\prod_{l:\tau^{(l)}\neq\varnothing}e^{-\pi\ii(1-\gamma_l)}\displaystyle\prod_{l:\tau^{(l)}=\varnothing}e^{-\pi\ii\gamma_l}}{\Gamma(\gamma_1)\dots\Gamma(\gamma_k)\displaystyle\prod_{l:\tau^{(l)}=\varnothing}(1-e^{-2\pi\ii\gamma_l})}\sum_{i=1}^r\varphi_{\s,{\bf k}(i)}(z).
\end{equation}
\noindent
We set $\Gamma_{\ts,\s_0,0}$ to be $p^*(P_\s)$. As before, for any $\tilde{\bf k}\in\Z^{\s}$, we can define the deck transformation $\Gamma_{\ts,\s,\tilde{\bf k}}$ associated to $\xi_\tau\mapsto e^{2\pi\ii{}^t\tilde{\bf k}}\xi_\tau$. Through the isomorphism 
\begin{equation}
\Z/\Z {}^t(\cA_\tau-\cA_{\s_0}S)\ni\tilde{\bf k}\mapsto 
\begin{pmatrix}
0_{\s_0}\\
\tilde{\bf k}
\end{pmatrix}
\in\Z^{(n+k)}/\Z {}^tA_{\s},
\end{equation}
we can show that 
\begin{align}
 &f_{\tilde{\bf k}}(z)\\
=&\frac{1}{(2\pi\ii)^{n+k}}\int_{\Gamma_{\ts,\s,\tilde{\bf k}}} h_{1,z^{(1)}}(x)^{-\gamma_1}\cdots h_{k,z^{(k)}}(x)^{-\gamma_k}x^{c-{\bf 1}}dx\\
=&\frac{\displaystyle\prod_{l:\tau^{(l)}\neq\varnothing}e^{-\pi\ii(1-\gamma_l)}\displaystyle\prod_{l:\tau^{(l)}=\varnothing}e^{-\pi\ii\gamma_l}
\exp\left\{
2\pi\ii\transp{
\begin{pmatrix}
0_{\s_0}\\
\tilde{\bf k}
\end{pmatrix}
}
A_{\s}^{-1}d
\right\}
}{\Gamma(\gamma_1)\dots\Gamma(\gamma_k)\displaystyle\prod_{l:\tau^{(l)}=\varnothing}(1-e^{-2\pi\ii\gamma_l})}\times\nonumber\\
 &\sum_{i=1}^r
\exp\left\{
2\pi\ii\transp{
\begin{pmatrix}
0_{\s_0}\\
\tilde{\bf k}
\end{pmatrix}
}
A_{\s}^{-1}A_{\bs}{\bf k}(i)
\right\}
\varphi_{\s,{\bf k}}(z).
\end{align}
Summing up, we obtain

\begin{thm}\label{thm:fundamentalthm3}
Take a regular triangulation $T$ of $A$. Assume that the parameter vector $d$ is very generic with respect to any $\sigma\in T$ and that for any $l=1,\dots,k$, one has $\gamma_l\notin\Z_{\leq 0}$. For each simplex $\s$, one chooses $k$ labeled points $i^{(l)}\in\s^{(l)}\;(l=1,\dots,k)$ and set $\s_0=\{i^{(1)},\dots,i^{(k)}\}$. Then, if one puts
\begin{equation}
f_{\s,\tilde{\bf k}(j)}(z)=\frac{1}{(2\pi\ii)^{n+k}}\int_{\Gamma_{\s,\s_0,\tilde{\bf k}(j)}} h_{1,z^{(1)}}(x)^{-\gamma_1}\cdots h_{k,z^{(k)}}(x)^{-\gamma_k}x^{c-{\bf 1}}dx,
\end{equation}
$\bigcup_{\s\in T}\{ f_{\sigma,\tilde{\bf k}(j)}(z)\}_{j=1}^{r}$ is a basis of solutions of $M_A(d)$ on the non-empty open set $U_T$, where $\{\tilde{\bf k}(j)\}_{j=1}^{r}$ is a complete system of representatives $\Z^{n}/\Z{}^t(\cA_\tau-\cA_{\s_0}S)$. Moreover, for each $\sigma\in T,$ one has a transformation formula 
\begin{equation}
\begin{pmatrix}
f_{\sigma,\tilde{\bf k}(1)}(z)\\
\vdots\\
f_{\sigma,\tilde{\bf k}(r)}(z)
\end{pmatrix}
=
T_\sigma
\begin{pmatrix}
\varphi_{\sigma,{\bf k}(1)}(z)\\
\vdots\\
\varphi_{\sigma,{\bf k}(r)}(z)
\end{pmatrix}.
\end{equation}
Here, $T_\sigma$ is an $r\times r$ matrix given by 
\begin{align}
T_\sigma=&\frac{\displaystyle\prod_{l:\tau^{(l)}\neq\varnothing}e^{-\pi\ii(1-\gamma_l)}\displaystyle\prod_{l:\tau^{(l)}=\varnothing}e^{-\pi\ii\gamma_l}}{\Gamma(\gamma_1)\dots\Gamma(\gamma_k)\displaystyle\prod_{l:\tau^{(l)}=\varnothing}(1-e^{-2\pi\ii\gamma_l})}
\diag\Big( \exp\left\{
2\pi\ii\transp{
\begin{pmatrix}
0_{\s_0}\\
\tilde{\bf k}(i)
\end{pmatrix}
}
A_{\s}^{-1}d
\right\}\Big)_{i=1}^{r}\times\nonumber\\
 &\Big(
\exp\left\{
2\pi\ii\transp{
\begin{pmatrix}
0_{\s_0}\\
\tilde{\bf k}(i)
\end{pmatrix}
}
A_{\s}^{-1}A_{\bs}{\bf k}(j)
\right\}
\Big)_{i,j=1}^{r}.
\end{align}
\end{thm}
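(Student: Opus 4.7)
The plan is to distill the theorem from the computation carried out in Case 3 above. There are four forward-looking steps: (1) derive the basic integral formula for $f_{\sigma,0}(z)$ in terms of $\Gamma$-series; (2) propagate it under deck transformations to obtain $f_{\sigma,\tilde{\bf k}}(z)$; (3) show the resulting transition matrix $T_\sigma$ is invertible using the pairing of \cref{lem:pairing}; and (4) count dimensions to conclude that the union over $\sigma\in T$ spans the solution space of $M_A(d)$ on $U_T$.

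First I would carry out the computation of $f_{\sigma,0}(z)$ via the covering transform $\xi_\tau = e^{-\pi\sqrt{-1}\transp{{\bf 1}_\tau}}w_\tau x^{\cA_\tau - \cA_{\sigma_0}S}$ and the product cycle $\gamma = \prod_{l=1}^k P_{\xi_{\tau^{(l)}}}$. Absolute convergence on $U_\sigma$ follows from the small-parameter estimate on $|w_\tau^{-(\cA_\tau-\cA_{\sigma_0}S)^{-1}(\ca(j)-\ca(i^{(l)}))}w_j|$ for $j\in\bs$, and the resulting Pochhammer integrals are evaluated by \cref{lemma:Beukers}. Expanding each factor $\bigl(1-\sum_{i\in\tau^{(l)}}\xi_i+\cdots\bigr)^{-\gamma_l}$ in a geometric-Taylor series and using the identifications of \cref{lem:matrix} to rewrite the exponent vectors $A_\sigma^{-1}{\bf a}(j)$ in their block form, one matches each ${\bf m}$-th coefficient to that of $\varphi_{\sigma,{\bf k}(i)}(z)$, up to the scalar prefactor displayed in the statement.

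Next I would apply a deck transformation $\xi_\tau\mapsto e^{2\pi\sqrt{-1}\transp{\tilde{\bf k}}}\xi_\tau$ with $\tilde{\bf k}\in\Z^n$ to produce the family $\{f_{\sigma,\tilde{\bf k}}\}$; the extra phases depend on $\transp{\tilde{\bf k}}(\cA_\tau - \cA_{\sigma_0}S)^{-1}(\ca(j)-\ca(i^{(l)}))$, which by \cref{lem:matrix} equals $\transp{\binom{0_{\sigma_0}}{\tilde{\bf k}}}A_\sigma^{-1}{\bf a}(j)$. The embedding $\tilde{\bf k}\mapsto\binom{0_{\sigma_0}}{\tilde{\bf k}}$ therefore transports the pairing of \cref{lem:pairing} on $\Z^{n+k}/\Z\transp{A_\sigma}\times\Z^{n+k}/\Z A_\sigma$ to the analogous non-degenerate pairing on $\Z^n/\Z\transp{(\cA_\tau-\cA_{\sigma_0}S)}\times\Z^n/\Z(\cA_\tau-\cA_{\sigma_0}S)$. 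Since $|\det A_\sigma|=|\det(\cA_\tau-\cA_{\sigma_0}S)|=r$, the embedding is an isomorphism of finite abelian groups, and orthogonality of irreducible characters makes the matrix $\bigl(\exp\{2\pi\sqrt{-1}\transp{\binom{0_{\sigma_0}}{\tilde{\bf k}(i)}}A_\sigma^{-1}A_{\bs}{\bf k}(j)\}\bigr)_{i,j=1}^r$ a scalar multiple of a unitary matrix.

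Finally, the scalar prefactor in $T_\sigma$ is nonzero: $\gamma_l\notin\Z_{\leq 0}$ keeps each $\Gamma(\gamma_l)^{-1}$ finite and nonzero, and the very generic hypothesis on $d$ prevents $1-e^{-2\pi\sqrt{-1}\gamma_l}$ from vanishing whenever the factor enters (this happens only for $l$ with $\tau^{(l)}=\varnothing$, forcing $\gamma_l$ to be a coordinate of $A_\sigma^{-1}d$). Hence $T_\sigma$ is invertible. Combined with the linear independence of $\{\varphi_{\sigma,{\bf k}(i)}\}_{i=1}^r$ from \cref{prop:independence}, this yields $r=\vol_\Z(\sigma)$ independent holomorphic solutions on $U_\sigma\supset U_T$, and $\sum_{\sigma\in T}r = \vol_\Z(\Delta_A) = \rank M_A(d)$ produces the claimed basis on $U_T$. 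The principal obstacle lies in step (2)---verifying that truncation and block-matrix manipulation transfer the non-degenerate pairing correctly---but \cref{lem:matrix} reduces this to routine block algebra, leaving the remainder of the argument as an almost verbatim rerun of \cref{thm:fundamentalthm1} and \cref{thm:fundamentalthm2}.
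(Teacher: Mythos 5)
Your proposal follows the same route as the paper: the theorem is essentially a summary of the Case~3 computation in \S\ref{SectionEuler}, and you correctly reconstruct the covering transform $\xi_\tau = e^{-\pi\ii\transp{{\bf 1}_\tau}}w_\tau x^{\cA_\tau-\cA_{\sigma_0}S}$, the product Pochhammer cycle, the expansion matching $\Gamma$-series term by term, the deck-transformation phases, and the appeal to \cref{lem:pairing} for invertibility. One small point you make explicit that the paper leaves implicit is worth noting: when $\tau^{(l)}=\varnothing$ the factor $1-e^{-2\pi\ii\gamma_l}$ must be nonzero, and your observation that in this case ${}^t{\bf e}_{i^{(l)}}A_\sigma^{-1}d=\gamma_l$ (so very genericity forces $\gamma_l\notin\Z$, not merely $\gamma_l\notin\Z_{\leq 0}$) correctly closes that gap.
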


\begin{exa}
We consider a $5\times 9$ matrix 
\begin{equation}
A=
\left(
\begin{array}{ccc|ccc|ccc}
1&1&1&0&0&0&0&0&0\\
\hline
0&0&0&1&1&1&0&0&0\\
\hline
0&0&0&0&0&0&1&1&1\\
\hline
0&1&0&0&1&0&0&1&0\\
0&0&1&0&0&1&0&0&1
\end{array}
\right).
\end{equation}
We take a simplex $\s=\{ 2,4,5,6,7\}$. It is easy to confirm that $\det A_\s=1$. The associated Euler integral which is known to be a solution of $E(3,6)$ (\cite{AK}), is given by
\begin{align}
f_{\s}(z)&=\frac{1}{(2\pi\ii)^5}\int_\Gamma(z_1+z_2x+z_3y)^{-\gamma_1}(z_4+z_5x+z_6y)^{-\gamma_2}(z_7+z_8x+z_9y)^{-\gamma_3}x^{c_1-1}y^{c_2-1}dxdy\\
 &=\frac{z_2^{-\gamma_1}z_4^{-\gamma_2}z_7^{-\gamma_3}}{(2\pi\ii)^5}\int_\Gamma(1+w_1x^{-1}+w_3x^{-1}y)^{-\gamma_1}(1+w_5x+w_6y)^{-\gamma_2}(1+w_8x+w_9y)^{-\gamma_3}\times\nonumber\\
 &\hspace{1.5em}x^{c_1-\gamma_1-1}y^{c_2-1}dxdy.
\end{align}
Here, $w_1,w_3,w_5,w_6,w_8,w_9$ are define by
\begin{equation}
w_1=z_2^{-1}z_1,\;\;w_3=z_2^{-1}z_3,\;\;w_5=z_4^{-1}z_5,\;\;w_6=z_4^{-1}z_6,\;\;w_8=z_7^{-1}z_8,\;\;w_9=z_7^{-1}z_9
\end{equation}
Introducing a new coordinate $(\xi,\eta)$ by
\begin{equation}
\begin{cases}
\xi=e^{-\pi\ii}w_3x\\
\eta=e^{-\pi\ii}w_6y,
\end{cases}
\end{equation}
we have
\begin{align}
f_\s(z)&=\frac{e^{\pi\ii(c_1+c_2-\gamma_1)}z_2^{-\gamma_1}z_4^{c_1+c_2-\gamma_1-\gamma_2}z_7^{-\gamma_3}z_5^{\gamma_1-c_1}z_6^{-c_2}}{(2\pi\ii)^5}\times\nonumber\\
 &\hspace{1.5em}\int_\gamma(1+e^{-\pi\ii}w_1w_5\xi^{-1}+w_3w_5w_6^{-1}\xi^{-1}\eta)^{-\gamma_1}(1+e^{\pi\ii}w_5^{-1}w_8\xi+w_6^{-1}w_9\eta)^{-\gamma_3}\times\nonumber\\
 &\hspace{1.5em}(1-\xi-\eta)^{-\gamma_2}\xi^{c_1-\gamma_1-1}\eta^{c_2-1}d\xi d\eta.
\end{align}
We put $l_1=\{ 1+e^{-\pi\ii}w_1w_5\xi^{-1}+w_3w_5w_6^{-1}\xi^{-1}\eta=0\}$, $l_2=\{1=\xi+\eta\}$, $l_3=\{ 1+e^{\pi\ii}w_5^{-1}w_8\xi+w_6^{-1}w_9\eta=0\}.$ The arrangement of branching locus of the integrand is described in the following figure.

\begin{figure}[t]
\begin{center}
\begin{tikzpicture}
\draw (0,0) node[below right]{O}; 
\draw[thick, ->] (-4,0)--(3.2,0) node[right]{$\xi$} ; 
\draw[thick, ->] (0,-1.2)--(0,3.2) node[above]{$\eta$} ; 
\draw[-] (3,-1)--(-1,3) node[left]{$\{ \xi+\eta=1\}=l_2$};
\draw[-] (1,4)--(-3.5,-0.5) node[left]{$l_3$};
\draw[-] (-0.2,3)--(0.2,-1) node[right]{$l_1$};
\end{tikzpicture}
\caption{arrangement of hyperplanes}
\end{center}
\end{figure}

\noindent
Taking limit $w_1w_5,w_3w_5w_6^{-1}\rightarrow0$ amounts to taking limit $l_1\rightarrow \{ \xi=0\}$, while taking limit $w_5^{-1}w_8,w_6^{-1}w_9\rightarrow0$ amounts to taking limit $l_3\rightarrow\{\text{hyperplane at }\infty \}.$ Thus, our Pochhammer cycle $\gamma$ associated to the simplex $\{(\xi,\eta)\mid \xi\geq 0,\eta\geq 0,\xi+\eta\leq 1\}$ encircles the divisor $l_1$ while it does not encircle the divisor $l_3$. The $\Gamma$-series associated to $\s$ is explicitly given by a formula
\begin{align}
\varphi_{\s}(z)&=\sum_{m_1,\dots,m_4\in\Z_{\geq 0}}
\frac{1}{\Gamma(1-\gamma_1-m_1-m_2)\Gamma(1-\gamma_1-\gamma_2+c_1+c_2-m_1+m_3+m_4)\Gamma(1-\gamma_3-m_3-m_4)}\times\nonumber\\
 &\hspace{1.5em}\frac{
(w_1w_5)^{m_1}(w_3w_5w_6^{-1})^{m_2}(w_5^{-1}w_8)^{m_3}(w_6^{-1}w_9)^{m_4}
}
{
\Gamma(1+\gamma_1-c_1+m_1+m_2-m_3)\Gamma(1-c_2-m_2-m_4)m_1!m_2!m_3!m_4!
}.
\end{align}
The relation between $f_\s(z)$ and $\varphi_\s(z)$ is given by the formula
\begin{equation}
f_\s(z)=\frac{e^{-\pi\ii(1+\gamma_1-\gamma_2+\gamma_3)}}{\Gamma(\gamma_1)\Gamma(\gamma_2)\Gamma(\gamma_3)(1-e^{-2\pi\ii\gamma_1})(1-e^{-2\pi\ii\gamma_3})}\varphi_\s(z).
\end{equation}

\end{exa}

\end{section}

\begin{section}{Equivalence of various integral representations}\label{SectionEquivalence}
In this section, we provide a $D$-module theoretic background of integral representations discussed so far. The form of the integrals we consider in this section are the followings:

\begin{equation}\label{MixedInt}
\int h_{1,z^{(1)}}(x)^{-\gamma_1}\cdots h_{k,z^{(k)}}(x)^{-\gamma_k}x^{c-1} e^{h_{0,z^{(0)}}(x)}dx,
\end{equation}
\begin{equation}\label{MixedLaplaceInt}
\int  \exp\Big\{h_{0,z^{(0)}}(x)+\sum_{l=1}^k y_lh_{l,z^{(l)}}(x)\Big\}y^{\gamma-1}x^{c-1}dydx,
\end{equation}
\begin{equation}\label{MixedResidueInt}
\int\frac{e^{h_{0,z^{(0)}}(x)}y^{\gamma -1}x^{c-1}}{\Big(1-y_1h_{1,z^{(1)}}(x)\Big)\cdots \Big(1-y_kh_{k,z^{(k)}}(x)\Big)}dydx.
\end{equation}

\noindent
Here, $h_{l,z^{(l)}}(x)$ $(l=0,\dots, k)$ are Laurent polynomials of the form
\begin{equation}
h_{l,z^{(l)}}(x)=\displaystyle\sum_{j=1}^{N_l}z^{(l)}_jx^{{\bf a}^{(l)}(j)}.
\end{equation}
\noindent
In order to formulate our result, let us revise some basic notation and results of algebraic $\DD$-modules. For their proofs, see \cite{Bo} or \cite{HTT}. Let $X$ and $Y$ be smooth algebraic varieties over $\C$ and let $f:X\rightarrow Y$ be a morphism. We denote $\DD_X$ the sheaf of linear partial differential operators on $X$ and denote $D^b_h(\DD_X)$ (resp. $D^b_{r.h.}(\DD_X)$) the derived category of bounded complexes of left $\DD_X$-modules whose cohomologies are holonomic (resp. regular holonomic). For any object $N\in D^b_h(\DD_Y)$ (resp. $D^b_{r.h.}(\DD_Y)$) on $Y$, we define its inverse image $\LL f^*N\in D^b_h(\DD_X)$ (resp. $D^b_{r.h.}(\DD_X)$) and its shifted inverse image $f^\dagger N\in D^b_h(\DD_X)$ (resp. $D^b_{r.h.}(\DD_X)$) with respect to $f$ by the formula
\begin{equation}
\LL f^*N=\DD_{X\rightarrow Y}\overset{\LL}{\underset{f^{-1}\DD_Y}{\otimes}}f^{-1}N \;\;\;(\text{resp. } f^{\dagger}N=\LL f^*N[\dim X-\dim Y]),
\end{equation} 
where $\DD_{X\rightarrow Y}$ is the transfer module $\mathcal{O}_X\otimes_{f^{-1}\mathcal{O}_Y}f^{-1}\DD_Y.$ Similarly, for any object $M\in D^b_h(\DD_X)$ (resp. $D^b_{r.h.}(\DD_X)$), we define its direct image $\int_fM\in D^b_h(\DD_Y)$ (resp. $D^b_{r.h.}(\DD_Y)$) by
\begin{equation}
\int_fM=\R f_*(\DD_{Y\leftarrow X}\overset{\LL}{\underset{\DD_X}{\otimes}}M),
\end{equation}
where $\DD_{Y\leftarrow X}$ is the transfer module $\Omega_X\otimes_{\mathcal{O}_X}\DD_{X\rightarrow Y}\otimes_{f^{-1}\mathcal{O}_Y}f^{-1}\Omega_Y$. If $X=Y\times Z$ and $f:Y\times Z\rightarrow Y$ is the natural projection, the direct image can be computated in terms of (algebraic) relative de Rham complex
\begin{equation}
\int_fM\simeq \R f_*(\DR_{X/Y}(M)).
\end{equation}
In particular, if $Y=\{*\}$ (one point), and $M$ is a connection $M=(E,\nabla)$ on $Z$, then for any integer $p$, we have a canonical isomorphism
\begin{equation}
\Homo^p\left( \int_fM\right)\simeq\mathbb{H}_{dR}^{p+\dim Z}(Z,(E,\nabla)),
\end{equation} 
where $\mathbb{H}_{dR}$ denotes the algebraic de-Rham cohomology group.
For objects $M,M^\prime\in D^b_h(\DD_X)$ (resp. $D^b_{r.h.}(\DD_X)$) and $N\in D^b_h(\DD_Y)$ (resp. $D^b_{r.h.}(\DD_Y)$), the tensor product $M\overset{\mathbb{D}}{\otimes}M^\prime\in D^b_h(\DD_X)$ (resp. $D^b_{r.h.}(\DD_X)$) and external tensor product $M\boxtimes N\in D^b_h(\DD_{X\times Y})$ (resp. $D^b_{r.h.}(\DD_{X\times Y})$) are defined by
\begin{equation}
M\overset{\mathbb{D}}{\otimes}M^\prime=M\overset{\mathbb{L}}{\underset{\mathcal{O}_X}{\otimes}}M^\prime,\;\; M\boxtimes N=M\underset{\C}{\otimes}N.
\end{equation}
Let $Z$ be a smooth closed subvariety of $X$ and let $i:Z\hookrightarrow X$ and $j:X\setminus Z\hookrightarrow X$ be natural inclusions. Then, for any object $M\in D_h^b(\DD_X)$, there is a standard distinguished triangle
\begin{equation}\label{sdt}
\int_ii^\dagger M\rightarrow M\rightarrow\int_j j^\dagger M\overset{+1}{\rightarrow}.
\end{equation}

\noindent
For any (possibly multivalued) function $\varphi$ on $X$ such that $\varphi$ is nowhere-vanishing and that $\frac{d\varphi}{\varphi}$ belongs to $\Omega^1(X)$, we define a $\DD_X$-module $\mathcal{O}_X\varphi$ by twisting its action as
\begin{equation}
\theta\cdot h=\Big\{\theta-\Big(\frac{\theta\varphi}{\varphi}\Big)\Big\}h\;\;\;(h\in\mathcal{O}_X,\;\theta\in\Theta_X).
\end{equation}
For any $\DD_X$-module $M,$ we define $M\varphi$ by $M\varphi=M\underset{\mathcal{O}_X}{\otimes}\mathcal{O}_X\varphi.$

We begin with the equivalence of (\ref{MixedInt}) and (\ref{MixedLaplaceInt}). This isomorphism is nothing but an algebraic interpretation of the so-called Cayley trick. We will prove the following identity which is ``obvious'' from the definition of $\Gamma$ function.

\begin{prop}\label{prop:GammaInt}
Let $h:X\rightarrow\A^1$ be a non-zero regular function such that $h^{-1}(0)$ is smooth, $\pi:X\times (\Gm)_y\rightarrow X$ be the canonical projection, $j:X\setminus h^{-1}(0)\hookrightarrow X$ and $i:h^{-1}(0)\hookrightarrow X$ be inclusions, and let $\gamma\in\C\setminus\Z$ be a parameter. One has a canonical isomorphism

\begin{equation}
\int_{\pi}\mathcal{O}_{X\times(\Gm)_y}y^\gamma e^{yh}\simeq\int_j\mathcal{O}_{X\setminus h^{-1}(0)}h^{-\gamma}.
\end{equation}

\end{prop}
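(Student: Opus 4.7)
The plan is to compare both sides through the stratification of $X$ by $Z := h^{-1}(0)$ and its complement $U := X \setminus Z$, pulled back to the source $X \times \Gm$. Write $\tilde{\imath}:Z \times \Gm \hookrightarrow X \times \Gm$ and $\tilde{\jmath}:U \times \Gm \hookrightarrow X \times \Gm$ for the induced inclusions, and $\pi_Z,\pi_U$ for the restrictions of $\pi$ to each stratum. Apply the distinguished triangle (\ref{sdt}) on $X \times \Gm$ to $M := \mathcal{O}_{X \times \Gm} y^\gamma e^{yh}$ and push it forward by $\pi$. Using the functoriality identities $\pi \circ \tilde{\imath} = i \circ \pi_Z$ and $\pi \circ \tilde{\jmath} = j \circ \pi_U$, this produces a triangle on $X$:
\begin{equation*}
\int_i \int_{\pi_Z} \tilde{\imath}^\dagger M \longrightarrow \int_\pi M \longrightarrow \int_j \int_{\pi_U} \tilde{\jmath}^\dagger M \xrightarrow{+1}.
\end{equation*}

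The closed stratum contributes zero. Since $h \equiv 0$ on $Z$, the restriction $\tilde{\imath}^\dagger M$ is (up to shift) $\mathcal{O}_{Z \times \Gm} y^\gamma \cong \mathcal{O}_Z \boxtimes \mathcal{O}_\Gm y^\gamma$. K\"unneth for the projection $\pi_Z$ reduces $\int_{\pi_Z}\tilde{\imath}^\dagger M$ to $\mathcal{O}_Z \otimes_\C \int_{\Gm \to \mathrm{pt}} \mathcal{O}_\Gm y^\gamma$, and the two-term de Rham complex $[\C[y,y^{-1}] \xrightarrow{\partial_y - \gamma/y} \C[y,y^{-1}]]$ is acyclic because $\partial_y - \gamma/y$ acts on $y^n$ by the nonzero scalar $n - \gamma$ (here we use $\gamma \notin \Z$). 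Hence $\int_{\pi_Z}\tilde{\imath}^\dagger M = 0$, and the triangle collapses to $\int_\pi M \simeq \int_j \int_{\pi_U} \tilde{\jmath}^\dagger M$.

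For the open stratum, invertibility of $h$ on $U$ allows us to introduce the automorphism $\Phi : U \times \Gm \to U \times \Gm$, $(x,y) \mapsto (x, y/h(x))$, which commutes with $\pi_U$. A direct computation of the twist gives $\Phi^*(y^\gamma e^{yh}) = h^{-\gamma}\, y^\gamma e^y$, so $\tilde{\jmath}^\dagger M$ transports to the external product $(\mathcal{O}_U h^{-\gamma}) \boxtimes (\mathcal{O}_\Gm y^\gamma e^y)$. A second application of K\"unneth reduces the open contribution to the one-dimensional integral
\begin{equation*}
\int_{\Gm \to \mathrm{pt}} \mathcal{O}_\Gm y^\gamma e^y \simeq \C,
\end{equation*}
concentrated in degree $0$: the de Rham differential $\partial_y - \gamma/y - 1$ on $\C[y,y^{-1}]$ has trivial kernel (the recursion $f_n = (n+1-\gamma)f_{n+1}$ forces every nonzero Laurent solution to have infinite support) and one-dimensional cokernel generated by $1$ (the equation $(\partial_y - \gamma/y - 1)f = 1$ has no Laurent-polynomial solution, as one checks by propagating the coefficients downwards). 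Tensoring back with $\mathcal{O}_U h^{-\gamma}$ and pushing by $j$ yields the asserted isomorphism.

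The principal obstacle is precisely this pair of de Rham computations on $\Gm$ --- the vanishing of $\int_{\mathrm{pt}} \mathcal{O}_\Gm y^\gamma$ and the one-dimensional collapse of $\int_{\mathrm{pt}} \mathcal{O}_\Gm y^\gamma e^y$ --- both of which rest essentially on the assumption $\gamma \notin \Z$. Everything else (the distinguished triangle, functoriality of direct image, the $y \mapsto y/h$ coordinate change over $U$, and K\"unneth for a projection) is formal.
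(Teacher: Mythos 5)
Your proof is correct and uses the same essential ingredients as the paper: the excision distinguished triangle, the two de Rham computations on $\Gm$ (which are exactly Lemma \ref{gamma} of the paper, up to the sign of the twist), the coordinate change $y \mapsto y/h$ on the open stratum, and the K\"unneth-type factorization of the relative direct image. The organization differs slightly and is worth noting. The paper applies the standard distinguished triangle (\ref{sdt}) on the base $X$ to the object $\int_\pi M$, and then computes $i^\dagger\int_\pi M$ and $j^\dagger\int_\pi M$ by pulling the direct image back to each stratum via the base change theorem for the two Cartesian squares over $Z$ and over $X\setminus Z$. You instead apply the triangle on the source $X\times\Gm$ to $M$ itself, push the entire triangle forward by the exact functor $\int_\pi$, and rewrite the outer terms using only the functoriality $\int_{\pi\circ\tilde{\imath}}\simeq\int_i\circ\int_{\pi_Z}$ (and similarly for $\tilde{\jmath}$), avoiding base change altogether. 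This is a clean alternative: base change is replaced by direct-image functoriality, which is formal, and the analytic content is the same (vanishing of $\int_{\mathrm{pt}}\mathcal{O}_\Gm y^\gamma$ and one-dimensionality of $\int_{\mathrm{pt}}\mathcal{O}_\Gm y^\gamma e^y$, both relying on $\gamma\notin\Z$). One minor point of care: you write the de Rham operator with the opposite sign to the paper's $\nabla=\partial_y+\gamma/y+h$; this is harmless since $\gamma\notin\Z$ iff $-\gamma\notin\Z$, but it is worth flagging that your recursion matches your sign convention and not the one displayed in Lemma \ref{gamma}.
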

\noindent
For the proof, we insert the following elementary

\begin{lem}\label{gamma}
Let $pt:(\Gm)_y\rightarrow\{ *\}$ be the trivial morphism. If $\gamma\in\C\setminus \Z$ and $h\in\C,$ one has
\begin{equation}
\text{(1) }h=0\Rightarrow \int_{pt}\mathcal{O}_{(\Gm)_y}y^\gamma e^{hy}=0
\end{equation}
\begin{equation}
\text{(2) }h\neq 0\Rightarrow \int_{pt}\mathcal{O}_{(\Gm)_y}y^\gamma e^{hy}=\C.
\end{equation}
\end{lem}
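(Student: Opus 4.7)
The plan is to compute $\int_{pt}\mathcal{O}_{(\Gm)_y}y^\gamma e^{hy}$ directly via the two-term relative de Rham complex
\begin{equation*}
\bigl[\,\C[y,y^{-1}]\xrightarrow{\nabla}\C[y,y^{-1}]\,dy\,\bigr],
\end{equation*}
where the twisted connection acts as $\nabla f=\bigl(f'+(\gamma/y+h)f\bigr)dy$. On the monomial basis this reads $\nabla(y^n)=\bigl((n+\gamma)y^{n-1}+h y^n\bigr)dy$. By the isomorphism stated in the excerpt between direct images to a point and de Rham cohomology, $\Homo^{-1}(\int_{pt}M)=\ker\nabla$ and $\Homo^{0}(\int_{pt}M)=\mathrm{coker}\,\nabla$.

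For part (1), with $h=0$, the map simplifies to $\nabla(y^n)=(n+\gamma)y^{n-1}dy$. Since $\gamma\notin\Z$ the scalars $n+\gamma$ are all nonzero, so $\nabla$ is a degree-shifting bijection between the two sides. Both kernel and cokernel vanish, giving $\int_{pt}\mathcal{O}_{(\Gm)_y}y^\gamma=0$.

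For part (2), with $h\neq 0$, I first observe that $\ker\nabla=0$: if a nonzero Laurent polynomial $f=\sum a_n y^n$ satisfied $\nabla f=0$, then comparing the coefficient at the top degree $M=\max\{n:a_n\neq 0\}$ would give $h a_M=0$, contradicting $h\neq 0$. Next, the defining relation $\nabla(y^n)\equiv 0$ in the cokernel can be rewritten as $[y^n dy]=-\tfrac{n+\gamma}{h}[y^{n-1}dy]$, which inductively expresses every monomial class as a scalar multiple of $[dy]$, so $\dim\Homo^0\le 1$.

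To upgrade this to $\dim\Homo^0=1$, the cleanest route is the Euler--Poincar\'e formula for a rank-one connection on $\Gm$, namely $\chi(\Gm,M)=-\mathrm{irr}_0(M)-\mathrm{irr}_\infty(M)$. Our connection $d+(\gamma/y+h)dy$ is regularly singular at $0$ and has irregularity $1$ at $\infty$ because of the $e^{hy}$ factor, so $\chi=-1$; combined with $\ker\nabla=0$, this forces $\dim\Homo^0=1$. The only nontrivial point is this non-vanishing, which is the main obstacle; if one prefers to avoid irregularity bookkeeping, a direct alternative is to pair $[dy]$ against the Laplace integral $\int_{\Gamma_0}y^\gamma e^{hy}dy$ along a Hankel contour and invoke \cref{lemma:lemma} to conclude the pairing equals $2\pi\sqrt{-1}/\Gamma(-\gamma)\cdot(-h)^{-\gamma-1}\neq 0$.
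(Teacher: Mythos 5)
Your proof is correct, and for the identification of the complex, the vanishing of $\ker\nabla$, and the reduction showing every class in $\operatorname{coker}\nabla$ is a multiple of $[dy]$, it follows the same track as the paper. Where you genuinely diverge is the final nonvanishing step $\dim\Homo^0\ge 1$: you invoke either the Euler--Poincar\'e/irregularity formula $\chi(\Gm,M)=-\operatorname{irr}_0-\operatorname{irr}_\infty$, or a period pairing with the Hankel contour via \cref{lemma:lemma}, both of which are conceptually clean but lean on nontrivial machinery (Deligne's Euler characteristic formula for irregular connections in the first case, well-definedness of the period pairing with a non-compact rapid-decay cycle in the second). The paper instead stays entirely inside the Laurent polynomial ring: it shows $1\notin\operatorname{im}\nabla$ directly, by observing that the recursion $(n+\gamma)a_n+ha_{n-1}=0$ for $n\ne1$ together with $(1+\gamma)a_1+ha_0=1$ would force infinitely many nonzero coefficients in $\sum a_n y^n$, a contradiction. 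Your route buys a shorter argument once the black boxes are accepted, and the Euler--Poincar\'e formulation makes the answer predictable without computation; the paper's route buys complete self-containment at the cost of a short coefficient chase. One small caution on your pairing alternative: for general $h\in\C^\times$ the Hankel contour must be rotated so that $e^{hy}$ decays along the rays, and the exact constant picks up a branch of $h^{-\gamma-1}$, though the nonvanishing conclusion is unaffected.
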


\begin{proof}
Remember first that 
\begin{equation}
\int_{pt}\mathcal{O}_{(\Gm)_y}y^\gamma e^{hy}=\Big(\Omega^{\bullet+1}((\Gm)_y), \nabla\Big)=\Big(0\rightarrow \overset{\overset{-1}{\smile}}{\C[y^{\pm}]}\overset{\nabla}{\rightarrow}\overset{\overset{0}{\smile}}{\C[y^\pm]}\rightarrow 0\Big)
\end{equation}
where $\nabla=\frac{\partial}{\partial y}+\frac{\gamma}{y}+h.$ Take any element $\displaystyle\sum_{n\in \Z}a_ny^n\in\C[y^\pm]$. By the definition of $\nabla$, we have
\begin{equation}
\nabla\Big(\sum_{n\in\Z}a_ny^n\Big)=\sum_{n\in\Z}\lef (n+\gamma)a_n+ha_{n-1}\righ y^{n-1}.
\end{equation}
If $h=0$, it is clear that $\nabla$ is an isomorphism of $\C$ vector spaces since $\gamma\notin\Z$. Thus, (1) is true. In the following, we assume $h\neq 0$. If $\displaystyle\sum_{n\in\Z}a_ny^n\in\Ker\nabla,$ we have $a_{n-1}=-\frac{(n+\gamma)}{h}a_n$ for any $n$, hence $a_n=0$ for all $n$. Now, let us show that $1\notin\im\nabla.$ 
Suppose the converse. Then, $\nabla\Big(\displaystyle\sum_{n\in\Z}a_ny^n\Big)=1$ implies
\begin{equation}
(1+\gamma)a_1+ha_0=1,\;\; (n+\gamma)a_n+ha_{n-1}=0 \; (\forall n\neq 1).
\end{equation}
Suppose that $a_1\neq 0.$ Then, for any $n\in\Z_{\geq 2},$ 
\begin{equation}
a_n=-\frac{h}{n+\gamma}a_{n-1}=\dots=\frac{(-1)^{n-1}h^{n-1}}{(n+\gamma)\dots (2+\gamma)}a_1\neq 0.
\end{equation}
This contradicts the fact that $\displaystyle\sum_{n\in\Z}a_ny^n\in\C[y^\pm].$ Similarly, if we assume that $a_0\neq 0,$ the formula $a_{n-1}=-\frac{(n+\gamma)}{h}a_n$ implies a contradiction.

On the other hand, since
\begin{equation}
\nabla(y^n)=ny^{n-1}+\gamma y^{n-1}+hy^n,
\end{equation}
we can conclude that 
\begin{equation}
\C\ni 1\mapsto [1]\in \C[y^\pm]/\im\nabla\neq 0
\end{equation}
is surjective. Hence this is an isomorphism.
\end{proof}

\noindent
(Proof of proposition)

\noindent
Consider the following cartesian diagram:

\begin{equation}
\xymatrix{
& h^{-1}(0)\times(\Gm)_y \ar[r]^{\tilde{i}} \ar[d]_{\tilde{\pi}}&X\times(\Gm)_y \ar[d]^{\pi}\\
& h^{-1}(0)   \ar[r]^{i}                             &X.}
\end{equation}

\noindent
By base change formula and Lemma \ref{gamma} (1), we have
\begin{equation}
i^\dagger\int_\pi\mathcal{O}_{X\times (\Gm)_y}y^\gamma e^{yh}=\int_{\tilde{\pi}}\tilde{i}^\dagger\mathcal{O}_{X\times (\Gm)_y}y^\gamma e^{yh}=\int_{\tilde{\pi}}\mathcal{O}_{h^{-1}(0)\times (\Gm)_y}y^\gamma [-1]=0.
\end{equation}
By the standard distinguished triangle (\ref{sdt}), we have a canonical isomorphism
\begin{equation}
\int_\pi\mathcal{O}_{X\times (\Gm)_y}y^\gamma e^{yh}\simeq\int_jj^\dagger\int_\pi\mathcal{O}_{X\times (\Gm)_y}y^\gamma e^{yh}.
\end{equation}
We are going to compute the latter complex. Consider the following cartesian square:

\begin{equation}
\xymatrix{
& \Big(X\setminus h^{-1}(0)\Big)\times(\Gm)_y \ar[r]^{\;\;\;\;\;\;\tilde{j}} \ar[d]_{\tilde{\pi}^\prime}&X\times(\Gm)_y \ar[d]^{\pi}\\
& X\setminus h^{-1}(0)   \ar[r]^{   \;\;\;\;\;\;   j}                             &X.}
\end{equation}
Again by projection formula, we have
\begin{equation}
j^\dagger\int_\pi\mathcal{O}_{X\times (\Gm)_y}y^\gamma e^{yh}\simeq\int_{\tilde{\pi}^\prime}\tilde{j}^\dagger\mathcal{O}_{X\times (\Gm)_y}y^\gamma e^{yh}.
\end{equation}

\noindent
We consider an isomorphism $\varphi: \Big( X\setminus h^{-1}(0)\Big)\times(\Gm)_y\tilde{\rightarrow}\Big( X\setminus h^{-1}(0)\Big)\times(\Gm)_y$ defined by $\varphi(x,y)=(x,\frac{y}{h(x)}).$ Since $\tilde{\pi}^\prime=\tilde{\pi}^\prime\circ\varphi,$ one has 

\begin{equation}
\int_{\tilde{\pi}^\prime}\tilde{j}^\dagger\mathcal{O}_{X\times (\Gm)_y}y^\gamma e^{yh}\simeq\int_{\tilde{\pi}^\prime}\int_\varphi\mathcal{O}_{\Big(X\setminus h^{-1}(0)\Big)\times (\Gm)_y}y^\gamma e^{yh}\simeq\int_{\tilde{\pi}^\prime}\mathcal{O}_{X\setminus h^{-1}(0)}h^{-\gamma}\boxtimes \mathcal{O}_{(\Gm)_y}y^\gamma e^{y}\simeq \mathcal{O}_{X\setminus h^{-1}(0)}h^{-\gamma}.
\end{equation}
Thus, Proposition follows.\qed

\begin{rem}
In the proof above, we have used the following simple fact: Let $X$ be a smooth algebraic variety, and $f:X\rightarrow X$ be an isomorphism. Then, we have an identity
\begin{equation}\label{Cart}
\int_f\simeq (f^{-1})^{\dagger}=\mathbb{L}(f^{-1})^*.
\end{equation}
Indeed, base change formula applied to the following Cartesian diagram gives the identity (\ref{Cart}):
\begin{equation}
\xymatrix{
& X \ar[r]^{{\rm id}_X } \ar[d]_{f^{-1}}&X\ar[d]^{{\rm id}_X}\\
& X \ar[r]^{f}                             &X.
}
\end{equation}
\end{rem}

\begin{cor}\label{cor:gamma}
Let $X$ be a smooth algebraic variety, $h_l:X\rightarrow\A^1$ $(l=1,\cdots, k)$ be non-zero regular functions such that $h_l^{-1}(0)$ are smooth, $\pi:X\times (\Gm)_y^k\rightarrow X$ be the canonical projection, $j:X\setminus \lef h_1\dots h_k=0\righ\hookrightarrow X$ be the inclusion, and let $\gamma_l\in\C\setminus\Z$ be parameters. One has a canonical isomorphism

\begin{equation}
\int_{\pi}\mathcal{O}_{X\times(\Gm)_y^k}y_1^{\gamma_1}\dots y_k^{\gamma_k} e^{y_1h_1+\dots+y_kh_k}\simeq\int_j\mathcal{O}_{X\setminus \lef h_1\dots h_k=0\righ}h_1^{-\gamma_1}\cdots h_k^{-\gamma_k}.
\end{equation}
\end{cor}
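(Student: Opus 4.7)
The plan is to proceed by induction on $k$, taking \cref{prop:GammaInt} as the base case $k=1$. For the inductive step, factor the projection as $\pi = \pi' \circ \pi_k$, where $\pi_k \colon X \times (\Gm)_y^k \to X \times (\Gm)_y^{k-1}$ forgets the last coordinate $y_k$ and $\pi' \colon X \times (\Gm)_y^{k-1} \to X$ projects out the remaining $\Gm^{k-1}$ factor. Since direct image is functorial under composition, $\int_\pi = \int_{\pi'} \circ \int_{\pi_k}$, so it suffices to handle the two stages separately.

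For the first stage, apply \cref{prop:GammaInt} on the ambient space $X \times (\Gm)_y^{k-1}$ with the function $h_k$ pulled back from $X$; its zero locus is $h_k^{-1}(0) \times (\Gm)^{k-1}$, which is still smooth. The proposition replaces $\int_{\pi_k}$ of the full integrand by the direct image along the open immersion $\tilde j_k \colon (X \setminus \{h_k=0\}) \times (\Gm)^{k-1} \hookrightarrow X \times (\Gm)^{k-1}$ of a new integrand carrying the twist $h_k^{-\gamma_k}$ and no dependence on $y_k$. Set $Z := X \setminus \{h_k=0\}$. Because the factor $\mathcal{O}_Z h_k^{-\gamma_k}$ is pulled back from $Z$, the projection formula for the remaining projection $\tilde\pi' \colon Z \times (\Gm)^{k-1} \to Z$ pulls this twist outside, leaving an integral of exactly the shape treated by the inductive hypothesis on $Z$ with the $k-1$ restricted functions $h_1|_Z, \ldots, h_{k-1}|_Z$. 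Their zero loci remain smooth since open restriction preserves smoothness, so the inductive hypothesis applies.

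Invoking the inductive hypothesis produces $\int_{j''} \mathcal{O}_U h_1^{-\gamma_1} \cdots h_{k-1}^{-\gamma_{k-1}}$, where $U := X \setminus \{h_1 \cdots h_k = 0\}$ and $j'' \colon U \hookrightarrow Z$ is the inclusion. Tensoring this with $\mathcal{O}_Z h_k^{-\gamma_k}$ and applying the projection formula for $j''$ once more absorbs the $h_k^{-\gamma_k}$ factor into the integrand on $U$. Finally, composing with the direct image along $j'_k \colon Z \hookrightarrow X$ and using $j = j'_k \circ j''$ yields exactly the right-hand side of the desired isomorphism. The main point is purely organizational: carefully tracking the $\mathcal{D}$-module twists under pullback and tensor product, invoking base change and projection formulas, and verifying that the smoothness hypotheses on $h_l^{-1}(0)$ survive restriction to the successive open subvarieties. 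No new analytic input is needed beyond \cref{prop:GammaInt} and the elementary vanishing encapsulated in \cref{gamma}.
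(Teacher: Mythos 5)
Your proof is correct, but it takes a genuinely different route from the paper's. The paper "parallelizes" the reduction: it inflates the base to a $k$-fold product $X_{x_1}\times\dots\times X_{x_k}$, writes the integrand as an external tensor product $\boxtimes_l\,\mathcal{O}y_l^{\gamma_l}e^{y_lh_l(x_l)}$, applies \cref{prop:GammaInt} simultaneously in each factor using the product formula $\int_{f_1\times f_2}(M_1\boxtimes M_2)\simeq\int_{f_1}M_1\boxtimes\int_{f_2}M_2$, and then pulls back along the diagonal $\diag_x:X\to X^k$ via base change (once before and once after). Your argument instead "serializes": it peels off one torus direction at a time, using the factorization $\pi=\pi'\circ\pi_k$, and reduces $k$ to $k-1$ by applying the $k=1$ case to the $y_k$-direction plus the projection formula to carry the resulting $h_k^{-\gamma_k}$ twist down to $Z=X\setminus\{h_k=0\}$. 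Both need the commutative square $\pi'\circ\tilde{j}_k = j'_k\circ\tilde{\pi}'$, functoriality of $\int$ under composition, and the projection formula to shuffle pullback twists past direct images. The trade-off: the paper's version handles all $k$ functions in one stroke and is arguably slicker since base change does the bookkeeping automatically, while yours is more elementary (only $k=1$ plus standard compatibilities) but requires the inductive housekeeping of verifying at each stage that the restricted $h_l|_Z$ remain nonzero regular functions with smooth zero loci, which you do address correctly. One step in your write-up is slightly compressed: the phrase "replaces $\int_{\pi_k}$ of the full integrand" implicitly uses the projection formula for $\pi_k$ to separate the factor $\mathcal{O}_{X'}y_1^{\gamma_1}\cdots y_{k-1}^{\gamma_{k-1}}e^{y_1h_1+\cdots+y_{k-1}h_{k-1}}$ (which is pulled back along $\pi_k$) from $\mathcal{O}y_k^{\gamma_k}e^{y_kh_k}$ before \cref{prop:GammaInt} can be applied; you do acknowledge the need for such formulas generally, but this particular invocation deserves to be made explicit.
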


\begin{proof}
First, let us remark that, for any smooth algebraic varieties $X_i,Y_i\;(i=1,2)$, morphisms $f_i:X_i\rightarrow Y_i\; (i=1,2),$ and $\DD_{X_i}$-modules $M_i\;(i=1,2)$, there is a natural isomorphism of $\DD_{Y_1\times Y_2}$-modules (Proposition 1.5.30. in \cite{HTT}):
\begin{equation}\label{product}
\int_{f_1\times f_2}M_1\boxtimes M_2\simeq\int_{f_1}M_1\boxtimes\int_{f_2}M_2.
\end{equation}
Now, consider the following cartesian square:

\begin{equation}
\xymatrix@C=40pt{
& X_x\times(\Gm)_y^k \ar[r]^{\hspace{-8mm}\diag_x\times {\rm id}_y } \ar[d]_{\pi_x}&X_{x_1}\times\dots X_{x_k}\times(\Gm)_y^k \ar[d]^{\pi_{x_1,\dots ,x_k}}\\
& X_x   \ar[r]^{\hspace{-8mm}\diag_x}                             &X_{x_1}\times\dots\times X_{x_k}.
}
\end{equation}
Here, subindices denotes the coordinate of the spaces we consider. For example, $(\Gm)^k_y$ denotes an algebraic $k$ torus whose coordinate ring is $\C[y^\pm]=\C[y_1^\pm,\dots,y_k^\pm]$ etc. 
If we denote by $j_l:X_{x_l}\setminus h_l^{-1}(0)\hookrightarrow X_{x_l}$ the open embedding, base change formula in view of (\ref{product}) gives a sequence of isomorphisms
\begin{align}
  &\int_{\pi}\mathcal{O}_{X\times(\Gm)_y^k}y_1^{\gamma_1}\dots y_k^{\gamma_k} e^{y_1h_1(x)+\dots+y_kh_k(x)}\\
 \simeq\hspace{2.4em}&\int_{\pi}\mathbb{L}(\diag_x\times {\rm id}_y)^*\Big[\mathcal{O}_{X_{x_1}\times(\Gm)_{y_1}}y_1^{\gamma_1}e^{y_1h_1(x_1)}\boxtimes\dots\boxtimes\mathcal{O}_{X_{x_k}} y_k^{\gamma_k} e^{y_kh_k(x_k)}\Big]\\
 \simeq\hspace{2.4em}&\mathbb{L}\diag_x^*\int_{\pi_{x_1,\dots,x_k}}\Big[\mathcal{O}_{X_{x_1}\times(\Gm)_{y_1}}y_1^{\gamma_1}e^{y_1h_1(x_1)}\boxtimes\dots\boxtimes\mathcal{O}_{X_{x_k}\times(\Gm)_{y_k}} y_k^{\gamma_k} e^{y_kh_k(x_k)}\Big]\\
 \overset{\cref{prop:GammaInt}}{\simeq}&\mathbb{L}\diag_x^*\Big[ \int_{j_1}\mathcal{O}_{X_{x_1}\setminus h_1^{-1}(0)}h_1(x_1)^{-\gamma_1}\boxtimes\dots\boxtimes\int_{j_k}\mathcal{O}_{X_{x_k}\setminus h_k^{-1}(0)}h_k(x_k)^{-\gamma_k}\Big]\\
 \simeq\hspace{2.4em}&\mathbb{L}\diag_x^*\int_{j_1\times\dots\times j_k}\Big[ \mathcal{O}_{X_{x_1}\setminus h_1^{-1}(0)}h_1(x_1)^{-\gamma_1}\boxtimes\dots\boxtimes\mathcal{O}_{X_{x_k}\setminus h_k^{-1}(0)}h_k(x_k)^{-\gamma_k}\Big].
\end{align}

\noindent
Finally, base change formula applied to the cartesian square

\begin{equation}
\xymatrix{
& X_x\setminus \lef h_1\cdots h_k=0\righ \ar[r]^{\hspace{-15mm}\widetilde{\diag_x} } \ar[d]_{j}&\Big(X_{x_1}\setminus h_1^{-1}(0)\Big)\times\dots\times \Big(X_{x_k}\setminus h_k^{-1}(0)\Big) \ar[d]^{j_1\times\cdots\times j_k}\\
& X_x   \ar[r]^{\hspace{-15mm}\diag_x}                             &X_{x_1}\times\dots\times X_{x_k}
}
\end{equation}
\noindent
gives isomorphisms

\begin{align}
  &\mathbb{L}\diag_x^*\int_{j_1\times\dots\times j_k}\Big[ \mathcal{O}_{X_{x_1}\setminus h_1^{-1}(0)}h_1(x_1)^{-\gamma_1}\boxtimes\dots\boxtimes\mathcal{O}_{X_{x_k}\setminus h_k^{-1}(0)}h_k(x_k)^{-\gamma_k}\Big]\\
 \simeq&\int_j\mathbb{L}\widetilde{\diag_x}^*\Big[ \mathcal{O}_{X_{x_1}\setminus h_1^{-1}(0)}h_1(x_1)^{-\gamma_1}\boxtimes\dots\boxtimes\mathcal{O}_{X_{x_k}\setminus h_k^{-1}(0)}h_k(x_k)^{-\gamma_k}\Big]\\
 \simeq&\int_j\mathcal{O}_{X\setminus \lef h_1\dots h_k=0\righ}h_1(x)^{-\gamma_1}\cdots h_k(x)^{-\gamma_k}.
\end{align}

\end{proof}

The following theorem proves the equivalence of (\ref{MixedInt}) and (\ref{MixedLaplaceInt}).
\begin{thm}[Cayley trick for mixed integrals]\label{thm:CayleyTrick}
Let $h_{l,z^{(l)}}(x)=\displaystyle\sum_{j=1}^{N_l}z_j^{(l)}x^{{\bf a}^{(l)}(j)}$ $(l=0,1,\dots,k)$ be Laurent polynomials. We put $N=N_0+\cdots+N_k$, $z=(z^{(0)},\dots,z^{(k)})$, and $A_l=({\bf a}^{(l)}(1)\mid\cdots\mid {\bf a}^{(l)}(N_l))$. Let $\pi:\A^N_z\times (\Gm)_x^n\setminus\lef h_{1,z^{(1)}}\cdots h_{k,z^{(k)}}=0\righ\rightarrow\A^N_z$ and $\varpi: \A^N_z\times(\Gm)_y^k\times(\Gm)_x^n\rightarrow\A^N_z$ be projections and $\gamma_l\in\C\setminus\Z$ be parameters. Then, one has an isomorphism

\begin{equation}\label{Desired}
\int_\pi\mathcal{O}_{\A^N_z\times (\Gm)_x^n\setminus\lef h_{1,z^{(1)}}\cdots h_{k,z^{(k)}}=0\righ }h_1^{-\gamma_1}\cdots h_k^{-\gamma_k}x^c e^{h_{0,z^{(0)}}(x)}\simeq\int_\varpi\mathcal{O}_{\A^N_z\times(\Gm)_y^k\times(\Gm)_x^n}y^\gamma x^c e^{h_z(y,x)},
\end{equation}
where $h_z(y,x)=h_{0,z^{(0)}}(x)+\displaystyle\sum_{l=1}^ky_lh_{l,z^{(l)}}(x).$
\end{thm}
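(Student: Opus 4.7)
The plan is to reduce the statement to \cref{cor:gamma} (the $k$-fold version of \cref{prop:GammaInt}) by decomposing the projections into two stages and applying the $\DD$-module projection formula to peel off the ``inert'' factor $x^c e^{h_{0,z^{(0)}}(x)}$, which only involves variables on the base $X := \A^N_z \times (\Gm)_x^n$ and not the auxiliary variable $y$.

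First I would factor
\[
\pi = p \circ j, \qquad \varpi = p \circ \pi',
\]
where $p : X \to \A^N_z$ is the natural projection, $j : X \setminus \{h_{1,z^{(1)}}\cdots h_{k,z^{(k)}}=0\} \hookrightarrow X$ is the open inclusion, and $\pi' : X \times (\Gm)_y^k \to X$ is the projection. By functoriality of direct image, $\int_\pi = \int_p \circ \int_j$ and $\int_\varpi = \int_p \circ \int_{\pi'}$, so it suffices to prove the corresponding isomorphism on $X$, namely
\[
\int_{\pi'}\mathcal{O}_{X\times(\Gm)_y^k} y^\gamma x^c e^{h_z(y,x)} \;\simeq\; \int_j \mathcal{O}_{X\setminus\{h_1\cdots h_k=0\}} h_1^{-\gamma_1}\cdots h_k^{-\gamma_k} x^c e^{h_{0,z^{(0)}}(x)}.
\]

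Next, I would apply the projection formula to factor out the twist $M := \mathcal{O}_X\, x^c e^{h_{0,z^{(0)}}(x)}$, which is a $\DD_X$-module whose underlying $\mathcal{O}_X$-module is invertible and hence flat. Since $h_{0,z^{(0)}}(x)$, $x^c$ involve only $(z,x)$ and not $y$, one has
\[
\pi'^{\dagger} M \,\overset{\mathbb{D}}{\otimes}\, \mathcal{O}_{X\times(\Gm)_y^k}\, y^\gamma e^{\sum_l y_l h_{l,z^{(l)}}} \;\simeq\; \mathcal{O}_{X\times(\Gm)_y^k}\, y^\gamma x^c e^{h_z(y,x)},
\]
and likewise $j^\dagger M\,\overset{\mathbb{D}}{\otimes}\, \mathcal{O}_{X\setminus\{\cdots\}} h_1^{-\gamma_1}\cdots h_k^{-\gamma_k}$ recovers the integrand on the right-hand side. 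Projection formula for the smooth projection $\pi'$ and for the open inclusion $j$ then allows me to pull $M$ out of each direct image, reducing both sides to
\[
M \,\overset{\mathbb{D}}{\otimes}\, \int_{\pi'}\mathcal{O}_{X\times(\Gm)_y^k}\, y^\gamma e^{\sum_l y_l h_{l,z^{(l)}}} \quad \text{and}\quad M \,\overset{\mathbb{D}}{\otimes}\, \int_j\mathcal{O}_{X\setminus\{\cdots\}} h_1^{-\gamma_1}\cdots h_k^{-\gamma_k},
\]
respectively, which are identified by \cref{cor:gamma}.

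To apply \cref{cor:gamma} I must verify its hypotheses for the functions $h_{l,z^{(l)}}(x) = \sum_{j=1}^{N_l} z_j^{(l)} x^{{\bf a}^{(l)}(j)}$ viewed as regular functions on $X$. Smoothness of $h_{l,z^{(l)}}^{-1}(0)\subset X$ is immediate: the partial derivative $\partial h_{l,z^{(l)}}/\partial z_j^{(l)} = x^{{\bf a}^{(l)}(j)}$ is nowhere vanishing on $(\Gm)_x^n$, so the differential $dh_{l,z^{(l)}}$ has no zeros on $X$. The assumption $\gamma_l \in \C\setminus\Z$ is imposed in the statement of the theorem and matches that of \cref{cor:gamma}. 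The main technical point to watch is the compatibility of projection formula with the twist $M$ — concretely, one wants the isomorphism of $\DD_X$-modules $\pi'^\dagger M \otimes^{\mathbb D}(-) \cong \pi'^*M \otimes_{\mathcal O}(-)$ at the level of integrands, which is unproblematic because $M$ is locally free of rank one over $\mathcal{O}_X$; all objects involved are holonomic, so the formalism of \S 1.5 of \cite{HTT} applies without modification.
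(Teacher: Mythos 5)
Your proof is correct and follows essentially the same route as the paper: both decompositions factor $\pi$ and $\varpi$ through the common intermediate base $\A^N_z\times(\Gm)_x^n$ (your $p,\pi'$ are the paper's $\tilde\pi, p$), and both peel off the rank-one twist $\mathcal{O}_X x^c e^{h_{0,z^{(0)}}}$ by projection formula before invoking \cref{cor:gamma}. Your explicit verification that $dh_{l,z^{(l)}}$ is nowhere vanishing via the $z^{(l)}$-partials — hence $h_{l,z^{(l)}}^{-1}(0)$ is smooth — is a small but genuine addition the paper leaves implicit.
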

\begin{proof}
Note first that hypersurfaces $\{h_{l,z^{(l)}}=0\}\subset\A^N_z\times(\Gm)^n_x$ $(l=1,\dots,k)$ are smooth. Now, consider the following commutative diagram:

\begin{equation}
\xymatrix{
& \A^N_z\times(\Gm)_x^n\setminus \lef h_{1,z^{(1)}}\cdots h_{k,z^{(k)}}=0\righ \ar[d]_{\pi } \ar[dr]_{j}& \\
& \A^N_z  & \A^N_z\times(\Gm)_x^n \ar[l]_{\tilde{\pi}}\\
& \A^N_z\times(\Gm)_y^k\times(\Gm)_x^n   \ar[ur]^{p}   \ar[u]_{\varpi}    & .
}
\end{equation}
\noindent
By projection formula,
\begin{align}
 &\int_j\mathcal{O}_{\A^N_z\times (\Gm)_x^n\setminus\lef h_{1,z^{(1)}}\cdots h_{k,z^{(k)}}=0\righ }h_1^{-\gamma_1}\cdots h_k^{-\gamma_k}x^c e^{h_{0,z^{(0)}}(x)}\nonumber\\
 \simeq&\int_j\Big( \mathcal{O}_{\A^N_z\times (\Gm)_x^n\setminus\lef h_{1,z^{(1)}}\cdots h_{k,z^{(k)}}=0\righ }h_1^{-\gamma_1}\cdots h_k^{-\gamma_k}\Big)\overset{\mathbb{D}}{\otimes}\mathcal{O}_{\A^N_z\times(\Gm)^n_x}x^ce^{h_{0,z^{(0)}}(x)}.\label{CIsom1}
\end{align}
\noindent
By \cref{cor:gamma}, we have
\begin{equation}
\int_j\Big( \mathcal{O}_{\A^N_z\times (\Gm)_x^n\setminus\lef h_{1,z^{(1)}}\cdots h_{k,z^{(k)}}=0\righ }h_1^{-\gamma_1}\cdots h_k^{-\gamma_k}\Big)\simeq\int_p\mathcal{O}_{\A^N_z\times(\Gm)_y^k\times(\Gm)_x^n}y^\gamma e^{y_1h_{1,z^{(1)}}+\cdots+y_kh_{k,z^{(k)}}}.
\end{equation}
Again by projection formula, we have
\begin{equation}\Big(\int_p\mathcal{O}_{\A^N_z\times(\Gm)_y^k\times(\Gm)_x^n}y^\gamma e^{y_1h_{1,z^{(1)}}+\cdots+y_kh_{k,z^{(k)}}}\Big)\overset{\mathbb{D}}{\otimes}\mathcal{O}_{\A^N_z\times(\Gm)^n_x}x^ce^{h_{0,z^{(0)}}(x)}\simeq\int_p\mathcal{O}_{\A^N_z\times(\Gm)_y^k\times(\Gm)_x^n}y^\gamma x^c e^{h_z(y,x)}\label{CIsom2}
\end{equation}

\noindent
Since one has canonical isomorphisms 
\begin{equation}
\int_\pi\simeq\int_{\tilde{\pi}}\circ\int_j\;\;\;\;\;\;\;\;\int_\varpi\simeq\int_{\tilde{\pi}}\circ\int_p\;\;\;,
\end{equation}
applying the functor $\int_{\varpi}$ to the left hand side of (\ref{CIsom1}) and to the right hand side of (\ref{CIsom2}) yields to the desired formula (\ref{Desired}).
\end{proof}

Next, we establish an equivalence between (\ref{MixedInt}) and (\ref{MixedResidueInt}). The fundamental idea behind the construction of the canonical isomorphism is Leray's theory of residues \cite{L}. Let us briefly explain the most important formula in his theory. Suppose we are given a complex manifold $X$ and a smooth hypersurface $Y$ in $X$, For any $p$ cycle $[\gamma]\in\Homo_p(Y,\C)$ in $Y$, one can define the coboundary $\delta \gamma$ of $\gamma$ as a $(p+1)$ cycle $[\delta\gamma]\in\Homo_{p+1}(X\setminus Y,\C)$ in $X\setminus Y$. J. Leray's original construction is purely geometric. Namely, we first take a tubular neighbourhood $N$ and a projection $\pi:N\rightarrow Y$. Since $\pi$ is a $\Delta^1$ bundle, we can naturally consider a $\partial\Delta$ bundle $\pi^\prime:\partial N\rightarrow Y$. Then the coboundary cycle $\delta\gamma$ is obtained as $\delta\gamma=(\pi^{\prime})^{-1}(\gamma)$ equipped with a suitable orientation. On the other hand, for any $p+1$ cocycle $[\omega]\in\Homo^{p+1}(X\setminus Y,\C)$ in $X\setminus Y$, one can naturally define the residue ${\rm res}(\omega)$ as a $p$ cocycle $[{\rm res}(\omega)]\in\Homo^{p}(X\setminus Y,\C)$ in $Y$.  Leray's residue theorem states that coboundary operation $\delta$ and residue operation ${\rm res}$ are dual to each other, i.e.,
\begin{equation}
\frac{1}{2\pi\ii}\int_{\delta\gamma}\omega=\int_\gamma{\rm res}(\omega).
\end{equation}
Even more generally, one can define the composed coboundary and composed residue operations. Suppose we are given $k$ smooth hyperpersurfaces $Y_1,\dots,Y_k$. Suppose that these hypersurfaces are normal crossings. Then, we can consider a sequence of embeddings of smooth hypersurfaces 
\begin{align*}
Y_1\cap\cdots \cap Y_k&\rightarrow Y_2\cap\cdots \cap Y_k\setminus Y_1\\
                                 &\rightarrow Y_3\cap\cdots \cap Y_k\setminus (Y_1\cup Y_2)\\
                                 &\rightarrow\cdots\\
                                 &\rightarrow X\setminus (Y_1\cup\cdots\cup Y_k).
\end{align*}
If we denote $\delta^{(1)},\dots,\delta^{(k)}$ (resp. ${\rm res}^{(1)},\dots,{\rm res}^{(k)}$) the corresponding coboundary operations (resp. residue operations), Leray's composed residue theorem states that for any $p$ cycle $[\gamma]\in\Homo_p(Y_1\cap\dots\cap Y_k,\C)$ in $Y_1\cap\dots\cap Y_k$ and for any $p+k$ cocycle $[\omega]\in\Homo^{p+k}(X\setminus (Y_1\cup\dots\cup Y_k),\C)$ in $X\setminus (Y_1\cup\cdots\cup Y_k)$, we have an identity
\begin{equation}
\frac{1}{(2\pi\ii)^k}\int_{\delta^{(k)}\circ\dots\circ\delta^{(1)}\gamma}\omega=\int_\gamma{\rm res}^{(1)}\circ\dots\circ{\rm res}^{(k)}(\omega).
\end{equation}
 
Based on this theory, let us turn back to the equivalence between (\ref{MixedInt}) and (\ref{MixedResidueInt}).
Let us consider a wider space
\begin{equation}
Y=\mathbb{A}^N_z\times(\Gm)^k_y\times(\Gm)_x^n
\end{equation}
and divisors 
\begin{equation}
S_l=\{ 1=y_lh_{l,z^{(l)}}(x)\}\subset Y.
\end{equation}
Here, $\A^N=\A^{N_0}_{z^{(0)}}\times\dots\times\A^{N_k}_{z^{(k)}}$ as in \cref{thm:CayleyTrick}. We consider the following sequence of Leray's coboundary operation:
\begin{align*}
S_1\cap\cdots \cap S_k&\rightarrow S_2\cap\cdots \cap S_k\setminus S_1\\
                                 &\rightarrow S_3\cap\cdots \cap S_k\setminus (S_1\cup S_2)\\
                                 &\rightarrow \cdots\\
                                 &\rightarrow Y\setminus (S_1\cup\cdots\cup S_k).
\end{align*}

\noindent
Put $X=\A^N_z\times(\Gm)^n_x\setminus\lef   h_{1,z^{(1)}}\dots h_{k,z^{(k)}}=0\righ,$ and 

\noindent
$\tilde{Y}=\A^N_z\times(\Gm)^k_y\times(\Gm)_x^n\setminus\lef   (1-y_1h_{1,z^{(1)}})\dots(1- y_kh_{k,z^{(k)}})=0\righ.$ Let $\pi_z:X\rightarrow \A^N_z$ and $\tilde{\varpi}:\tilde{Y}\rightarrow\A^N_z$ be the natural projections. The equivalence is formulated as follows:

\begin{thm}[Composed residue isomorphism]\label{thm:ComposedResidue}
Assume $\gamma_l\notin\mathbb{Z}.$ Then, there is a canonical isomorphism of $\DD_{\A^N}$-modules
\begin{equation}
\int_{\pi_z}\mathcal{O}_Xh_{1,z^{(1)}}^{-\gamma_1}\dots h_{k,z^{(k)}}^{-\gamma_k}x^ce^{h_{0,z^{(0)}}(x)}\simeq\int_{\tilde{\varpi}_z}\mathcal{O}_{\tilde{Y}}y^\gamma x^ce^{h_{0,z^{(0)}}(x)}.
\end{equation}
\end{thm}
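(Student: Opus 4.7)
The proof directly parallels that of \cref{thm:CayleyTrick}, with the Laplace integration in the auxiliary $y$-variables replaced by a composed residue operation along the divisors $\{1-y_lh_l=0\}$. The central ingredient is a residue counterpart of \cref{prop:GammaInt}. Namely, let $h:X\to\A^1$ be a non-zero regular function on a smooth variety with smooth zero locus, $\pi:X\times(\Gm)_y\to X$ the projection, and $\tilde{j}:X\times(\Gm)_y\setminus\{1-yh=0\}\hookrightarrow X\times(\Gm)_y$, $j:X\setminus h^{-1}(0)\hookrightarrow X$ the open embeddings. The plan is first to establish that, for $\gamma\notin\Z$, there is a canonical isomorphism
\begin{equation*}
\int_\pi\int_{\tilde{j}}\mathcal{O}_{X\times(\Gm)_y\setminus\{1-yh=0\}}y^\gamma\simeq\int_j\mathcal{O}_{X\setminus h^{-1}(0)}h^{-\gamma}.
\end{equation*}
The proof of this lemma imitates \cref{prop:GammaInt}: applying $i^\dagger$ for $i:h^{-1}(0)\hookrightarrow X$ and invoking base change reduces the problem to computing $\int_{pt}\mathcal{O}_{\Gm}y^\gamma$, since $\{1-yh=0\}$ is empty above $h^{-1}(0)$. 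This pushforward vanishes for $\gamma\notin\Z$ because $\partial_y+\gamma/y$ acts invertibly on $\C[y^{\pm}]$, a computation identical to \cref{gamma}(1). The standard distinguished triangle (\ref{sdt}) then identifies the source with its restriction to $X\setminus h^{-1}(0)$; on that stratum, the rescaling automorphism $\varphi:(x,y)\mapsto(x,y h(x))$ carries $\{1-y=0\}$ to $\{1-yh=0\}$, and invoking (\ref{Cart}) factors the $D$-module as an external product $\mathcal{O}_{X\setminus h^{-1}(0)}h^{-\gamma}\boxtimes\mathcal{O}_{\Gm\setminus\{1-y=0\}}y^\gamma$. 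The second factor pushes forward to a one-dimensional $\C$-vector space, as verified by the twisted de Rham cohomology of a rank-one local system on $\mathbb{P}^1\setminus\{0,1,\infty\}$ with nontrivial monodromies at $0$ and $\infty$.

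The extension to $k$ functions $h_1,\dots,h_k$ is then formal: applying the diagonal embedding and external tensor-product argument of \cref{cor:gamma} verbatim, one obtains
\begin{equation*}
\int_\pi\int_{\tilde{j}}\mathcal{O}_{X\times(\Gm)^k_y\setminus\bigcup_l\{1-y_lh_l=0\}}y^\gamma\simeq\int_j\mathcal{O}_{X\setminus\{h_1\cdots h_k=0\}}h_1^{-\gamma_1}\cdots h_k^{-\gamma_k}.
\end{equation*}

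The theorem now follows by applying this multi-variable lemma with $X=\A^N_z\times(\Gm)^n_x$ and $h_l=h_{l,z^{(l)}}(x)$, mirroring the conclusion of \cref{thm:CayleyTrick}. Decompose $\tilde{\varpi}=\tilde{\pi}_x\circ p\circ\tilde{j}$ where $p:\A^N_z\times(\Gm)^k_y\times(\Gm)^n_x\to\A^N_z\times(\Gm)^n_x$ forgets $y$ and $\tilde{\pi}_x:\A^N_z\times(\Gm)^n_x\to\A^N_z$ denotes the remaining projection, and let $j_X:X\hookrightarrow\A^N_z\times(\Gm)^n_x$ be the open embedding, so that $\pi_z=\tilde{\pi}_x\circ j_X$. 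Since $\mathcal{O}x^c e^{h_{0,z^{(0)}}}$ is a $\DD$-module independent of $y$, the projection formula gives
\begin{equation*}
\int_p\int_{\tilde{j}}\mathcal{O}_{\tilde{Y}}y^\gamma x^c e^{h_{0,z^{(0)}}}\simeq\Big(\int_p\int_{\tilde{j}}\mathcal{O}_{\tilde{Y}}y^\gamma\Big)\overset{\mathbb{D}}{\otimes}\mathcal{O}x^c e^{h_{0,z^{(0)}}}\simeq\int_{j_X}\mathcal{O}_X h_1^{-\gamma_1}\cdots h_k^{-\gamma_k}x^c e^{h_{0,z^{(0)}}},
\end{equation*}
and a subsequent application of $\int_{\tilde{\pi}_x}$ delivers the asserted isomorphism. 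The principal obstacle is the one-variable residue lemma: unlike in the Laplace setting, the divisor $\{1-yh=0\}$ does not degenerate to a well-behaved subscheme as $h\to 0$ (it escapes to infinity in $y$), so the rescaling automorphism is only defined over the open stratum $X\setminus h^{-1}(0)$, forcing one to first exploit the stratification via $i^\dagger$ before the change of variables can be applied; once this local model is set up, the multi-variable promotion and the incorporation of $e^{h_0}$ are formal manipulations with base change and projection formula.
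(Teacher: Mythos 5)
Your argument is correct, but it takes a genuinely different route from the paper's. The paper proves the theorem in one pass by stratifying the \emph{ambient} space $Y=\A^N\times(\Gm)^k_y\times(\Gm)^n_x$ along the divisors $S_l=\{1=y_lh_l\}$, setting $L_m=\iota_m^\dagger L_k$ on the partial complements $Y_m=\bigcap_{l>m}S_l\setminus\bigcup_{l<m}S_l$, and for each $m$ applying $\int_{\tilde\pi_m}$ to the triangle $\int_{i_m}i_m^\dagger L_m\to L_m\to\int_{j_m}j_m^\dagger L_m\overset{+1}{\to}$; the middle term vanishes because $Y_m$ splits off a free $(\Gm)_{y_m}$-factor carrying $\mathcal O y_m^{\gamma_m}$, so the connecting map supplies the isomorphism $\int_{\pi_m}j_m^\dagger L_m\simeq\int_{\pi_{m-1}}i_m^\dagger L_m[1]$, and iterating from $m=k$ down to $m=1$ realizes Leray's composed coboundary directly as a chain of boundary morphisms. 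You instead mirror the architecture used for \cref{thm:CayleyTrick}: a standalone one-variable lemma (an exact residue analogue of \cref{prop:GammaInt}), its multi-variable promotion via the diagonal/external-product mechanism of \cref{cor:gamma}, and a final application of the projection formula. Your one-variable lemma stratifies $X$ by $h^{-1}(0)$ rather than $Y$ by $S_1$, and after the rescaling it reduces to computing $\int_{pt}\mathcal O_{\Gm\setminus\{1\}}w^\gamma\simeq\C$ --- a twisted de Rham cohomology of $\mathbb P^1\setminus\{0,1,\infty\}$ that the paper's proof never needs. The trade-off: the paper's argument avoids any one-dimensional cohomology identification (its isomorphism comes purely from the connecting map, so it is canonical from the start), while yours makes the residue case structurally identical to the Cayley-trick case, at the price of one extra cohomology computation and an implicit choice of basis for that line. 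Both are correct proofs. One small slip: your rescaling $\varphi(x,y)=(x,yh(x))$ carries $\{1-yh=0\}$ onto $\{1-y=0\}$, not the other way around as you wrote; equivalently you want $\varphi(x,y)=(x,y/h(x))$ if you intend the stated direction. This does not affect the argument since what is actually invoked is $\int_\varphi=\mathbb L(\varphi^{-1})^*$ via (\ref{Cart}), and either convention yields the factorisation $\mathcal O_{X\setminus h^{-1}(0)}h^{-\gamma}\boxtimes\mathcal O_{\Gm\setminus\{1\}}w^\gamma$.
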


\begin{proof}
Consider an integrable connection
\begin{equation}
L_k=\mathcal{O}_Yy^{\gamma}x^{c}e^{h_{0,z^{(0)}}(x)}
\end{equation}
on $Y$.
We put 
\begin{equation}
X_m=\displaystyle \bigcap_{l=m+1}^kS_l\setminus\bigcup_{l=1}^{m}S_l,\; Y_m=\displaystyle\bigcap_{l=m+1}^kS_l\setminus\bigcup_{l=1}^{m-1}S_l.
\end{equation}
Define $L_m$ by
\begin{equation}
L_{m}=\iota^\dagger_m L_k,
\end{equation}
where $\displaystyle \iota_m:Y_m\hookrightarrow Y$
is an inclusion.
If we denote by $j_m:X_m\hookrightarrow Y_m$ the open immersion and by $i_m:X_{m-1}\hookrightarrow Y_m$ the closed immersion, we have a triangle

\begin{equation}\label{sdt2}
\int_{i_m}i_m^{\dagger}L_m\rightarrow L_m\rightarrow\int_{j_m}j_m^\dagger L_m\overset{+1}{\rightarrow}.
\end{equation}

\noindent
and a commutative diagram
\begin{equation}
\xymatrix{
& X_{m-1} \ar[r]^{i_m} \ar[dr]_{\pi_{m-1}}&Y_m \ar[d]^{\tilde{\pi}_m}& X_m\ar[dl]^{\pi_m} \ar[l]_{j_m}\\
&                                 &\A^N_z& .}
\end{equation}

\noindent
Here, $\pi_m$ and $\tilde{\pi}_m$ are restrictions of the canonical projection $\varpi:Y\rightarrow\mathbb{A}^N_z.$ Applying $\int_{\tilde{\pi}_{m}}$ to $(\ref{sdt2})$ yields to a distinguished triangle

\begin{equation}
\int_{\pi_{m-1}}i_m^\dagger L_m\rightarrow \int_{\tilde{\pi}_{m}}L_m\rightarrow\int_{\pi_m}j^\dagger_mL_{m}\overset{+1}{\rightarrow}.
\end{equation}

\noindent
Remember that 
\begin{equation}
\displaystyle Y_m=(\Gm)_{y_m}\times\left[ \left( (\Gm)^{m-1}_{y_1,\cdots,y_{m-1}}\times (\mathbb{A}^N_z\times(\Gm)_x^n\setminus\{ h_{m+1,z^{(m+1)}}\cdots h_{k,z^{(k)}}=0\})\right)\setminus\bigcup_{l=1}^{m-1}\{ 1=y_lh_{l,z^{(l)}}\}\right]
\end{equation}
and 
\begin{equation}
L_m=\mathcal{O}y_m^{\gamma_m}\boxtimes\mathcal{O}y_1^{\gamma_1}\cdots y_{m-1}^{\gamma_{m-1}}h_{m+1,z^{(m+1)}}^{-\gamma_{m+1}}\cdots h_{k,z^{(k)}}^{-\gamma_k}x^c e^{h_{0,z^{(0)}}(x)}.
\end{equation}
Therefore, $\tilde{\pi}_m$ decomposes as 
\begin{equation}
Y_m\overset{p_m}{\rightarrow}\left( (\Gm)^{m-1}_{y_1,\cdots,y_{m-1}}\times (\mathbb{A}^N_z\times(\Gm)_x^n\setminus\{ h_{m+1,z^{(m+1)}}\cdots h_{k,z^{(k)}}=0\})\right)\setminus\bigcup_{l=1}^{m-1}\{ 1=y_lh_{l,z^{(l)}}\}\overset{p_m^\prime}{\rightarrow}\A^N_z
\end{equation}

\noindent
If we denote by $pt:(\Gm)_{y_m}\rightarrow\{ *\}$ the morphism to one point, we have
\begin{equation}
\int_{p_m}L_m\simeq \left(\int_{pt}\mathcal{O}y_m^{\gamma_m}\right)\boxtimes\mathcal{O}y_1^{\gamma_1}\cdots y_{m-1}^{\gamma_{m-1}}h_{m+1,z^{(m+1)}}^{-\gamma_{m+1}}\cdots h_{k,z^{(k)}}^{-\gamma_k}x^c e^{h_{0,z^{(0)}}(x)}.
\end{equation}
\noindent
On the other hand, if $\gamma_m\notin\mathbb{Z},$ we have by Lemma \ref{gamma},
\begin{equation}
\Homo^*\left(\int_{pt}\mathcal{O}y_m^{\gamma_m}\right)=\mathbb{H}_{dR}^{1+*}\Big((\Gm)_{y_m},\mathcal{O}y_m^{\gamma_m}\Big)=0.
\end{equation}
In summary, if $\gamma_m\notin\mathbb{Z},$ we have a canonical isomorophism
\begin{equation}
\int_{\tilde{\pi}_m}L_m\simeq \int_{p_m^\prime}\int_{p_m}L_m\simeq 0.
\end{equation}

\noindent
This implies that 

\begin{equation}
\int_{\pi_m}j^\dagger_mL_{m}=\int_{\pi_{m-1}}i_m^\dagger L_m[1]
\end{equation}
In view of the relation
$
i_m^{\dagger}L_m=j^{\dagger}_{m-1}L_{m-1},
$
 we inductively have a formula
\begin{equation}
\int_{\pi_k}j_k^\dagger=\int_{\pi_0}i_1^\dagger L_1.
\end{equation}

\noindent
Now we have
\begin{equation}
X_0=\displaystyle\bigcap_{l=1}^k S_l\simeq \mathbb{A}^N_z\times(\Gm)^n_x\setminus\{ h_{1,z^{(1)}}\cdots h_{k,z^{(k)}}=0\}
\end{equation}
and 
\begin{equation}
X_k=Y\setminus (S_1\cup\cdots\cup S_k).
\end{equation}
Thus, we obtained the desired formula.
\end{proof}

Finally, let us refer to the result of Schulze and Walther (\cite{SW}) which relates $M_A(c)$ for non-resonant parameters to Laplace-Gauss-Manin connection. It is stated in the following form.

\begin{thm}[\cite{SW}]
Let $\phi:(\Gm)^n_x\rightarrow\A^N$ be a morphism defined by $\phi(x)=(x^{{\bf a}(1)},\dots,x^{{\bf a}(N)})$. If $c$ is non-resonant, one has a canonical isomorphism 
\begin{equation}\label{SWisom}
M_A(c)\simeq \FL\circ\int_\phi\mathcal{O}_{(\Gm)^n}x^c,
\end{equation}
where $\FL$ stands for Fourier-Laplace transform.
\end{thm}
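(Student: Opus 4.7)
The plan is to verify the claimed isomorphism by passing to the inverse Fourier-Laplace transform $\FL^{-1}$ on both sides and matching presentations of cyclic $\DD_{\A^N}$-modules. At the level of generators, $\FL^{-1}$ obeys the substitution $z_j\leftrightarrow -\partial_{w_j}$, $\partial_{z_j}\leftrightarrow w_j$ (up to the chosen sign convention); under this substitution the box operators $\Box_u$ in $M_A(c)$ become the binomial relations $w^{u_+}-w^{u_-}$ for $u\in L_A$, while the Euler operators $E_i$ turn, after a constant shift, into operators of the form $-\sum_j a_{ij}w_j\partial_{w_j}+\text{const}$ that encode the $(\Gm)^n$-weight of the $c$-twist. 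The binomial relations cut out the Zariski closure of $\phi((\Gm)^n)$ inside $\A^N$, so $\FL^{-1}M_A(c)$ is naturally supported on this toric closure, with the torus action twisted by $c$. The goal therefore reduces to identifying $\FL^{-1}M_A(c)$ with $\int_\phi\mathcal{O}_{(\Gm)^n}x^c$.

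To set up the comparison I would factor $\phi$ as $\phi=j\circ i$, with $i:(\Gm)^n\hookrightarrow(\Gm)^N$ the closed immersion induced by the monomial map (closed because the standing assumption $\Z A=\Z^{n\times 1}$ forces $i$ to realize $(\Gm)^n$ as the subtorus cut out by the characters $w^u=1$ for $u\in L_A$) and $j:(\Gm)^N\hookrightarrow\A^N$ the open inclusion. The direct image $\int_i\mathcal{O}_{(\Gm)^n}x^c$ is then computed as a cyclic $\DD_{(\Gm)^N}$-module: the binomial equations annihilate the generator by Kashiwara's description of the direct image along a closed immersion, and the Euler-type operators annihilate it by equivariance of the $x^c$ twist under the $(\Gm)^n$-action. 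It then remains to apply $\int_j$, which amounts to extending this cyclic module across $\A^N\setminus(\Gm)^N$. The key point here is that, under non-resonance of $c$, the $b$-functions governing the extension have no problematic integer roots, so the natural presentation survives unchanged and one recovers exactly the cyclic $\DD_{\A^N}$-module matching $\FL^{-1}M_A(c)$.

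The hard part is precisely this extension step across the toric boundary and its interaction with $\FL$. Quasi-coherent extensions and $!$-extensions can differ by $\Ext$ terms supported on the boundary, and without a non-resonance hypothesis the presentation of $\int_j$ may acquire extra generators or relations that block a clean identification. Closing the argument rigorously means showing that the natural surjection from the cyclic module defined by the binomial and Euler operators on $\A^N$ onto $\int_\phi\mathcal{O}_{(\Gm)^n}x^c$ is also injective, which I would do by invoking Adolphson's computation of the characteristic variety of $M_A(c)$ together with the GKZ holonomic rank formula to force both sides to have the same length. These are essentially the ingredients marshalled in \cite{SW}.
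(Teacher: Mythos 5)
The theorem is stated in the paper as a cited result of Schulze and Walther; the paper itself gives no proof of \eqref{SWisom} (what follows in the paper is only a proposition that re-expresses the right-hand side $\FL\circ\int_\phi$ as a direct image with an exponential twist, which is a different statement). So there is no internal proof to compare your sketch against.

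Taken on its own terms, your outline follows a recognizable route toward the Schulze--Walther result: pass to $\FL^{-1}$, factor $\phi=j\circ i$ through the subtorus of $(\Gm)^N$ (which is indeed closed because $\Z A=\Z^n$), identify the binomial and Euler relations, and then push forward across the toric boundary. However, the final step is a genuine gap. To promote the natural surjection from the cyclic $\DD_{\A^N}$-module cut out by the binomial and Euler operators onto $\int_\phi\mathcal{O}_{(\Gm)^n}x^c$ (or vice versa) into an isomorphism, agreement of \emph{holonomic rank} is not enough: a surjection of holonomic modules with equal generic rank can have a nonzero kernel supported on a proper subvariety, and Adolphson's characteristic-variety computation together with the rank formula only controls the generic rank, not the holonomic length. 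What actually closes the argument in the non-resonant case is the irreducibility of $M_A(c)$ (Theorem~2.15 of \cite{GKZ}, which this paper invokes elsewhere in the proof of the residue-integral theorem) combined with the existence of a nonzero morphism; or, alternatively, an exact computation of characteristic cycles; or Schulze--Walther's own Euler--Koszul argument, which avoids this comparison entirely by directly exhibiting the module as the zeroth Euler--Koszul homology. Your sketch also glosses over the core technical content --- that under non-resonance the $!$-extension across $\A^N\setminus(\Gm)^N$ coincides with the naive cyclic presentation --- which is precisely the hard theorem you would be trying to prove, so ``the natural presentation survives unchanged'' cannot be taken as given without an argument along the lines of Euler--Koszul acyclicity or a $b$-function computation.
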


For readers' convenience, we include a proof of an isomorphism which rewrites the right-hand side of (\ref{SWisom}) as a direct image of an integrable connection. The readers may find a similar argument in \cite{ET}.

\begin{prop}
Let $f_j\in\mathcal{O}(X)\setminus\C$ $(j=1,\dots,p)$ be non-constant regular functions. Put $f=(f_1,\dots,f_p):X\rightarrow\A^p_\zeta$. Define the Fourier-Laplace transform $\FL:D^b_{q.c.}(\mathcal{D}_{\A^p_\zeta})\rightarrow D^b_{q.c.}(\mathcal{D}_{\A^p_z})$ by the formula 
\begin{equation}
\FL(N)=\int_{\pi_z}(\mathbb{L}\pi^*_\zeta N)\overset{\mathbb{D}}{\otimes}\mathcal{O}_{\A^p_\zeta\times\A^p_z}e^{z\cdot\zeta},
\end{equation}
where 
\begin{equation}
\A^p_z\overset{\pi_z}{\leftarrow}\A^p_z\times\A^p_\zeta\overset{\pi_\zeta}{\rightarrow}\A^p_\zeta.
\end{equation}
Let $\pi:X\times\A^p_z\rightarrow\A^p_z$ be the canonical projection. Under these settings, we have, for any $M\in D^b_{q.c.}(\DD_X),$
\begin{equation}
\FL\Big(\int_f M\Big)\simeq\int_\pi\left\{ (M\boxtimes\mathcal{O}_{\A^p_z})\overset{\mathbb{D}}{\otimes}(\mathcal{O}_{X\times\A^p_z}e^{\sum_{j=1}^pz_jf_j})\right\}.
\end{equation}
\end{prop}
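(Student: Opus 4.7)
The strategy is to unfold the definition of $\FL$, then push the inverse image $\mathbb{L}\pi_\zeta^*$ past $\int_f$ via base change, apply the projection formula to bring the exponential kernel inside the direct image, and finally collapse the resulting composition of direct images into one.

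Concretely, I would start from
\begin{equation*}
\FL\Big(\int_f M\Big)=\int_{\pi_z}\Big(\mathbb{L}\pi_\zeta^*\int_f M\Big)\overset{\mathbb{D}}{\otimes}\mathcal{O}_{\A^p_\zeta\times\A^p_z}e^{z\cdot\zeta}
\end{equation*}
and consider the cartesian square
\begin{equation*}
\xymatrix{
X\times\A^p_z \ar[r]^{f\times\id} \ar[d]_{p_X} & \A^p_\zeta\times\A^p_z \ar[d]^{\pi_\zeta}\\
X \ar[r]^{f} & \A^p_\zeta,
}
\end{equation*}
where $p_X$ is the projection. Base change for $\DD$-modules gives $\pi_\zeta^\dagger\int_f M\simeq\int_{f\times\id}p_X^\dagger M$; since $\pi_\zeta$ and $p_X$ have the same relative dimension $p$, the shifts cancel and I obtain $\mathbb{L}\pi_\zeta^*\int_f M\simeq\int_{f\times\id}(M\boxtimes\mathcal{O}_{\A^p_z})$.

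Next I would invoke the projection formula for the proper-up-to-holonomicity direct image $\int_{f\times\id}$: for a $\DD$-module $P$ on the target,
\begin{equation*}
\Big(\int_{f\times\id}N\Big)\overset{\mathbb{D}}{\otimes}P\simeq\int_{f\times\id}\Big(N\overset{\mathbb{D}}{\otimes}\mathbb{L}(f\times\id)^*P\Big).
\end{equation*}
Applied with $N=M\boxtimes\mathcal{O}_{\A^p_z}$ and $P=\mathcal{O}_{\A^p_\zeta\times\A^p_z}e^{z\cdot\zeta}$, and using $\mathbb{L}(f\times\id)^*\mathcal{O}_{\A^p_\zeta\times\A^p_z}e^{z\cdot\zeta}\simeq\mathcal{O}_{X\times\A^p_z}e^{\sum_j z_j f_j}$ (which is immediate since the pull-back of the rank-one connection $e^{z\cdot\zeta}$ along a non-characteristic morphism is just the rank-one connection with the substituted phase), this rewrites the integrand as
\begin{equation*}
\int_{f\times\id}\Big[(M\boxtimes\mathcal{O}_{\A^p_z})\overset{\mathbb{D}}{\otimes}\mathcal{O}_{X\times\A^p_z}e^{\sum_{j=1}^p z_jf_j}\Big].
\end{equation*}

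Finally I would use functoriality of the direct image, $\int_{\pi_z}\circ\int_{f\times\id}\simeq\int_{\pi_z\circ(f\times\id)}$, together with the identity $\pi=\pi_z\circ(f\times\id):X\times\A^p_z\to\A^p_z$, to collapse the two direct images into $\int_\pi$, yielding the claimed isomorphism. The only non-trivial point that needs care is the verification that the hypotheses for the base change and projection formulae are met in the quasi-coherent setting; this is where I would be most vigilant, but since $p_X$ and $\pi_\zeta$ are smooth (in fact affine) projections and the kernel $\mathcal{O}e^{z\cdot\zeta}$ is an integrable connection, both formulae apply without additional assumption, and the argument reduces to the standard derived-categorical manipulations above.
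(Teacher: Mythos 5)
Your proof is correct and follows essentially the same route as the paper: unfold $\FL$, identify $\mathbb{L}\pi_\zeta^*\int_fM$ with $\int_{f\times\id}(M\boxtimes\mathcal{O}_{\A^p_z})$, apply the projection formula with the exponential kernel, and compose the direct images. The only cosmetic difference is that you phrase the middle step as a base change along the smooth projection $\pi_\zeta$ (with the shift cancellation argument), while the paper expresses the same identity as compatibility of the external tensor product with direct images ($\int_fM\boxtimes\mathcal{O}\simeq\int_{f\times\id}(M\boxtimes\mathcal{O})$, HTT Prop.\ 1.5.30); these are equivalent.
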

\begin{proof}
Consider the following commutative diagram

\begin{equation}
\xymatrix{
& X\times\A^p_z \ar[d]_{\pi } \ar[r]^{f\times \id}& \A^p_\zeta\times\A^p_z \ar[dl]_{\pi_z}\\
& \A^p_z  &
}.
\end{equation}
\noindent
By the projection formula, we have a canonical isomorphism
\begin{align}
\FL\Big(\int_fM\Big)&\simeq\int_{\pi_z}\left\{ \Big(\left(\int_fM\right)\boxtimes\mathcal{O}_{\A^p_z}\Big)\overset{\mathbb{D}}{\otimes}\mathcal{O}_{\A^p_z\times\A^p_\zeta}e^{z\cdot\zeta }\right\}\\
 &\simeq\int_{\pi_z}\left\{ \Big(\int_{f\times\id_z}M\boxtimes\mathcal{O}_{\A^p_z}\Big)\overset{\mathbb{D}}{\otimes}\mathcal{O}_{\A^p_z\times\A^p_\zeta}e^{z\cdot\zeta }\right\}\\
 &\simeq\int_\pi\left\{ \left(M\boxtimes\mathcal{O}_{\A^p_z}\right)\overset{\mathbb{D}}{\otimes}(\mathcal{O}_{X\times\A^p_z}e^{\sum_{j=1}^pz_jf_j})\right\}.
\end{align}

\end{proof}
\noindent
If we take $X$ to be $(\Gm)^n_x,$ $M$ to be $\mathcal{O}_{(\Gm)^n_x}x^c,$ and $f$ to be $f=(x^{{\bf a}(1)},\dots,x^{{\bf a}(N)}),$ we have
\begin{equation}
\FL\Big(\int_f\mathcal{O}_{(\Gm)^n_x}x^c\Big)\simeq\int_\pi\mathcal{O}_{(\Gm)^n_x\times\A^N_z}x^ce^{h_{z}(x)},
\end{equation}
where $h_z(x)=\displaystyle\sum_{j=1}^Nz_jx^{{\bf a}(j)}.$

\begin{cor}
If $c$ is non-resonant, one has a canonical isomorphism
\begin{equation}
M_A(c)\simeq\int_\pi\mathcal{O}_{(\Gm)^n\times\A^N}x^ce^{h_z(x)}.
\end{equation}
\end{cor}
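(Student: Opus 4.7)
The plan is to obtain the isomorphism as a direct composition of the two results recalled immediately before the corollary, namely the Schulze--Walther theorem
\begin{equation*}
M_A(c)\simeq \FL\circ\int_\phi\mathcal{O}_{(\Gm)^n_x}x^c,
\end{equation*}
valid under non-resonance of $c$, and the general formula for the Fourier--Laplace transform of a direct image
\begin{equation*}
\FL\Big(\int_f M\Big)\simeq\int_\pi\left\{ (M\boxtimes\mathcal{O}_{\A^p_z})\overset{\mathbb{D}}{\otimes}(\mathcal{O}_{X\times\A^p_z}e^{\sum_{j=1}^pz_jf_j})\right\},
\end{equation*}
just proven in the preceding proposition. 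Specialising the latter to $X=(\Gm)^n_x$, $p=N$, $M=\mathcal{O}_{(\Gm)^n_x}x^c$, and $f=\phi=(x^{{\bf a}(1)},\dots,x^{{\bf a}(N)})$ is exactly the setup treated in the paragraph just above the corollary.

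The only computation to carry out is the identification of the tensor product
\begin{equation*}
\bigl(\mathcal{O}_{(\Gm)^n_x}x^c\boxtimes\mathcal{O}_{\A^N_z}\bigr)\overset{\mathbb{D}}{\otimes}\mathcal{O}_{(\Gm)^n_x\times\A^N_z}e^{h_z(x)}\;\simeq\;\mathcal{O}_{(\Gm)^n_x\times\A^N_z}x^c e^{h_z(x)},
\end{equation*}
which is immediate from the definition of the twisted structure sheaves since $x^c$ depends only on $x$, $e^{h_z(x)}$ has a regular exponent on $(\Gm)^n_x\times\A^N_z$, and both factors are integrable connections so the derived tensor product reduces to the ordinary one.

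Chaining the two isomorphisms above then yields
\begin{equation*}
M_A(c)\;\simeq\;\FL\Big(\int_\phi\mathcal{O}_{(\Gm)^n_x}x^c\Big)\;\simeq\;\int_\pi\mathcal{O}_{(\Gm)^n\times\A^N}x^ce^{h_z(x)},
\end{equation*}
as required. There is no real obstacle: the substantive content is packaged in the cited theorem of Schulze--Walther (where the non-resonance hypothesis enters) and in the projection-formula manipulation of the preceding proposition. The one point worth verifying carefully is that the canonical isomorphisms are compatible with the natural $\DD_{\A^N_z}$-module structures on both sides, which follows because each intermediate isomorphism (Schulze--Walther, base change, projection formula) is constructed canonically in the derived category of $\DD$-modules.
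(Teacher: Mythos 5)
Your proposal is correct and follows exactly the paper's approach: the paper deduces the corollary by specialising the preceding Fourier--Laplace proposition to $X=(\Gm)^n_x$, $M=\mathcal{O}_{(\Gm)^n_x}x^c$, $f=\phi$, and then composing with the Schulze--Walther isomorphism, which is precisely your chain.
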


\noindent
Summing up all the argument above, we have the following

\begin{thm}\label{thm:mainDresult}
Suppose that the parameter 
$d=\begin{pmatrix}
\gamma\\
c
\end{pmatrix}
$ is non-resonant and $\gamma_l\notin\Z$ for $l=1,\dots,k$. We put $N=N_0+N_1+\cdots+N_k$ and define a $(n+k)\times N$ matrix $A$ by 

\begin{equation}
A
=
\left(
\begin{array}{ccc|ccc|ccc|c|ccc}
0&\cdots&0&1&\cdots&1&0&\cdots&0&\cdots&0&\cdots&0\\
\hline
0&\cdots&0&0&\cdots&0&1&\cdots&1&\cdots&0&\cdots&0\\
\hline
&\vdots& & &\vdots& & &\vdots& &\ddots& &\vdots& \\
\hline
0&\cdots&0&0&\cdots&0&0&\cdots&0&\cdots&1&\cdots&1\\
\hline
&A_0& & &A_1& & &A_2& &\cdots & &A_k& 
\end{array}
\right).
\end{equation}
Then, one has a sequence of canonical isomorphisms
\begin{align}
M_A(d)&\simeq\int_\varpi\mathcal{O}_{\A^N_z\times(\Gm)_y^k\times(\Gm)_x^n}y^\gamma x^c e^{h_z(y,x)}\\
 &\simeq\int_\pi\mathcal{O}_{\A^N_z\times (\Gm)_x^n\setminus\lef h_{1,z^{(1)}}\cdots h_{k,z^{(k)}}=0\righ }h_1^{-\gamma_1}\cdots h_k^{-\gamma_k}x^c e^{h_{0,z^{(0)}}(x)}\\
 &\simeq\int_{\tilde{\varpi}_z}\mathcal{O}_{\A^N_z\times(\Gm)^k_y\times(\Gm)_x^n\setminus\lef   (1-y_1h_{1,z^{(1)}})\cdots(1- y_kh_{k,z^{(k)}})=0\righ}y^\gamma x^ce^{h_{0,z^{(0)}}(x)}.
\end{align}
\end{thm}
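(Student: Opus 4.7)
The statement is a chain of three canonical isomorphisms, each of which is essentially an application of a result already established in this section. My plan is to assemble them in order.

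First I would apply the Schulze--Walther theorem together with the corollary proved just above it. For the matrix $A$ of the statement, the monomial map $\phi:(\Gm)^{n+k}_{(y,x)}\to\A^N_z$, $\phi(y,x)=((y,x)^{\mathbf{a}(1)},\ldots,(y,x)^{\mathbf{a}(N)})$, sends $(y,x)$ to the tuple whose $j$-th coordinate is $x^{\ca^{(0)}(j)}$ when $j$ lies in the $A_0$ block, and is $y_l\,x^{\ca^{(l)}(j)}$ when $j$ lies in the $A_l$ block ($l\geq 1$). Hence the generic Laurent polynomial associated to $A$ on the torus $(\Gm)^{n+k}_{(y,x)}$ equals
$$\sum_{j=1}^{N}z_j(y,x)^{\mathbf{a}(j)}=h_{0,z^{(0)}}(x)+\sum_{l=1}^{k}y_l\,h_{l,z^{(l)}}(x)=h_z(y,x),$$
and the character $(y,x)^d$ equals $y^\gamma x^c$. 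Since $d$ is non-resonant, the Schulze--Walther isomorphism combined with the corollary on Fourier--Laplace transforms of torus direct images yields the first identification
$$M_A(d)\simeq\FL\Bigl(\int_\phi\mathcal{O}_{(\Gm)^{n+k}}\,y^\gamma x^c\Bigr)\simeq\int_\varpi\mathcal{O}_{\A^N_z\times(\Gm)^k_y\times(\Gm)^n_x}\,y^\gamma x^c\,e^{h_z(y,x)}.$$

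Second, since $\gamma_l\notin\Z$ for every $l=1,\dots,k$, I would invoke \cref{thm:CayleyTrick}, assigning $h_{0,z^{(0)}}$ the role of the exponential factor and $h_{1,z^{(1)}},\dots,h_{k,z^{(k)}}$ the role of the multiplicative factors. This produces exactly the second identification
$$\int_\varpi\mathcal{O}\,y^\gamma x^c\,e^{h_z(y,x)}\simeq\int_\pi\mathcal{O}_{\A^N_z\times(\Gm)^n_x\setminus\{h_{1,z^{(1)}}\cdots h_{k,z^{(k)}}=0\}}\,h_1^{-\gamma_1}\cdots h_k^{-\gamma_k}x^c\,e^{h_{0,z^{(0)}}(x)}.$$
Third, still under $\gamma_l\notin\Z$, I would apply \cref{thm:ComposedResidue} to turn the Euler-type integrand into the mixed residue form, obtaining the final link
$$\int_\pi\mathcal{O}\,h_1^{-\gamma_1}\cdots h_k^{-\gamma_k}x^c\,e^{h_{0,z^{(0)}}(x)}\simeq\int_{\tilde\varpi_z}\mathcal{O}_{\tilde Y}\,y^\gamma x^c\,e^{h_{0,z^{(0)}}(x)}.$$
Concatenating the three isomorphisms proves the theorem.

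The only substantive verification is the one in the first step: unwinding $\phi$ for the specific block structure of $A$ to match the Laurent polynomial $h_z(y,x)$ prescribed in the statement, and checking that the parameter condition of Schulze--Walther is precisely our non-resonance hypothesis on $d$. Once that bookkeeping is done, the remaining two steps are verbatim invocations of \cref{thm:CayleyTrick} and \cref{thm:ComposedResidue}, whose parameter hypotheses coincide with those of the present theorem.
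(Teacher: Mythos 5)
Your proposal is correct and follows essentially the same route the paper intends: the paper offers no separate proof for \cref{thm:mainDresult} but merely says ``Summing up all the argument above,'' meaning exactly the chain you assemble---the Schulze--Walther corollary applied to the $(n+k)\times N$ matrix $A$ with torus coordinates $(y,x)$ and parameter $d$, then \cref{thm:CayleyTrick}, then \cref{thm:ComposedResidue}. Your explicit unwinding of $\phi$ on the block structure to recover $h_z(y,x)=h_{0,z^{(0)}}(x)+\sum_{l}y_l h_{l,z^{(l)}}(x)$ and $(y,x)^d=y^\gamma x^c$ is precisely the bookkeeping the paper leaves to the reader.
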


As an application of this isomorphism, we can prove the following result which is analogous to Theorem 2.10. of \cite{GKZ} and whose proof was sketched in \cite{B}\S6. In order to formulate and prove it, we revise some notation. For any smooth algebraic varieties $X$ and $Y$ and for any morphism $f:X\rightarrow Y$, a functor $\int_{f!}:D^b_h(\DD_X)\rightarrow D^b_h(\DD_Y)$ (resp. $\int_{f!}:D^b_{r. h.}(\DD_X)\rightarrow D^b_{r.h.}(\DD_Y)$) is defined by
\begin{equation}\label{proper}
\int_{f!}=\mathbb{D}_Y\circ\int_f\circ\mathbb{D}_X,
\end{equation}
where $\mathbb{D}_X$ is the holonomic dual functor. If $DR^{an}_X$ denotes analytic de-Rham functor, we have a relation
\begin{equation}\label{commutative}
\sol_X[\dim X]\simeq DR^{an}\circ\mathbb{D}_X
\end{equation}
as a functor from $D^b_h(\DD_X)$ to $D^b_c(\C_X)$ and a commutativity relation
\begin{equation}\label{RH}
DR^{an}_Y\int_{f!}\simeq\R f_!\circ DR^{an}_X
\end{equation}
as a functor from $D^b_{r.h.}(\DD_X)$ to $D^b_c(\C_Y)$. The second commutativity is a part of Riemann-Hilbert correspondence. 

Now we need to remember the definition of the Newton non-degenerate locus.
\begin{defn} 
Suppose $A$ is an $n\times N$ integer matrix $A=({\bf a}(1)\mid \cdots\mid{\bf a}(N))$. We put $h_z(x)=\displaystyle\sum_{j=1}^Nz_jx^{{\bf a}(j)}.$ For any face $\Gamma<\Delta_A\overset{\rm def}{=}c.h.\{0,{\bf a}(1),\dots,{\bf a}(N)\}$, we put $h_z^\Gamma(x)=\displaystyle\sum_{{\bf a}(j)\in\Gamma}z_jx^{{\bf a}(j)}$. Then, we say $z\in\mathbb{A}^N$ belongs to the Newton non-degenerate locus of $A$ if, for any face $\Gamma<\Delta_A$ such that $0\notin\Gamma$, we have an identity
\begin{equation}\label{NewtonNonDeg}
\left\{ x\in(\Gm)^n\mid \frac{\partial h_z^\Gamma}{\partial x_1}(x)=\cdots=\frac{\partial h_z^\Gamma}{\partial x_n}(x)=0\right\}=\varnothing.
\end{equation}
The set of Newton non-degenerate points $z$ of $A$ is denoted by $\Omega$.
\end{defn}
The following result was first proved by A. Adolphson.
\begin{thm}[\cite{A}, COROLLARY 3.8.]
For any parameter vector $c\in\C^{n\times 1}$, the GKZ system $M_A(c)$ is a connection on $\Omega$.
\end{thm}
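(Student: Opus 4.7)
The strategy is to verify that the characteristic variety $\operatorname{Char}(M_A(c))$ meets $T^*\Omega$ only along the zero section; combined with the already-known holonomicity, this forces $M_A(c)|_\Omega$ to be $\mathcal{O}_\Omega$-coherent and therefore (since $\Omega$ is smooth) a flat connection. First I would reduce to a convenient value of the parameter: the principal symbols of the operators $E_i$ and $\Box_u$ do not involve $c$, so $\operatorname{Char}(M_A(c))$ is independent of $c$, and it suffices to establish the claim for a single non-resonant choice. For such $c$, \cref{thm:mainDresult} specialised to $k=0$ (keeping only the exponential factor) provides the canonical isomorphism
\begin{equation*}
M_A(c)\;\simeq\;\int_{\varpi}\mathcal{O}_{\A^N_z\times(\Gm)^n_x}\,x^c e^{h_z(x)},
\end{equation*}
with $\varpi:\A^N_z\times(\Gm)^n_x\to\A^N_z$ the second projection.

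Next I would compute this direct image on $\Omega$ fibrewise. By base change, the fiber at $z_0\in\Omega$ is the twisted algebraic de Rham cohomology $\mathbb{H}^\bullet_{\mathrm{dR}}((\Gm)^n,\mathcal{O}\,x^c e^{h_{z_0}(x)})$. The key input is a Kouchnirenko--Adolphson type rank formula: Newton non-degeneracy of $h_{z_0}$, i.e.\ condition (\ref{NewtonNonDeg}), forces this complex to be concentrated in the top degree with dimension equal to $\vol_{\Z}(\Delta_A)$ for every $z_0\in\Omega$. Concretely, one compactifies $(\Gm)^n$ inside a smooth projective toric variety whose fan refines the normal fan of $\Delta_A$, extends $h_{z_0}$ so that its pole locus becomes a simple normal crossing divisor, and then exploits condition (\ref{NewtonNonDeg}) face by face to show that the relative critical locus of the extended morphism is proper over $\A^1$. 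This properness (cohomological tameness of $h_{z_0}$) is the main obstacle and is the technical heart of the argument.

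Finally, upper-semicontinuity of fiber dimension combined with the constancy supplied by the rank formula shows that the restricted direct image is $\mathcal{O}_\Omega$-coherent and locally free of rank $\vol_{\Z}(\Delta_A)$. Consequently $\operatorname{Char}(M_A(c))\cap T^*\Omega$ is contained in the zero section for this particular non-resonant $c$, and hence for every $c\in\C^{n\times 1}$ by the reduction in the first paragraph, which completes the proof.
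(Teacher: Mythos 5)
The statement in question is cited verbatim from Adolphson (\cite{A}, Corollary 3.8); the paper supplies no proof of its own, so there is nothing in-text to compare against. Your plan is a genuinely different route — Adolphson's original argument is a direct characteristic-variety computation via initial ideals, not a fibrewise de Rham / Gauss--Manin argument — but it contains a load-bearing gap at the very first step.

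You claim that because the principal symbols of the generators $E_i$ and $\Box_u$ do not involve $c$, the characteristic variety $\operatorname{Char}(M_A(c))$ is independent of $c$. This inference is not valid: $\operatorname{Char}(M_A(c))$ is the zero set of the ideal of symbols of the \emph{entire} GKZ left ideal, and taking symbols does not commute with taking ideals. Passing from generators to the full ideal can produce elements whose principal symbols carry $c$-dependent coefficients, and indeed this is precisely the mechanism behind the rank-jumping phenomenon for resonant parameters (Matusevich--Miller--Walther). The fact that $\operatorname{Char}(M_A(c))$ is independent of $c$ is true, but it is a nontrivial theorem (see e.g.\ \cite{SST} or Adolphson's own analysis) that is essentially as deep as Corollary 3.8 itself, so invoking it here without proof moves the hard content rather than removing it. Without that theorem your reduction to a single non-resonant $c$ does not go through, and everything downstream collapses. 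Moreover, even for the single non-resonant $c$, you concede that cohomological tameness of $h_{z_0}$ over $\mathbb{A}^1$ --- the step that actually forces concentration in top degree and constant dimension --- ``is the main obstacle and is the technical heart of the argument,'' i.e.\ it is left open. Finally, there is a circularity risk: the Kouchnirenko--Adolphson rank formula you appeal to for the fibrewise de Rham cohomology is itself due to Adolphson and rests on very similar Newton-polyhedral estimates, while \cref{thm:mainDresult} via \cite{SW} postdates \cite{A} by fifteen years. So even if all the gaps were filled, the result would be a heavier, later-technology re-derivation rather than an independent proof.
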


We are now able to state and prove
\begin{thm}\label{thm:ResidueIntegral}
Suppose $h_{0,z^{(0)}}(x)=0.$ If the parameter $d$ is non-resonant and if there is a simplex $\sigma$ such that $\displaystyle\sum_{i\in\sigma^{(l)}}{}^t{\bf e}_iA_\sigma^{-1}d\notin\Z_{\leq 0}$ for any $l=1,\dots,k$ and that $A_\s^{-1}(d+A_{\bs}\Z^{\bs}_{\geq 0})\cap(\C\setminus\Z_{>0})^\s\neq\varnothing$, then, one has an isomorphism of local systems
\begin{equation}\label{Integration}
\int:R^{n+k}\varpi_!(\C_{\tilde{Y}^{an}} y^\gamma x^c)\restriction_{\Omega^{an}}\ni \Gamma\mapsto\int_\Gamma\frac{y^{\gamma-1} x^{c-1}dydx}{(1-y_1h_{1,z^{(1)}}(x))\cdots(1-y_kh_{k,z^{(k)}}(x))}\in\sol_{M_A(c)}\restriction_{\Omega^{an}}.
\end{equation}
\end{thm}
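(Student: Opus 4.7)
The strategy is to apply the solution functor to the D-module isomorphism of \cref{thm:mainDresult} and then identify the resulting Riemann--Hilbert isomorphism with the concrete integration map.

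\textbf{Step 1.} With $h_{0,z^{(0)}}=0$, \cref{thm:mainDresult} gives the canonical D-module identification
\[
M_A(d)\;\simeq\;\int_{\tilde{\varpi}}\mathcal{O}_{\tilde{Y}}\,y^\gamma x^c.
\]
The matrix $A$ appearing here admits the linear functional $l:\mathbf{a}(j)\mapsto 1$ (the sum of its top $k$ rows), so $M_A(d)$ is regular holonomic by the remark after \cref{prop:GevreyOrder}; by Adolphson's theorem its restriction to $\Omega^{an}$ is a connection. Hence both sides of the above isomorphism lie in $D^b_{r.h.}(\DD_\Omega)$.

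\textbf{Step 2.} Combining $\sol_X[\dim X]\simeq DR^{an}_X\circ\mathbb{D}_X$ with the defining identity $\int_{f!}=\mathbb{D}_Y\circ\int_f\circ\mathbb{D}_X$ and the Riemann--Hilbert commutativity $DR^{an}_Y\circ\int_{f!}\simeq Rf_!\circ DR^{an}_X$, one derives the formal identity
\[
\sol_Y\circ\int_f\;\simeq\;Rf_!\circ\sol_X\,[\dim X-\dim Y]
\]
for regular holonomic D-modules. Taking $f=\tilde\varpi$ gives $\dim X-\dim Y=n+k$, while a direct check on the twisted D-action identifies $\sol(\mathcal{O}_{\tilde Y}y^\gamma x^c)$ with the rank-one local system $\C_{\tilde Y^{an}}y^\gamma x^c$ in the notation of the paper. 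Passing to $\Homo^0$ yields the abstract sheaf isomorphism
\[
\sol(M_A(d))|_{\Omega^{an}}\;\simeq\;R^{n+k}\tilde\varpi_!(\C_{\tilde Y^{an}}y^\gamma x^c)|_{\Omega^{an}}.
\]

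\textbf{Step 3.} It remains to verify that this abstract RH isomorphism coincides with the explicit integration map stated in the theorem. Unwinding the proof of \cref{thm:mainDresult}, the D-module equivalence is built from (i) the Cayley-trick isomorphism (\cref{thm:CayleyTrick}), (ii) the composed-residue isomorphism (\cref{thm:ComposedResidue}), and (iii) the Schulze--Walther identification of $M_A(c)$ with a Fourier--Laplace direct image. Each step has a geometric counterpart on cycles: Hankel-contour integration for (i); Leray's composed coboundary/residue formula $\tfrac{1}{(2\pi\ii)^k}\int_{\delta^{(k)}\circ\cdots\circ\delta^{(1)}\gamma}\omega=\int_\gamma\mathrm{res}^{(1)}\circ\cdots\circ\mathrm{res}^{(k)}(\omega)$ for (ii), applied to the residue integrand $\omega=\frac{y^{\gamma-1}x^{c-1}dydx}{\prod_l(1-y_lh_{l,z^{(l)}})}$; and the natural integration pairing for (iii). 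Chasing these compatibilities shows the RH isomorphism is precisely $\Gamma\mapsto\int_\Gamma\omega$.

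\textbf{Main obstacle.} The essential difficulty lies in the functorial compatibility check of Step 3: each of the canonical D-module isomorphisms must be shown to induce, via Riemann--Hilbert, the natural geometric operation on cycles (pullback, coboundary, or direct integration). As a sanity check/surjectivity argument, the hypothesis on the simplex $\sigma$ forces the matrix $T_\sigma$ of \cref{thm:fundamentalthm2} to be invertible, so the cycles $\{\Gamma_{\sigma,\tilde{\bf k}(j)}\}_{j=1}^r$ map under the integration map to a basis of $\sol(M_A(d))$ on $U_T\cap\Omega^{an}$. Since both sides are locally constant sheaves of the same generic rank and the map is surjective at some point of $\Omega^{an}$, the isomorphism extends over all of $\Omega^{an}$.
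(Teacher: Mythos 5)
Your Steps 1 and 2 coincide with the paper's opening: apply $\sol_X[\dim X]\simeq DR^{an}\circ\mathbb{D}_X$, the definition $\int_{f!}=\mathbb{D}\circ\int_f\circ\mathbb{D}$, and Riemann--Hilbert to \cref{thm:mainDresult} to get the abstract sheaf isomorphism $\sol(M_A(d))|_{\Omega^{an}}\simeq R^{n+k}\tilde\varpi_!(\C_{\tilde Y}y^\gamma x^c)|_{\Omega^{an}}$, and note (via \cref{prop:ResIntIsSol}) that the concrete integration map is a well-defined morphism of local systems. Where you diverge is in how to conclude that this concrete morphism is itself an isomorphism.

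Your Step 3 proposes to unwind the canonical isomorphisms of \cref{thm:CayleyTrick}, \cref{thm:ComposedResidue}, and the Schulze--Walther theorem and match them one-by-one with geometric operations on cycles, so that the abstract RH isomorphism is \emph{literally} the integration map. You correctly identify this as ``the essential difficulty,'' and the paper deliberately avoids exactly this compatibility chase. Its shortcut is: since $d$ is non-resonant, Theorem~2.15 of \cite{GKZ} together with \cref{thm:mainDresult} shows that both local systems are \emph{irreducible}; by Schur's lemma a morphism between isomorphic irreducible local systems is either zero or an isomorphism; and the morphism is non-zero because \cref{thm:fundamentalthm2} exhibits at least one cycle $\Gamma_{\sigma,\tilde{\bf k}}$ whose residue integral equals a non-zero combination of $\Gamma$-series under the stated genericity hypotheses. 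This irreducibility argument is the key idea missing from your proof.

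Your fallback ``surjectivity argument'' does not close the gap. The hypothesis of the theorem asserts genericity only for a \emph{single} simplex $\sigma$. From \cref{thm:fundamentalthm2} applied to this one $\sigma$, the invertibility of $T_\sigma$ yields $r=\vol_\Z(\sigma)$ linearly independent solutions $\{f_{\sigma,\tilde{\bf k}(j)}\}_{j=1}^r$, not a basis: the full rank of $M_A(d)$ is $\vol_\Z(\Delta_A)=\sum_{\sigma'\in T}\vol_\Z(\sigma')$, so unless $T$ consists of a single simplex you only obtain a proper subspace of the solution space. Thus ``the map is surjective at some point'' does not follow, and the rank-equality argument you rely on does not apply. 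To get by with a single simplex (as the theorem's hypothesis forces) you genuinely need the irreducibility of $\sol(M_A(d))|_{\Omega^{an}}$, which is what the paper invokes.
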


\begin{proof}
By (\ref{proper}), (\ref{commutative}), and (\ref{RH}), we have
\begin{align}
\sol_{\C^N}\left(\int_{\tilde{\varpi}_z}\mathcal{O}_{\tilde{Y}}y^\gamma x^c\right)&\simeq DR^{an}_{\C^N}\int_{\tilde{\varpi}_{z!}}\mathbb{D}_{\tilde{Y}}\left(\mathcal{O}_{\tilde{Y}}y^\gamma x^c\right)[-N]\\
 &\simeq\R_{\tilde{\varpi}_{z!}}DR^{an}_{\tilde{Y}}\mathbb{D}_{\tilde{Y}}\left(\mathcal{O}_{\tilde{Y}}y^\gamma x^c\right)[-N]\\
 &\simeq\R_{\tilde{\varpi}_{z!}}\sol_{\tilde{Y}}\left(\mathcal{O}_{\tilde{Y}}y^\gamma x^c\right)[k]\\
 &\simeq\R_{\tilde{\varpi}_{z!}}\left(\C_{\tilde{Y}}y^\gamma x^c\right)[k]\label{SheafIsom}.
\end{align}
(\ref{SheafIsom}) combined with \cref{thm:mainDresult} shows that $R^{n+k}_{\tilde{\varpi}_{z!}}\left(\C_{\tilde{Y}}y^\gamma x^c\right)\restriction_{\Omega^{an}}$ and $\sol_{M_A(d)}\restriction_{\Omega^{an}}$ are local systems isomorphic to each other and the morphism (\ref{Integration}) is well-defined by \cref{prop:ResIntIsSol}. Moreover, by Theorem 2.15 of \cite{GKZ} and \cref{thm:mainDresult}, we know that they are both irreducible since the parameter $d$ is non-resonant. Thus, it is enough to prove that the morphism (\ref{Integration}) is non-zero by Schur's lemma. The morphism is indeed non-zero by \cref{thm:fundamentalthm2}.
\end{proof}

\begin{rem}
We can prove an isomorphism of constructible (perverse) sheaves
\begin{equation}
\R\pi_{z!}\left(\C_{X^{an}}\cdot h_{1,z^{(1)}}^{-\gamma_1}\cdots h_{k,z^{(k)}}^{-\gamma_k}x^c\right)\simeq\R\tilde{\varpi}_{z!}\left( \C_{\tilde{Y}^{an}}y^\gamma x^c\right)[k]
\end{equation}
in the same way. This corresponds to the classical composed coboundary operation of J.Leray (\cite{L}).
\end{rem}

\end{section}

\begin{section}{Construction of integration contours for Mixed type integrals}

We conclude this paper with a formula which relates a basis of cycles of (\ref{MixedResidueInt}) (resp. (\ref{MixedInt})) to $\Gamma$-series. The computations can be carried out as in \S\ref{SectionLaplace}, \S\ref{SectionResidue} with a slight modification.

\begin{prop}
(\ref{MixedResidueInt}) is a solution of $M_A(d).$
\end{prop}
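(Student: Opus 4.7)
The plan is to mirror the proof of \cref{prop:ResIntIsSol}, with added book-keeping for the exponential factor attached to the columns of $A_0$. There are two families of equations to check, the Euler equations (\ref{EulerEq}) and the toric equations (\ref{ultrahyperbolic}).

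For the Euler equations, I would verify the quasi-homogeneity $f((\tau,t)^A z) = \tau^{-\gamma} t^{-c} f(z)$ for $(\tau,t) \in (\C^*)^k \times (\C^*)^n$ by the direct change of variables $x \mapsto t^{-1} x$, $y_l \mapsto \tau_l^{-1} y_l$. The critical observation is that the $A_0$ block has zeros in its top $k$ rows, so under $(\tau,t)^A$ the coefficients $z_j^{(0)}$ are scaled only by $t^{\mathbf{a}^{(0)}(j)}$, with no $\tau$ factor. This is precisely compatible with the fact that $e^{h_{0,z^{(0)})(x)}}$ contains no $y$ variable: the substitution in $x$ undoes the $t$-scaling of $z^{(0)}$ and the exponential returns to its original form, while the $l$-th denominator for $l \geq 1$ is treated exactly as in \cref{prop:ResIntIsSol}. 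Applying $\partial/\partial\tau_l$ and $\partial/\partial t_i$ at $(\tau,t)=(\mathbf{1},\mathbf{1})$ then produces the $n+k$ Euler operators.

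For the toric equations, I would compute $\partial^u f(z)$ for $u \in \Z^N_{\geq 0}$, decomposing $u = u^{(0)} + u^{(1)} + \cdots + u^{(k)}$ along the block structure. Differentiating in $z_j^{(0)}$ brings down a factor $x^{\mathbf{a}^{(0)}(j)}$ from the exponential with no combinatorial weight, while differentiating in $z_j^{(l)}$ for $l \geq 1$ acts on the $l$-th rational factor exactly as in the proof of \cref{prop:ResIntIsSol} and produces a $|u^{(l)}|!$. This yields
\begin{equation*}
\partial^u f(z) = |u^{(1)}|!\cdots |u^{(k)}|!\int_\Gamma \frac{e^{h_{0,z^{(0)}}(x)}\, y_1^{\gamma_1+|u^{(1)}|-1}\cdots y_k^{\gamma_k+|u^{(k)}|-1}\, x^{c+\sum_{l=0}^k A_l u^{(l)}-1}}{(1-y_1 h_{1,z^{(1)}}(x))^{|u^{(1)}|+1}\cdots (1-y_k h_{k,z^{(k)}}(x))^{|u^{(k)}|+1}}\, dy\, dx.
\end{equation*}
For $u \in L_A$, vanishing of the top $k$ entries of $Au$ gives $|u^{(l)}|=0$, hence $|u_+^{(l)}|=|u_-^{(l)}|$ for $l=1,\dots,k$, while vanishing of the bottom $n$ entries yields $\sum_{l=0}^k A_l u_+^{(l)} = \sum_{l=0}^k A_l u_-^{(l)}$. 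Substituting these into the displayed formula shows $\partial^{u_+} f(z) = \partial^{u_-} f(z)$.

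The main point of care, rather than a genuine obstacle, is that the $0$-th block contributes no row to $A$, so $|u^{(0)}|$ is unconstrained by $u \in L_A$, and correspondingly differentiating the exponential carries no $|u^{(0)}|!$ factor. The asymmetry between the exponential block (no factorial, no row) and the rational blocks (factorial present, row present) is exactly what is needed for the $x^{A_0 u^{(0)}}$ contribution to balance on both sides of $\partial^{u_+} f = \partial^{u_-} f$. Once this asymmetry is tracked, the verification is essentially identical to that of \cref{prop:ResIntIsSol}.
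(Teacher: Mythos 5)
Your proof is correct and is exactly the elaboration the paper intends: the paper's own proof is the single remark that the verification is the same as in \cref{prop:ResIntIsSol}. You have correctly identified the only new bookkeeping, namely that the exponential block contributes no top-$k$ rows in $A$ and no factorial in $\partial^u f$, and that these two omissions compensate each other in the $\partial^{u_+} f = \partial^{u_-} f$ check.
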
 
\end{section}

\begin{proof}
Same as \cref{prop:ResIntIsSol}.
\end{proof}

Now we are going to construct a standard basis of integral contours associated to a simplex. Take any $n+k$ simplex $\s\subset\{1,\dots,N\}$ such that $s_j\leq 1$ for all $j\in\bs$. We put

\begin{equation}
f(z)_{\s,0}=\frac{1}{(2\pi\sqrt{-1})^{m_\s}}\int_\Gamma\frac{e^{h_{0,z^{(0)}}(x)}y^{\gamma -1}x^{c-1}}{\Big(1-y_1h_{1,z^{(1)}}(x)\Big)\cdots \Big(1-y_kh_{k,z^{(k)}}(x)\Big)}dydx,
\end{equation}
where $\Gamma$ is an integration cycle to be specified and $m_\s$ is an integer defined by the formula
\begin{equation}
m_\s=
\begin{cases}
n+2k&(\s^{(0)}=\varnothing)\\
n+2k+1&(\s^{(0)}\neq\varnothing).
\end{cases}
\end{equation}
We consider a covering map
\begin{equation}
\T^{n+d}_{\tilde{x}}\rightarrow\T^\sigma_{\xi_\sigma}
\end{equation}
given by
\begin{equation}
\tilde{x}\mapsto \xi_\sigma=z_\s\tilde{x}^{A_\sigma}. 
\end{equation}
\noindent
Then, if we write $\tilde{x}=(y,x)$ and $\tilde{h}_{l,z^{(l)}}(\tilde{x})=y_lh_{l,z^{(l)}}(x)$, and if we assume our integration cycle is a pullback $\Gamma=p^*\gamma$, we have
\begin{equation}\label{TheIntegral}
f_{\s,0}(z)=\frac{z_\sigma^{-A_\sigma^{-1}d}}{(2\pi\sqrt{-1})^{m_\s}}\int_\gamma
\frac{
\exp\left\{\displaystyle\sum_{i\in\sigma^{(0)}}\xi_i+\sum_{j\in\bar{\sigma}^{(0)}}(z_\sigma^{-A_\sigma^{-1}{\bf a}(j)}z_j)\xi_\sigma^{A_\sigma^{-1}{\bf a}(j)}\right\}
\xi_\sigma^{A_\sigma^{-1}d-{\bf 1}}
}
{
\displaystyle
\prod_{l=1}^k
\left(
1-\sum_{i\in\sigma^{(l)}}\xi_i-\sum_{j\in\bar{\sigma}^{(l)}}(z_\sigma^{-A_\sigma^{-1}{\bf a}(j)}z_j)\xi_\sigma^{A_\sigma^{-1}{\bf a}(j)}
\right)
}d\xi_\sigma.
\end{equation}
\noindent
At this stage, we can observe that the integral (\ref{TheIntegral}) has a divergent nature in $\xi_{\sigma^{(0)}}$ direction and convergent nature in $\xi_{\sigma^{(l)}}\;\;(l=1,\dots,k)$ directions. Let us introduce the plane wave coordinate with respect to $\xi_{\sigma^{(0)}},$ namely we introduce new coordinate $(\rho,u_{\sigma^{(0)}})$ of $\T^{\sigma^{(0)}}$ defined by 
\begin{equation}
\xi_i=\rho u_i\;\;(i\in\sigma^{(0)}),\;\;\displaystyle\sum_{i\in\sigma^{(0)}}u_i=1.
\end{equation}
(\ref{TheIntegral}) is transformed into the form
\begin{align}
 &\frac{z_\sigma^{-A_\sigma^{-1}d}}{(2\pi\sqrt{-1})^{m_\s}}\times\nonumber\\
 &\int_\gamma
\frac{
\exp\left\{\rho+\displaystyle\sum_{j\in\bar{\sigma}^{(0)}}(z_\sigma^{-A_\sigma^{-1}{\bf a}(j)}z_j)\rho^{\sum_{i\in\sigma^{(0)}}{}^t{\bf e}_iA_\sigma^{-1}{\bf a}(j)}\displaystyle\prod_{i\in\sigma^{(0)}}u_{i}^{{}^t{\bf e}_iA_\sigma^{-1}{\bf a}(j)}\prod_{l=1}^k\displaystyle\prod_{i\in\sigma^{(l)}}\xi_{i}^{{}^t{\bf e}_iA_\sigma^{-1}{\bf a}(j)}\right\}
}
{
\displaystyle
\prod_{l=1}^k
\left(
1-\sum_{i\in\sigma^{(l)}}\xi_i-\sum_{j\in\bar{\sigma}^{(l)}}(z_\sigma^{-A_\sigma^{-1}{\bf a}(j)}z_j)\rho^{\sum_{i\in\sigma^{(0)}}{}^t{\bf e}_iA_\sigma^{-1}{\bf a}(j)}\prod_{i\in\sigma^{(0)}}u_{i}^{{}^t{\bf e}_iA_\sigma^{-1}{\bf a}(j)}\prod_{l=1}^k\prod_{i\in\sigma^{(l)}}\xi_{i}^{{}^t{\bf e}_iA_\sigma^{-1}{\bf a}(j)}
\right)
}\times\nonumber\\
 &\rho^{\sum_{i\in\sigma^{(0)}}{}^t{\bf e}_iA_\sigma^{-1}{\bf d}-1}\displaystyle\prod_{i\in\sigma^{(0)}}u_{i}^{{}^t{\bf e}_iA_\sigma^{-1}{\bf d}-1}\prod_{l=1}^k\prod_{i\in\sigma^{(l)}}\xi_{i}^{{}^t{\bf e}_iA_\sigma^{-1}{\bf d}-1}
d\rho\wedge du_{\sigma^{(0)}}\wedge d\xi_{\sigma^{(1)}}\wedge\cdots\wedge d\xi_{\sigma^{(k)}}.\label{TheIntegral2}
\end{align}
The following lemma is analogous to \cref{lem:sum}.

\begin{lem}\label{lem:sum2}
For any $l=1,\cdots,k$ and for any $j\in\barsigma^{(l)}$, one has
\begin{equation}
\sum_{i\in\sigma^{(m)}}{}^t{\bf e}_iA_\sigma^{-1}{\bf a}(j)=
\begin{cases}
1\; (m=l)\\
0\; (m\neq 0,l).
\end{cases}
\end{equation}
Moreover, if $j\in\barsigma^{(0)}$, one has
\begin{equation}
\sum_{i\in\sigma^{(m)}}{}^t{\bf e}_iA_\sigma^{-1}{\bf a}(j)=
0\;\; (m=1,\dots,k). 
\end{equation}
\end{lem}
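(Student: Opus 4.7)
The plan is to mimic the proof of \cref{lem:sum}, adapting it to the block structure of the matrix $A$ that appears in \cref{thm:mainDresult}. The key observation is that the matrix $A$ now has a $0$-th block $A_0$ whose top $k$ rows are all zero, in addition to the blocks $A_1, \dots, A_k$ that each contribute a single row of $1$'s in their designated position. Thus for $j \in I_l$ with $l \geq 1$ one has $\begin{pmatrix} I_k & O \end{pmatrix}{\bf a}(j) = {\bf e}_l$, while for $j \in I_0$ one has $\begin{pmatrix} I_k & O \end{pmatrix}{\bf a}(j) = 0$.

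Next, I would compute $\begin{pmatrix} I_k & O \end{pmatrix} A_\sigma$ by inspection of the block form. For each $m = 1, \dots, k$, the $m$-th row of this product has a $1$ in column $i$ exactly when $i \in \sigma^{(m)}$ and $0$'s elsewhere; in particular, columns indexed by $\sigma^{(0)}$ contribute nothing. Hence the $m$-th row of $\begin{pmatrix} I_k & O \end{pmatrix} A_\sigma$ equals $\sum_{i \in \sigma^{(m)}} {}^t{\bf e}_i$.

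The proof then concludes by combining these two observations. Applying the trivial identity $\begin{pmatrix} I_k & O \end{pmatrix} = \begin{pmatrix} I_k & O \end{pmatrix} A_\sigma A_\sigma^{-1}$, reading off the $m$-th row ($m \geq 1$), and evaluating at ${\bf a}(j)$, I obtain
\begin{equation*}
\sum_{i \in \sigma^{(m)}} {}^t{\bf e}_i A_\sigma^{-1} {\bf a}(j) = (m\text{-th component of } \begin{pmatrix} I_k & O \end{pmatrix}{\bf a}(j)).
\end{equation*}
Both statements of the lemma then follow directly from the case analysis of the first step: for $j \in \bar{\sigma}^{(l)}$ with $l \geq 1$ the right-hand side equals $\delta_{ml}$ (giving $1$ when $m = l$ and $0$ when $m \in \{1,\dots,k\} \setminus \{l\}$), and for $j \in \bar{\sigma}^{(0)}$ it equals $0$ for every $m \in \{1,\dots,k\}$.

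There is essentially no obstacle here: the proof is a direct block-matrix computation entirely parallel to \cref{lem:sum}. The only point requiring care is to handle the extra $0$-th block properly — specifically, to note that columns indexed by $\sigma^{(0)}$ drop out of $\begin{pmatrix} I_k & O \end{pmatrix} A_\sigma$ and that the lemma deliberately says nothing about the $m = 0$ case, reflecting that the $0$-th coordinate of $A_\sigma^{-1}{\bf a}(j)$ is governed by the exponential factor rather than by the residue-type factors.
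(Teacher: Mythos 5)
Your proof is correct and matches the paper's own argument, which is simply "same as \cref{lem:sum}": both proceed by inserting $A_\sigma A_\sigma^{-1}$ into the projection onto the first $k$ coordinates of ${\bf a}(j)$ and reading off rows block by block. The only cosmetic difference is that you use the $k\times(n+k)$ matrix $\begin{pmatrix}I_k & O\end{pmatrix}$ where the paper uses its padded $(n+k)\times(n+k)$ analogue, and you explicitly note (correctly) that columns indexed by $\sigma^{(0)}$ vanish in the top block — which is exactly the modification needed to adapt \cref{lem:sum} to the matrix of \cref{thm:mainDresult}.
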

\noindent
The proof is same as \cref{lem:sum}. From \cref{lem:sum2} and the equality
\begin{equation}
\displaystyle\sum_{m=0}^k\sum_{i\in\sigma^{(m)}}{}^t\tilde{\bf e}_iA_\sigma^{-1}{\bf a}(j)=|A_\sigma^{-1}{\bf a}(j)|=s_j,
\end{equation}
we obtain two equations on the degree of divergence
\begin{equation}\label{equality1}
\displaystyle\sum_{i\in\sigma^{(0)}}{}^t{\bf e}_iA_\sigma^{-1}{\bf a}(j)=s_j-1\;\;(j\in\barsigma^{(l)},\; l=1,\dots,k)
\end{equation}
and
\begin{equation}\label{equality2}
\displaystyle\sum_{i\in\sigma^{(0)}}{}^t{\bf e}_iA_\sigma^{-1}{\bf a}(j)=s_j\;\;(j\in\barsigma^{(0)}).
\end{equation}
Then we are naturally led to take the integration contour $\gamma$ as a product 
\begin{equation}
\gamma=\Gamma_{\sigma^{(0)},0}\times P_{\sigma^{(1)}}\times\cdots\times P_{\sigma^{(k)}},
\end{equation}
where $\Gamma_{\sigma^{(0)},0}$ is the product of Hankel contour and Pochhammer cycle associated to $\sigma^{(0)}$ and $P_{\sigma^{(l)},0}$ is the Pochhammer cycle associated to $\sigma^{(l)}$. By equations (\ref{equality1}) and (\ref{equality2}), we have two inequalities
\begin{equation}\label{equality1}
\displaystyle\sum_{i\in\sigma^{(0)}}{}^t{\bf e}_iA_\sigma^{-1}{\bf a}(j)\leq 0\;\;(j\in\barsigma^{(l)},\; l=1,\dots,k)
\end{equation}
and
\begin{equation}\label{equality2}
\displaystyle\sum_{i\in\sigma^{(0)}}{}^t{\bf e}_iA_\sigma^{-1}{\bf a}(j)\leq 1\;\;(j\in\barsigma^{(0)}),
\end{equation}
which combined with the formula (\ref{TheIntegral2}) ensure the convergence of (\ref{TheIntegral}) when $z\in U_\sigma$.
\noindent
Substituting the formula
\begin{equation}
\frac{1}
{
\displaystyle
\left(
1-\sum_{i\in\sigma^{(l)}}\xi_i-\sum_{j\in\bar{\sigma}^{(l)}}(z_\sigma^{-A_\sigma^{-1}{\bf a}(j)}z_j)\xi_\sigma^{A_\sigma^{-1}{\bf a}(j)}
\right)
}
=
\sum_{{\bf m}_l\in\mathbb{Z}^{\bar{\sigma}_l}_{\geq 0}}
\frac{|{\bf m}_l|!}{{\bf m}_l!}
\frac{
\xi_\sigma^{A_\sigma^{-1}A_{\bar{\sigma}^{(l)}}{\bf m}_l}
(z_\sigma^{-A_\sigma^{-1}A_{\bar{\sigma}^{(l)}}}z_{\bar{\sigma}^{(l)}})^{{\bf m}_l}
}
{
\left(
\displaystyle
1-\sum_{i\in\sigma^{(l)}}\xi_i
\right)^{|{\bf m}_l|+1}
},
\end{equation}
to (\ref{TheIntegral}), and expanding the term

\begin{equation}
\exp\left\{ \displaystyle\sum_{j\in\bar{\sigma}^{(0)}}(z_\sigma^{-A_\sigma^{-1}{\bf a}(j)}z_j)\xi_\sigma^{A_\sigma^{-1}{\bf a}(j)}\right\},
\end{equation}
\noindent
we obtain an expansion

\begin{align}
 f_{\s,0}(z)&=
\frac{
z_\sigma^{-A_\sigma^{-1}d}
}
{
(2\pi\ii)^{m_\s}
}
\displaystyle
\sum_{{\bf m}\in\Z^{\bar{\sigma}}_{\geq 0}}
\int_{\gamma} d\xi_\sigma\Big(\exp\left\{\displaystyle\sum_{i\in\sigma^{(0)}}\xi_i\right\}
\prod_{i\in\sigma^{(0)}}\xi_\sigma^{
{}^t{\bf e}_iA_\sigma^{-1}(d+A_{\bar{\sigma}}
{\bf m})
-1}
\Big)\times\nonumber\\
 & \quad
\displaystyle\prod_{l=1}^k
\Big(
(1-\sum_{i\in\sigma^{(l)}}\xi_i)^{-|{\bf m}_l|-1}
\prod_{i\in\sigma^{(l)}}\xi_\sigma^{
{}^t{\bf e}_iA_\sigma^{-1}(d+A_{\bar{\sigma}}
{\bf m})
-1}
\Big)
\frac{|{\bf m}_1|!\cdots|{\bf m}_k|!}{{\bf m}!}
(z_\sigma^{-A_\sigma^{-1}A_{\bar{\sigma}}}z_{\bar{\sigma}})^{{\bf m}}.
\end{align}

\noindent
As in \S\ref{SectionLaplace} and \S\ref{SectionResidue}, we are reduced to computing the integrals 
\begin{equation}
{\rm I}_0({\bf m})=\int_{\Gamma_{\sigma^{(0)},0}} \exp\left\{\displaystyle\sum_{i\in\sigma^{(0)}}\xi_i\right\}
\prod_{i\in\sigma^{(0)}}\xi_\sigma^{
-{}^t{\bf e}_iA_\sigma^{-1}(d+A_{\bar{\sigma}}
{\bf m})
-1}
d\xi_{\sigma^{(0)}}
\end{equation}
\noindent
and

\begin{equation}
{\rm I}_l({\bf m})=\int_{P_{\sigma^{(l)}}}
(1-\sum_{i\in\sigma^{(l)}}\xi_i)^{-|{\bf m}_l|-1}
\prod_{i\in\sigma^{(l)}}\xi_\sigma^{
{}^t{\bf e}_iA_\sigma^{-1}(d+A_{\bar{\sigma}}
{\bf m})
-1}d\xi_{\sigma^{(l)}}.
\end{equation}

\noindent
By \cref{lemma:lemma} and \cref{lemma:Beukers}, they can be computed as

\begin{equation}
{\rm I}_0({\bf m})=(2\pi\sqrt{-1})^{|\sigma^{(0)}|+1}\frac{
\Bigg(1-\exp\left\{-2\pi\sqrt{-1}\displaystyle\sum_{i\in\sigma^{(0)}}{}^t{\bf e}_iA_\sigma^{-1}(d+A_{\bar{\sigma}}
{\bf m})\right\}\Bigg)
}
{
\displaystyle\prod_{i\in\sigma^{(0)}}\Gamma(1-{}^t{\bf e}_iA_\sigma^{-1}(d+A_{\bar{\sigma}}
{\bf m}))
}
\end{equation}

\noindent
and

\begin{equation}
{\rm I}_l({\bf m})=\frac{\exp\left\{ -\pi\sqrt{-1}\displaystyle\sum_{i\in\sigma^{(l)}}{}^t{\bf e}_iA_\sigma^{-1}d\right\}}{
\Gamma(\displaystyle\sum_{i\in\sigma^{(l)}}{}^t{\bf e}_iA_\sigma^{-1}d)
}
\frac{
(2\pi\sqrt{-1})^{|\sigma^{(l)}|+1}
}
{
|{\bf m}_l|!
}
\frac{
1
}
{
\displaystyle\prod_{i\in\sigma^{(l)}}\Gamma(1-{}^t{\bf e}_iA_\sigma^{-1}(d+A_{\bar{\sigma}}
{\bf m}))
}.
\end{equation}

\noindent
Thus, we have

\begin{align}
f_{\s,0}(z) &=
\frac{
\exp\left\{ -\pi\sqrt{-1}\displaystyle\sum_{l=1}^k\displaystyle\sum_{i\in\sigma^{(l)}}{}^t{\bf e}_iA_\sigma^{-1}d\right\}
}
{
\displaystyle\prod_{l=1}^k\Gamma(\sum_{i\in\sigma^{(l)}}{}^t{\bf e}_iA_\sigma^{-1}d)
}
z_\sigma^{-A_\sigma^{-1}d}
\displaystyle
\sum_{{\bf m}\in\Z^{\bar{\sigma}}_{\geq 0}}\nonumber\\
 & \quad
\Bigg(1-\exp\left\{-2\pi\sqrt{-1}\displaystyle\sum_{i\in\sigma^{(0)}}{}^t{\bf e}_iA_\sigma^{-1}(d+A_{\bar{\sigma}}
{\bf m})\right\}\Bigg)
\frac
{
(z_\sigma^{-A_\sigma^{-1}A_{\bar{\sigma}}}z_{\bar{\sigma}})^{{\bf m}}
}
{
\Gamma({\bf 1}-A_\sigma^{-1}(d+A_{\bar{\sigma}}
{\bf m})){\bf m}!
}\\
 &=
\frac{
\exp\left\{ -\pi\sqrt{-1}\displaystyle\sum_{l=1}^k\displaystyle\sum_{i\in\sigma^{(l)}}{}^t{\bf e}_iA_\sigma^{-1}d\right\}
}
{
\displaystyle\prod_{l=1}^k\Gamma(\sum_{i\in\sigma^{(l)}}{}^t{\bf e}_iA_\sigma^{-1}d)
}\times\nonumber\\
&\quad
\displaystyle
\sum_{i=1}^r
\tiny
\Bigg(1-\exp\left\{-2\pi\sqrt{-1}\sum_{i\in\sigma^{(0)}}{}^t{\bf e}_iA_\sigma^{-1}(d+A_{\bar{\sigma}}
{\bf k}(i))\right\}\Bigg)
\normalsize
\varphi_{\sigma,{\bf k}(i)}.
\end{align}

\noindent
We denote $\Gamma_{\s,0}$ the integration cycle above. Just as in \S\ref{SectionLaplace}, \S\ref{SectionResidue}, and \S\ref{SectionEuler}, for any $\tilde{\bf k}\in\Z^{n+k}$, we consider a deck transformation $\Gamma_{\s,\tilde{\bf k}}$ of $\Gamma_{\s,0}$ associated to $\xi_\s\mapsto e^{2\pi\ii{}^t\tilde{\bf k}}\xi_\s$ and put 
\begin{equation}
f_{\s,\tilde{\bf k}}=\frac{1}{(2\pi\sqrt{-1})^{m_\s}}\int_{\Gamma_{\s,\tilde{\bf k}}}\frac{e^{h_{0,z^{(0)}}(x)}y^{\gamma -1}x^{c-1}}{\Big(1-y_1h_{1,z^{(1)}}(x)\Big)\cdots \Big(1-y_kh_{k,z^{(k)}}(x)\Big)}dydx.
\end{equation}
Computing as in \S\ref{SectionLaplace}, \S\ref{SectionResidue}, and \S\ref{SectionEuler}, we can obtain a mixed version of \cref{thm:fundamentalthm1} and \cref{thm:fundamentalthm2}.

\begin{thm}\label{thm:fundamentalthm4}
Take a regular triangulation $T$ of $A$ such that for any simplex $\s\in T$, one has $s_j\leq 1$ for any $j\in\barsigma$. Assume that $\Z A=\Z^{n+k}$, the parameter vector $d$ is very generic with respect to any $\sigma\in T$, and that for any $l=1,\dots,k$, one has $\displaystyle\sum_{i\in\sigma^{(l)}}{}^t{\bf e}_iA_\sigma^{-1}d\notin\Z_{\leq 0}$. Then, if one puts
\begin{equation}
f_{\sigma,\tilde{\bf k}(j)}(z)=\frac{1}{(2\pi\sqrt{-1})^{m_\s}}\int\frac{e^{h_{0,z^{(0)}}(x)}y^{\gamma -1}x^{c-1}}{(1-y_1h_{1,z^{(1)}}(x))\cdots (1-y_kh_{k,z^{(k)}}(x))}dydx,
\end{equation}
$\bigcup\{ f_{\sigma,\tilde{\bf k}(j)}(z)\}_{j=1}^{r(\sigma)}$ is a basis of solutions of $M_A(c)$ on the non-empty open set $U_T$, where $\{\tilde{\bf k}(j)\}_{j=1}^{r}$ is a complete system of representatives $\Z^{n+k}/\Z{}^tA_\sigma$. Moreover, for each $\sigma\in T,$ one has a transformation formula 
\begin{equation}
\begin{pmatrix}
f_{\sigma,\tilde{\bf k}(1)}(z)\\
\vdots\\
f_{\sigma,\tilde{\bf k}(r)}(z)
\end{pmatrix}
=
T_\sigma
\begin{pmatrix}
\varphi_{\sigma,v^{{\bf k}(1)}}(z)\\
\vdots\\
\varphi_{\sigma,v^{{\bf k}(r)}}(z)
\end{pmatrix}.
\end{equation}
Here, $T_\sigma$ is an $r\times r$ matrix given by 
\begin{align}
T_\sigma&=\frac{
\exp\left\{ -\pi\sqrt{-1}\displaystyle\sum_{l=1}^k\displaystyle\sum_{i\in\sigma^{(l)}}{}^t{\bf e}_iA_\sigma^{-1}d\right\}
}
{
\displaystyle
\prod_{l=1}^k
\Gamma\left(
\sum_{i\in\sigma^{(l)}}{}^t\tilde{\bf e}_iA_\sigma^{-1}d
\right)
}
\diag\Big( e^{2\pi\sqrt{-1}{}^t\tilde{\bf k}(i)A_\sigma^{-1}d}\Big)_{i=1}^{r}
\Big(e^{2\pi\sqrt{-1}{}^t\tilde{\bf k}(i)A_\sigma^{-1}{\bf k}(j)}\Big)_{i,j=1}^{r}\times\nonumber\\
 & \quad \diag\Bigg(1-\exp\left\{-2\pi\sqrt{-1}\displaystyle\sum_{i\in\sigma^{(0)}}{}^t{\bf e}_iA_\sigma^{-1}(d+A_{\bar{\sigma}}{\bf k}(i))\right\}\Bigg)_{i=1}^{r}.
\end{align}
\end{thm}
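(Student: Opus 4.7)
The plan is to assemble the detailed computation carried out just before the theorem statement into a clean argument that mirrors the proofs of \cref{thm:fundamentalthm1} and \cref{thm:fundamentalthm2}. Since the construction of $\Gamma_{\sigma,0}$ as the pull-back $p^*(\Gamma_{\sigma^{(0)},0}\times P_{\sigma^{(1)}}\times\cdots\times P_{\sigma^{(k)}})$ and the explicit evaluation of $f_{\sigma,0}(z)$ have essentially been executed in the preceding paragraphs, the proof itself amounts to three consolidation steps.

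First, I would justify convergence of the integral representation for $z\in U_\sigma$. The key input is \cref{lem:sum2} combined with the assumption $s_j\le 1$ for $j\in\bar\sigma$: using the plane-wave decomposition in the $\xi_{\sigma^{(0)}}$-direction, the exponent of $\rho$ in each factor arising from $\bar\sigma^{(l)}$ is $s_j-1\le 0$ (for $l\ge 1$) or $s_j\le 1$ (for $l=0$), so the Hankel-times-Pochhammer contour picks up only absolutely convergent terms, and term-by-term integration is legal. The evaluations via \cref{lemma:lemma} and \cref{lemma:Beukers} then reproduce the identity
\begin{equation*}
f_{\sigma,0}(z)=\frac{e^{-\pi\ii\sum_{l=1}^k\sum_{i\in\sigma^{(l)}}{}^t{\bf e}_iA_\sigma^{-1}d}}{\prod_{l=1}^k\Gamma\bigl(\sum_{i\in\sigma^{(l)}}{}^t{\bf e}_iA_\sigma^{-1}d\bigr)}\sum_{i=1}^r\bigl(1-e^{-2\pi\ii\sum_{i\in\sigma^{(0)}}{}^t{\bf e}_iA_\sigma^{-1}(d+A_{\bar\sigma}{\bf k}(i))}\bigr)\varphi_{\sigma,{\bf k}(i)}(z),
\end{equation*}
which is the first column of the required transformation matrix.

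Second, I would handle the deck transformations $\Gamma_{\sigma,\tilde{\bf k}}$ associated to $\xi_\sigma\mapsto e^{2\pi\ii\tilde{\bf k}}\xi_\sigma$ with $\tilde{\bf k}\in\Z^{n+k}$. The substitution affects the integrand only through the overall character $e^{2\pi\ii{}^t\tilde{\bf k}A_\sigma^{-1}d}$ and, upon expanding the denominator factors and the exponential, through the additional character $e^{2\pi\ii{}^t\tilde{\bf k}A_\sigma^{-1}A_{\bar\sigma}{\bf m}}$ accompanying the monomial $(z_\sigma^{-A_\sigma^{-1}A_{\bar\sigma}}z_{\bar\sigma})^{\bf m}$. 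Collecting these factors yields the stated form of $T_\sigma$.

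Third, I would address the basis claim. Given the assumption that $d$ is very generic with respect to every $\sigma\in T$, \cref{prop:independence} ensures that $\{\varphi_{\sigma,{\bf k}(i)}\}_{i=1}^r$ is linearly independent on $U_\sigma$. The hypothesis $\sum_{i\in\sigma^{(l)}}{}^t{\bf e}_iA_\sigma^{-1}d\notin\Z_{\le 0}$ for each $l\ge 1$ ensures the diagonal prefactor $1/\prod_l\Gamma(\cdot)$ is nonzero, while very-genericity of $d$ forces each factor $1-e^{-2\pi\ii\sum_{i\in\sigma^{(0)}}{}^t{\bf e}_iA_\sigma^{-1}(d+A_{\bar\sigma}{\bf k}(i))}$ to be nonzero (otherwise the relevant component of $A_\sigma^{-1}(c+A_{\bar\sigma}{\bf m})$ would be integral). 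Thus $T_\sigma$ factors as a product of a diagonal character matrix, a character-table matrix, and an invertible diagonal matrix. Invertibility of the middle factor is exactly the orthogonality-of-characters argument: \cref{lem:pairing} applied to $\Z^{n+k}/\Z{}^tA_\sigma\simeq\widehat{\Z^{n+k}/\Z A_\sigma}$ (using $\Z A=\Z^{n+k}$) guarantees that any complete system of representatives $\{\tilde{\bf k}(j)\}_{j=1}^r$ produces a unitary (hence invertible) character matrix. Summing the local ranks $r(\sigma)=\vol_\Z(\sigma)$ over $\sigma\in T$ gives $\vol_\Z(\Delta_A)=\rank M_A(d)$, so the union $\bigcup_{\sigma\in T}\{f_{\sigma,\tilde{\bf k}(j)}\}_{j=1}^{r}$ is a basis of solutions on $U_T$.

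The only genuinely delicate step is the convergence argument in the first paragraph: the $\rho$-integration along the Hankel contour must be reconciled with the compact Pochhammer pieces via the Thom–Mather trivialization device used in \S\ref{SectionLaplace}, because the Pochhammer factors now depend on the $\rho$-variable through the factors $(1-\sum_{i\in\sigma^{(l)}}\xi_i-\sum_{j\in\bar\sigma^{(l)}}(\cdots)\rho^{s_j}\cdots)^{-1}$. I expect this to be the main obstacle and would handle it by isotopying the Hankel contour so that $\rho$ restricts to a trivial fiber bundle over it, then transporting the product Pochhammer cycle along the base, exactly as in the construction of $\Gamma_{\sigma,0}$ in \S\ref{SectionLaplace}. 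All remaining steps are then direct book-keeping of characters and $\Gamma$-factors.
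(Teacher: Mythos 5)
Your proposal matches the paper's own (implicit) proof of \cref{thm:fundamentalthm4}: the theorem is stated at the end of the computation in \S 6, and the argument is exactly the assembly you describe — convergence via \cref{lem:sum2} and the degree bounds, evaluation via \cref{lemma:lemma} and \cref{lemma:Beukers}, deck transformations as character twists, and invertibility via \cref{lem:pairing}. You also correctly flag the same Thom--Mather regularization from \S\ref{SectionLaplace} as the technical point needed to make sense of the non-compact cycle $\Gamma_{\sigma^{(0)},0}\times P_{\sigma^{(1)}}\times\cdots\times P_{\sigma^{(k)}}$.

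However, your justification for the nonvanishing of the last diagonal factor is incorrect. You assert that very-genericity of $d$ ``forces each factor $1-e^{-2\pi\ii\sum_{i\in\sigma^{(0)}}{}^t{\bf e}_iA_\sigma^{-1}(d+A_{\bar\sigma}{\bf k}(i))}$ to be nonzero (otherwise the relevant component of $A_\sigma^{-1}(c+A_{\bar\sigma}{\bf m})$ would be integral).'' This does not follow: the quantity in the exponent is a \emph{sum} of entries of $A_\sigma^{-1}(d+A_{\bar\sigma}{\bf k}(i))$ over $i\in\sigma^{(0)}$, and very-genericity only controls the individual entries. A sum of non-integers can be an integer (e.g.\ $\tfrac12+\tfrac12$), so if $|\sigma^{(0)}|\ge 2$ very-genericity alone does not prevent the diagonal factor from vanishing. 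Indeed, the analogous results in the paper — \cref{thm:fundamentalthm1} (with $|A_\sigma^{-1}(c+A_{\bar\sigma}{\bf k}(i))|\notin\Z$) and \cref{thm:fundamentalthm5} (with $\sum_{i\in\sigma^{(0)}}{}^t{\bf e}_iA_\sigma^{-1}(d+A_{\bar\sigma}{\bf k}(i))\notin\Z$) — each include an explicit hypothesis to rule this out, and that hypothesis is what you should invoke here. As stated, \cref{thm:fundamentalthm4} in the paper appears to omit it, so you should recognize the gap rather than paper over it with a claim that does not hold.
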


The computations for (\ref{MixedInt}) is carried out in a similar way. Let us put
\begin{equation}
f_{\s,0}(z)=\frac{1}{(2\pi\ii)^{m_\s}}\int_\Gamma h_{1,z^{(1)}}(x)^{-\gamma_1}\cdots h_{k,z^{(k)}}(x)^{-\gamma_k}x^{c-1} e^{h_{0,z^{(0)}}(x)}dx,
\end{equation}
where we specify $\Gamma$ later and by abuse of notation, $m_\s$ denotes an integer
\begin{equation}
m_\s=
\begin{cases}
n+k&(\s^{(0)}=\varnothing)\\
n+k+1&(\s^{(0)}\neq\varnothing).
\end{cases}
\end{equation}
As in \S\ref{SectionEuler}, the index set $\{1,\dots,N\}$ naturally splits into several blocks as $\{1,\dots,N\}=I_0\cup\cdots\cup I_k$. Take any $n+k$ simplex $\s\subset\{1,\dots,N\}$ such that $|A_\s^{-1}{\bf a}(j)|\leq 1\;(j\in\bs)$. We put $\s^{(l)}=\s\cap I_l\;(l=0,\dots,k)$. Fix a distinguished element $i^{(l)}\in\s^{(l)}\;(l=1,\dots, k)$ and put $\s_0=\{ i^{(l)}\}_{l=1}^k$ and $\tau^{(l)}=\s^{(l)}\setminus\{ i^{(l)}\}\;(l=1,\dots,k)$. Finally, we put $\tau=\tau^{(1)}\cup\dots\cup\tau^{(k)}$Introducing a new coordinate $w_j=z_{i^{(l)}}^{-1}z_j\;(l=1,\dots,k,\; j\in\s^{(l)})$ we rewrite $f$ into a convenient form:
\begin{equation}
f_{\s,0}(z)=\frac{z_{\s_0}^{-\gamma}}{(2\pi\ii)^{m_\s}}\int_\Gamma\prod_{l=1}^k\left( 1+\sum_{j\in I_l\setminus\{ i^{(l)}\}}w_jx^{\ca(j)-\ca(i^{(l)})}\right)^{-\gamma_l}x^{c-\cA_{\s_0}\gamma-1} e^{h_{0,z^{(0)}}(x)}dx. 
\end{equation}
This corresponds to the identity $M_A(d)=M_{Q_0A}(Q_0d)$ where $Q_0$ is given by
\begin{equation}
Q_0=
\left(
\begin{array}{c|c}
I_k&O\\
\hline
-\cA_{\s_0}&I_n
\end{array}
\right).
\end{equation}
If we introduce a stair matrix

\begin{equation}
S=(\overbrace{{\bf 0}\mid\cdots\mid{\bf 0}}^{|\s^{(0)}|\text{ times}}\mid\overbrace{{\bf e_1}\mid\cdots\mid{\bf e}_1}^{|\tau^{(1)}|\text{ times}}
\mid\cdots\mid
\overbrace{{\bf e_k}\mid\cdots\mid{\bf e}_k}^{|\tau^{(k)}|\text{ times}}
)\in\Z^{k\times(\s^{(0)},\tau)},
\end{equation}
it is straightforward to check the formula
\begin{equation}
Q_0A_\s=
\left(
\begin{array}{c|c}
I_{\s_0}&S\\
\hline
O_{\s_0}&\cA_{\s^{(0)},\tau}-\cA_{\s_0}S
\end{array}
\right).
\end{equation}
We introduce a covering coordinate defined by 
\begin{numcases}{}
\xi_i=z_ix^{\ca(i)}&($i\in\s^{(0)}$)\\
\xi_i=e^{-\pi\ii}w_ix^{\ca(i)-\ca(i^{(l)})}&($i\in\tau^{(l)},\;l=1,\dots,k$).
\end{numcases}
This is abbreviated as 
\begin{equation}
p:\T^n_x\ni x\mapsto\xi_{\s^{(0)},\tau}=(z_{\s^{(0)}}x^{\cA_{\s^{(0)}}},e^{-\pi\ii{}^t{\bf 1}_\tau}w_\tau x^{\cA_\tau})\in\T^{\s^{(0)},\tau}.
\end{equation}
Suppose that $\Gamma$ is a pull-back $\Gamma=p^*\gamma$. By a direct computation, we have
\begin{align}
  &f_{\s,0}(z)\\
=&\frac{z_{\s_0}^{-\gamma}(z_{\s^{(0)}},e^{-\pi\ii{}^t{\bf 1}_\tau}w_\tau )^{-(\cA_{\s^{(0)},\tau}-\cA_{\s_0}S)^{-1}c}}{(2\pi\ii)^{m_\s}}\int_\gamma\prod_{l=1}^k\left( 1-\sum_{i\in\tau^{(l)}}\xi_i+\right.\nonumber\\
 &\left.\sum_{j\in\bs^{(l)}}(z_{\s^{(0)}},e^{-\pi\ii{}^t{\bf 1}_\tau}w_\tau )^{-(\cA_{\s^{(0)},\tau}-\cA_{\s_0}S)^{-1}(\ca(j)-\ca(i^{(l)}))}w_j\xi^{(\cA_{\s^{(0)},\tau}-\cA_{\s_0}S)^{-1}(\ca(j)-\ca(i^{(l)}))}\right)^{-\gamma_l}\nonumber\\
 &\exp\left\{   \sum_{i\in\s^{(0)}}\xi_i+\sum_{j\in\bs^{(0)}}(z_{\s^{(0)}},e^{-\pi\ii{}^t{\bf 1}_\tau}w_\tau )^{-(\cA_{\s^{(0)},\tau}-\cA_{\s_0}S)^{-1}\ca(j)}w_j\xi^{(\cA_{\s^{(0)},\tau}-\cA_{\s_0}S)^{-1}\ca(j)}\right\}\nonumber\\
 &\xi_{\s^{(0)},\tau}^{(\cA_{\s^{(0)},\tau}-\cA_{\s_0}S)^{-1}c-{\bf 1}}d\xi_{\s^{(0)},\tau}
\end{align}
Now we are going to integrate it over the cycle
\begin{equation}
\gamma=\Gamma_{\s^{(0)}}\times P_{\tau^{(1)}}\times\dots\times P_{\tau^{(k)}}.
\end{equation}
In order to ensure the convergence, we need the following
\begin{lem}
For any $l=1,\cdots,k$ and for any $j\in\barsigma^{(l)}$, one has
\begin{numcases}{}
\sum_{i\in\sigma^{(0)}}{}^t{\bf e}_i(\cA_{\s^{(0)},\tau}-\cA_{\s_0}S)^{-1}\ca(j)\leq 1& ($j\in\bs^{(0)}$) \\
\sum_{i\in\sigma^{(0)}}{}^t{\bf e}_i(\cA_{\s^{(0)},\tau}-\cA_{\s_0}S)^{-1}\ca(j)\leq 0 &
($j\in\bs^{(l)}$,\;l=1,\dots,k). 
\end{numcases}
\end{lem}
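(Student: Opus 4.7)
The plan is to reduce both inequalities to the identity $|A_\s^{-1}{\bf a}(j)|=s_j$ combined with the assumption $s_j\leq 1$, by imitating the proofs of \cref{lem:sum} and \cref{lem:sum2}. The two ingredients are (i) a block inversion of $A_\s$ adapted to the Cayley trick, and (ii) the partition-of-unity property of the first $k$ rows of the mixed matrix $A$.

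First, I would record the block form of $A_\s^{-1}$. Multiplying $A_\s$ by $Q_0 = \left(\begin{smallmatrix}I_k & 0 \\ -\cA_{\s_0} & I_n\end{smallmatrix}\right)$ on the left and reordering its columns so that $\s_0$ precedes $\s^{(0)}\cup\tau$ brings $A_\s$ into the upper block-triangular form
\[
Q_0 A_\s \;=\; \begin{pmatrix} I_k & S \\ 0 & M \end{pmatrix}, \qquad M \;=\; \cA_{\s^{(0)},\tau}-\cA_{\s_0}S,
\]
exactly as in the proof of \cref{lem:matrix}. Inverting and then multiplying by $Q_0$ on the right identifies the $\s^{(0)}$-rows of $A_\s^{-1}$ with $\bigl(-M^{-1}\cA_{\s_0}\bigm|M^{-1}\bigr)_{\s^{(0)}}$. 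Pairing with ${\bf a}(j)=\tbinom{e_j}{\ca(j)}$, where $e_j=0$ for $j\in\bs^{(0)}$ and $e_j={\bf e}_l$ for $j\in\bs^{(l)}$ with $l\ge 1$, yields the key identity
\[
\sum_{i\in\s^{(0)}}\transp{{\bf e}_i}A_\s^{-1}{\bf a}(j) \;=\; \sum_{i\in\s^{(0)}}\transp{{\bf e}_i}M^{-1}\bigl(\ca(j)-\cA_{\s_0}e_j\bigr).
\]

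Second, I would invoke the row-sum identity. Since the first $k$ rows of the mixed $A$ carry the same partition-of-unity structure that drove \cref{lem:sum}, the argument of \cref{lem:sum2} gives verbatim that for every $l=1,\dots,k$,
\[
\sum_{i\in\s^{(l)}}\transp{{\bf e}_i}A_\s^{-1}{\bf a}(j) \;=\; \begin{cases} 1 & j\in\bs^{(l)}, \\ 0 & j\in\bs^{(m)},\ m\ne l.\end{cases}
\]
Summing the contributions over all $l\in\{0,1,\dots,k\}$ reproduces $|A_\s^{-1}{\bf a}(j)|=s_j$, and subtracting the contributions from $l\ge 1$ produces
\[
\sum_{i\in\s^{(0)}}\transp{{\bf e}_i}A_\s^{-1}{\bf a}(j) \;=\; \begin{cases} s_j & (j\in\bs^{(0)}), \\ s_j-1 & (j\in\bs^{(l)},\ l\ge 1). \end{cases}
\]
The hypothesis $s_j\le 1$ then bounds these by $1$ and $0$ respectively.

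Third, I would combine the two steps to obtain the lemma as stated. For $j\in\bs^{(0)}$ the correction $\cA_{\s_0}e_j$ in the step-one identity vanishes, so $\sum_{i\in\s^{(0)}}\transp{{\bf e}_i}M^{-1}\ca(j)$ equals $\sum_{i\in\s^{(0)}}\transp{{\bf e}_i}A_\s^{-1}{\bf a}(j)\le 1$, which is the first inequality verbatim. For $j\in\bs^{(l)}$ with $l\ge 1$, step one reads $\sum_{i\in\s^{(0)}}\transp{{\bf e}_i}M^{-1}\bigl(\ca(j)-\ca(i^{(l)})\bigr)=s_j-1\le 0$. To align this with the form $\sum_{i\in\s^{(0)}}\transp{{\bf e}_i}M^{-1}\ca(j)$ written in the lemma, one reads the $l$-th factor of the integrand of \eqref{TheIntegral2} through the change of variables used in the proof of \cref{thm:fundamentalthm3} (Case 3): the prefactor $z_{\s_0}^{-\gamma}$ pulled out before applying the covering transform absorbs precisely the $\ca(i^{(l)})$ contribution, so the residual exponent of $\xi_{\s^{(0)}}$ that one needs to control in the $\rho\to\infty$ limit along the Hankel contour is $\sum_{i\in\s^{(0)}}\transp{{\bf e}_i}M^{-1}\ca(j)$ as written. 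The main obstacle in this last step is bookkeeping: one has to check that the translation between the two forms of the exponent does not hide a missing shift, so that the bound $s_j-1\le 0$ translates directly into the stated inequality $\sum_{i\in\s^{(0)}}\transp{{\bf e}_i}M^{-1}\ca(j)\le 0$ under the conventions of \eqref{TheIntegral2}.
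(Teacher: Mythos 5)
Your proposal is correct and follows essentially the same route as the paper: both rest on the block inversion $Q_0A_\s=\bigl(\begin{smallmatrix}I_k&S\\O&M\end{smallmatrix}\bigr)$ with $M=\cA_{\s^{(0)},\tau}-\cA_{\s_0}S$, the identity $\sum_{i\in\s^{(0)}}\transp{{\bf e}_i}A_\s^{-1}{\bf a}(j)=s_j$ (resp. $s_j-1$), and the hypothesis $s_j\leq 1$; whether one extracts the cancellation of the $\tau$-contributions directly from the block form (as the paper does) or via the row-sum identity of \cref{lem:sum2} (as you do) is only a difference of bookkeeping. You are also right to flag the $\ca(i^{(l)})$-shift in the second case — the quantity actually controlling convergence is $M^{-1}\bigl(\ca(j)-\ca(i^{(l)})\bigr)$, a point the paper's own proof silently elides by writing $Q_0{\bf a}(j)$ without the shift.
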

\begin{proof}
As for the first case, let us note that
\begin{equation}
A_\s^{-1}{\bf a}(j)=
\left(
\begin{array}{c|c}
I_{\s_0}&-S(\cA_{\s^{(0)},\tau}-\cA_{\s_0}S)^{-1}\\
\hline
O&(\cA_{\s^{(0)},\tau}-\cA_{\s_0}S)^{-1}
\end{array}
\right)
\begin{pmatrix}
O\\
\hline
\ca(j)
\end{pmatrix}
=
\begin{pmatrix}
-S(\cA_{\s^{(0)},\tau}-\cA_{\s_0}S)^{-1}\ca(j)\\
\hline
(\cA_{\s^{(0)},\tau}-\cA_{\s_0}S)^{-1}\ca(j)
\end{pmatrix}.
\end{equation}
By the definition of the matrix $S$, we have
\begin{equation}
|S(\cA_{\s^{(0)},\tau}-\cA_{\s_0}S)^{-1}\ca(j)|=\sum_{l=1}^k\sum_{i\in\tau^{(l)}}{}^t{\bf e}_i(\cA_{\s^{(0)},\tau}-\cA_{\s_0}S)^{-1}\ca(j).
\end{equation}
On the other hand, we have
\begin{equation}
|(\cA_{\s^{(0)},\tau}-\cA_{\s_0}S)^{-1}\ca(j)|=\sum_{i\in\s^{(0)}}{}^t{\bf e}_i(\cA_{\s^{(0)},\tau}-\cA_{\s_0}S)^{-1}\ca(j)+\sum_{l=1}^k\sum_{i\in\tau^{(l)}}{}^t{\bf e}_i(\cA_{\s^{(0)},\tau}-\cA_{\s_0}S)^{-1}\ca(j).
\end{equation}
Combining these, we obtain an identity
\begin{equation}
|A_\s^{-1}{\bf a}(j)|=\sum_{i\in\s^{(0)}}{}^t{\bf e}_i(\cA_{\s^{(0)},\tau}-\cA_{\s_0}S)^{-1}\ca(j),
\end{equation}
which proves the first inequality since we assumed $|A_\s^{-1}{\bf a}(j)|\leq 1$. As for the second one, we should be careful that
\begin{equation}
A_\s^{-1}{\bf a}(j)=
\left(
\begin{array}{c|c}
I_{\s_0}&-S(\cA_{\s^{(0)},\tau}-\cA_{\s_0}S)^{-1}\\
\hline
O&(\cA_{\s^{(0)},\tau}-\cA_{\s_0}S)^{-1}
\end{array}
\right)
\begin{pmatrix}
{\bf e}_l\\
\hline
\ca(j)
\end{pmatrix}
=
\begin{pmatrix}
{\bf e}_l-S(\cA_{\s^{(0)},\tau}-\cA_{\s_0}S)^{-1}\ca(j)\\
\hline
(\cA_{\s^{(0)},\tau}-\cA_{\s_0}S)^{-1}\ca(j)
\end{pmatrix}.
\end{equation}
Repeating the same argument as the first case, we obtain the desired inequality.
\end{proof}

\noindent
Thus, we can confirm that the integral is convergent if $z\in U_\s$. Expanding the integrand, we obtain the basic formula
\begin{equation}
f_{\s,0}(z)=\frac{\displaystyle\prod_{l:\tau^(l)\neq\varnothing}e^{-\pi\ii(1-\gamma_l)}\displaystyle\prod_{l:\tau^(l)=\varnothing}e^{-\pi\ii\gamma_l}}{\Gamma(\gamma_1)\dots\Gamma(\gamma_k)\displaystyle\prod_{l:\tau^(l)=\varnothing}(1-e^{-2\pi\ii\gamma_l})}\sum_{i=1}^r\left( 1-\exp\left\{ -2\pi\ii\sum_{i\in\s^{(0)}}{}^t{\bf e}_iA_\s^{-1}(d+A_{\bs}{\bf k}(i))\right\}\right)\varphi_{\s,{\bf k}(i)},
\end{equation}
where $\{ A_{\bs}{\bf k}(i)\}_{i=1}^r=\Z^{(n+k)\times 1}/\Z A_\s$. As before, we denote the integration cycle above $\Gamma_{\s,\s_0, 0}$. For any $\tilde{\bf k}\in\Z^{n}$, we consider a deck transformation $\Gamma_{\s,\s_0,\tilde{\bf k}}$ of $\Gamma_{\s,0}$ associated to $\xi_\tau\mapsto e^{2\pi\ii{}^t\tilde{\bf k}}\xi_\tau$ and put 
\begin{equation}
f_{\s,\tilde{\bf k}}(z)=\frac{1}{(2\pi\ii)^{m_\s}}\int_{\Gamma_{\s,\s_0,\tilde{\bf k}}} h_{1,z^{(1)}}(x)^{-\gamma_1}\cdots h_{k,z^{(k)}}(x)^{-\gamma_k}x^{c-1} e^{h_{0,z^{(0)}}(x)}dx.
\end{equation}

\noindent
Computing as in \S\ref{SectionEuler}, we obtain the formula

\begin{align}
f_{\s,\tilde{\bf k}}(z)=&\frac{\displaystyle\prod_{l:\tau^(l)\neq\varnothing}e^{-\pi\ii(1-\gamma_l)}\displaystyle\prod_{l:\tau^(l)=\varnothing}e^{-\pi\ii\gamma_l}}{\Gamma(\gamma_1)\dots\Gamma(\gamma_k)\displaystyle\prod_{l:\tau^(l)=\varnothing}(1-e^{-2\pi\ii\gamma_l})}
\exp\{2\pi\ii
\transp{
\begin{pmatrix}
O\\
\tilde{\bf k}
\end{pmatrix}
}
A_\s^{-1}
d
\}
\sum_{i=1}^r\exp\{2\pi\ii
\transp{
\begin{pmatrix}
O\\
\tilde{\bf k}
\end{pmatrix}
}
A_\s^{-1}
{\bf k}(i)
\}\times\nonumber\\
 &\left( 1-\exp\left\{ -2\pi\ii\sum_{i\in\s^{(0)}}{}^t{\bf e}_iA_\s^{-1}(d+A_{\bs}{\bf k}(i))\right\}\right)\varphi_{\s,{\bf k}(i)}.
\end{align}

\begin{thm}\label{thm:fundamentalthm5}
Take a regular triangulation $T$ of $A$ such that $s_j\leq 1$ for any $j\in\barsigma$. Assume that $\Z A=\Z^{n+k}$, the parameter vector $d$ is very generic with respect to any $\sigma\in T$, and that for any $i=1,\dots,r$, one has $\displaystyle\sum_{i\in\sigma^{(0)}}{}^t{\bf e}_iA_\sigma^{-1}(d+A_{\bs}{\bf k}(i))\notin\Z$. Then, if one puts
\begin{equation}
f_{\sigma,\tilde{\bf k}(j)}(z)=\frac{1}{(2\pi\ii)^{m_\s}}\int_{\Gamma_{\s,\s_0,\tilde{\bf k}(j)}} h_{1,z^{(1)}}(x)^{-\gamma_1}\cdots h_{k,z^{(k)}}(x)^{-\gamma_k}x^{c-1} e^{h_{0,z^{(0)}}(x)}dx,
\end{equation}
$\bigcup\{ f_{\sigma,\tilde{\bf k}(j)}(z)\}_{j=1}^{r}$ is a basis of solutions of $M_A(d)$ on the non-empty open set $U_T$, where $\{\tilde{\bf k}(j)\}_{j=1}^{r}$ is a complete system of representatives $\Z^{n}/\Z{}^t(\cA_\tau-\cA_{\s_0}S)$. Moreover, for each $\sigma\in T,$ one has a transformation formula 
\begin{equation}
\begin{pmatrix}
f_{\sigma,\tilde{\bf k}(1)}(z)\\
\vdots\\
f_{\sigma,\tilde{\bf k}(r)}(z)
\end{pmatrix}
=
T_\sigma
\begin{pmatrix}
\varphi_{\sigma,v^{{\bf k}(1)}}(z)\\
\vdots\\
\varphi_{\sigma,v^{{\bf k}(r)}}(z)
\end{pmatrix}.
\end{equation}
Here, $T_\sigma$ is an invertible $r\times r$ matrix given by 
\begin{align}
T_\sigma&=\frac{\displaystyle\prod_{l:\tau^{(l)}\neq\varnothing}e^{-\pi\ii(1-\gamma_l)}\displaystyle\prod_{l:\tau^{(l)}=\varnothing}e^{-\pi\ii\gamma_l}}{\Gamma(\gamma_1)\dots\Gamma(\gamma_k)\displaystyle\prod_{l:\tau^{(l)}=\varnothing}(1-e^{-2\pi\ii\gamma_l})}
\diag\Big( \exp\{2\pi\ii
\transp{
\begin{pmatrix}
O\\
\tilde{\bf k}(i)
\end{pmatrix}
}
A_\s^{-1}
d
\}\Big)_{i=1}^{r}
\times\nonumber\\
 & \quad\Big(\exp\{2\pi\ii
\transp{
\begin{pmatrix}
O\\
\tilde{\bf k}(i)
\end{pmatrix}
}
A_\s^{-1}
{\bf k}(j)
\}\Big)_{i,j=1}^{r}
\diag\Bigg(1-\exp\left\{-2\pi\sqrt{-1}\displaystyle\sum_{i\in\sigma^{(0)}}{}^t{\bf e}_iA_\sigma^{-1}(d+A_{\bar{\sigma}}{\bf k}(i))\right\}\Bigg)_{i=1}^{r}.\nonumber
\end{align}
\end{thm}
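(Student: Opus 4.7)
The derivation immediately preceding the theorem statement already expresses $f_{\s,\tilde{\bf k}}(z)$ as an explicit $\C$-linear combination of the convergent $\Gamma$-series $\varphi_{\s,{\bf k}(j)}(z)$, and the coefficients read off from that computation are exactly the entries of the matrix $T_\s$ in the statement. So the proof reduces to two tasks: (a) verify that $T_\s$ is invertible for every $\s\in T$, and (b) assemble, over all $\s\in T$, enough independent solutions to span the space of holomorphic solutions on $U_T$.

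For (a) I would factor $T_\s = D_1 \cdot C \cdot D_2$, where $D_1$ and $D_2$ are the two diagonal matrices and $C$ is the character-type matrix. The prefactor $\prod_{l:\tau^{(l)}\neq\varnothing}e^{-\pi\ii(1-\gamma_l)}\prod_{l:\tau^{(l)}=\varnothing}e^{-\pi\ii\gamma_l}/\bigl(\Gamma(\gamma_1)\cdots\Gamma(\gamma_k)\prod_{l:\tau^{(l)}=\varnothing}(1-e^{-2\pi\ii\gamma_l})\bigr)$ is a nonzero scalar: the denominator is nonzero since the assumption $\gamma_l\notin\Z_{\leq 0}$ kills the $\Gamma$-poles, and the factors $1-e^{-2\pi\ii\gamma_l}$ for $\tau^{(l)}=\varnothing$ are nonzero because very-genericity of $d$ with respect to $\s$ forces $\gamma_l\notin\Z$ in that case. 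The diagonal $D_1$ of exponentials is trivially invertible, and $D_2$ is invertible precisely because of the hypothesis $\sum_{i\in\s^{(0)}}{}^t{\bf e}_iA_\s^{-1}(d+A_{\bs}{\bf k}(i))\notin\Z$ for all $i$.

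The substantive step is the invertibility of the middle factor $C=\bigl(\exp\{2\pi\ii\,{}^t(0,\tilde{\bf k}(i))A_\s^{-1}{\bf k}(j)\}\bigr)_{i,j}$. Exactly as in Case 3 of \S\ref{SectionEuler}, the identity $M_A(d)=M_{Q_0A}(Q_0d)$ with $Q_0 = \left(\begin{smallmatrix} I_k & O \\ -\cA_{\s_0} & I_n\end{smallmatrix}\right)$ provides compatible isomorphisms of finite abelian groups
\begin{equation*}
\Z^{n+k}/\Z A_\s \;\overset{\sim}{\longrightarrow}\; \Z^n/\Z(\cA_\tau - \cA_{\s_0}S),
\qquad
\Z^n/\Z\,{}^t(\cA_\tau - \cA_{\s_0}S) \;\overset{\sim}{\longrightarrow}\; \Z^{n+k}/\Z\,{}^tA_\s,
\end{equation*}
where the second is given by $\tilde{\bf k}\mapsto\left(\begin{smallmatrix} 0_{\s_0}\\ \tilde{\bf k}\end{smallmatrix}\right)$, and these isomorphisms preserve the non-degenerate pairing $\langle v,w\rangle = {}^t v A_\s^{-1}w$ of \cref{lem:pairing}. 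Consequently $C$ is, up to the scalar $\sqrt{r}$, the character table of the finite abelian group $\Z^{n+k}/\Z A_\s$ paired against its dual, and by orthogonality of irreducible characters it is invertible (indeed unitary up to normalization).

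For (b), invertibility of $T_\s$ and \cref{prop:independence} imply that the $r=|\Z^{n+k}/\Z A_\s|=\vol_\Z(\s)$ functions $\{f_{\s,\tilde{\bf k}(j)}\}_{j=1}^r$ are linearly independent solutions of $M_A(d)$ on $U_\s$. Summing over $\s\in T$ gives $\sum_{\s\in T}\vol_\Z(\s)=\vol_\Z(\Delta_A)=\rank M_A(d)$ linearly independent solutions on $U_T\neq\varnothing$, which therefore form a basis. I expect the main obstacle to be a bookkeeping issue rather than a conceptual one, namely a careful check that the two isomorphisms displayed above indeed intertwine the bilinear pairing on $(\Z^{n+k})^*\times\Z^{n+k}$ with that on $(\Z^n)^*\times\Z^n$ after the block decomposition induced by $Q_0$; this is the same computation carried out explicitly for Case 2 of \S\ref{SectionEuler} (displayed identity $\,{}^t v A_\s^{-1} w = \pi(Pv)(\cA_\tau-\cA_{\s_0}S)^{-1}\pi(Q_0 w)\bmod \Z$) and the extension to the mixed setting with a nontrivial block $\s^{(0)}$ is direct once one observes that the exponential-type integration direction is encoded entirely in the first block of $Q_0A_\s$ and hence factors out orthogonally from the character sum.
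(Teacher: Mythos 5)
Your proposal is correct and follows exactly the route the paper intends: Theorem~\ref{thm:fundamentalthm5} is stated with no separate proof because the preceding display already gives the expansion of $f_{\s,\tilde{\bf k}}$ in terms of $\Gamma$-series, so the residual work is precisely the invertibility of $T_\s$ (via the scalar prefactor, the two diagonals, and the character matrix $C$, the last by \cref{lem:pairing} and the pairing-compatibility computation carried out in Case~2/Case~3 of \S\ref{SectionEuler}) together with the count $\sum_{\s\in T}\vol_\Z(\s)=\vol_\Z(\Delta_A)=\rank M_A(d)$, all of which you reproduce. Two minor remarks worth recording: (i) the hypothesis $\gamma_l\notin\Z_{\leq 0}$ is not explicitly listed in the theorem (unlike \cref{thm:fundamentalthm3}) but is indeed needed to make $1/\Gamma(\gamma_l)\ne 0$, so you are right to invoke it; (ii) your check that very-genericity forces $\gamma_l\notin\Z$ when $\tau^{(l)}=\varnothing$ is valid, because in that case the $i^{(l)}$-entry of $A_\s^{-1}d$ equals $\gamma_l$ exactly (the $l$-th row of the stair matrix $S$ is then zero), so the $\tau^{(l)}=\varnothing$ denominators are automatically controlled by the stated hypotheses.
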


\end{document}